\def\@settitle{\begin{center}%
  \baselineskip14\p@\relax
  \bfseries
  \uppercasenonmath\@title
  \@title
  \ifx\@subtitle\@empty\else
     \\[1ex]\uppercasenonmath\@subtitle
     \footnotesize\mdseries\@subtitle
  \fi
  \end{center}%
}
\def\subtitle#1{\gdef\@subtitle{#1}}
\def\@subtitle{}
\newtheorem{cor}{Corollary}
\newtheorem{thm}{Theorem}
\newtheorem{rem}{Remark}
\newtheorem{prop}{Proposition}[section]
\newtheorem{lem}{Lemma}
\newtheorem{defn}{Definition}
\newtheorem{prob}{Problem}
\newtheorem{conj}{Conjecture}
\DeclareMathOperator{\ker1}{Ker}
\DeclareMathOperator{\conv}{conv}
\DeclareMathOperator{\vol}{vol}
\DeclareMathOperator{\perim}{perim}
\DeclareMathOperator{\aff}{aff}
\DeclareMathOperator{\surf}{surf}
\DeclareMathOperator{\cirr}{cr}
\DeclareMathOperator{\ir}{ir}
\DeclareMathOperator{\proj}{proj}
\renewcommand{\Re}{\mathbb{R}}
\newcommand{\BB}{\mathbf{B}}
\newcommand{\Sph}{\mathbb{S}}
\renewcommand{\S}{\mathcal{S}}
\newcommand{\zl}{\color{black}}
\begin{document}

\title[Isoperimetric problems for zonotopes]{Isoperimetric problems for zonotopes}

\author[A. Jo\'os]{Antal Jo\'os}
\author[Z. L\'angi]{Zsolt L\'angi}

\address{Antal Jo\'os\\ Department of Mathematics, University of Duna\'ujv\'aros, T\'ancsics M. u. 1/a, Duna\'ujv\'aros, Hungary, 2400}
\email{joosa@uniduna.hu}
\address{Zsolt L\'angi\\ Department of Geometry, Budapest University of Technology, Egry J\'{o}zsef utca 1., Budapest, Hungary, 1111 and\\
MTA-BME Morphodynamics Research Group, M\H uegyetem rkp. 3., Budapest, Hungary, 1111}
\email{zlangi@math.bme.hu}

\thanks{Partially supported by the BME Water Sciences \& Disaster Prevention TKP2020 Institution Excellence Subprogram, grant no. TKP2020 BME-IKA-VIZ and
the NKFIH grant K134199.}

\subjclass[2010]{52B60, 52A40, 52A39}
\keywords{zonotope, parallelotope, rhombic dodecahedron, isoperimetric problem, intrinsic volume, polarization problem}

\begin{abstract}
Shephard (Canad. J. Math. 26: 302-321, 1974) proved a decomposition theorem for zonotopes yielding a simple formula for {\zl their volume}. In this note we prove a generalization of this theorem yielding similar formulas for {\zl their intrinsic volumes}. We use this result to investigate geometric extremum problems for zonotopes generated by a given number of segments. In particular, we solve isoperimetric problems for $d$-dimensional zonotopes generated by $d$ or $d+1$ segments, and give asymptotic estimates for the solutions of similar problems for zonotopes generated by sufficiently many segments. In addition, we present applications of our results to the $\ell_1$ polarization problem on the unit sphere and to a vector-valued Maclaurin inequality conjectured by Brazitikos and McIntyre in 2021.
\end{abstract}

\maketitle

\section{Introduction}\label{sec:intro}

An important family of convex polytopes in the $d$-dimensional Euclidean space $\Re^d$ is the class of so-called \emph{zonotopes}, polytopes obtained as Minkowski sums of finitely many segments. Zonotopes have been in the focus of research since the middle of the 20th century, earning a separate chapter in the famous book
\cite{Coxeter} of Coxeter in 1943 and a place in the problem collection \cite{Shephard_problems} of Shephard. They are connected to various branches of mathematics, as an example, we may mention the fact that their face lattices correspond to the combinatorial classes of central hyperplane arrangements in $\Re^{d+1}$ \cite{Grunbaum, winter}. They play a central role in the theory of projections of convex polytopes, both being the projections of affine cubes (see e.g. \cite{McMullen2}), and, by Cauchy's projection formula \cite{Gardner}, being the projection bodies of convex polytopes in $\Re^d$ (see also \cite{parallelohedra}). They are closely related to parallelohedra, the convex polytopes whose translates fill the space (see \cite{Erdahl, McMullen3}), and appear in lattice geometry (see e.g. \cite{Lenz, HM17, BJM19} or the survey \cite{survey}). Zonotopes are also investigated and often applied outside pure mathematics \cite{ABC05, BernEppstein, BEGHSW, GNZ}.

We note that there is a rich literature about isoperimetric type problems for zonotopes. From amongst these papers, we mention the paper \cite{Bezdek} of Bezdek comparing the inradius of a rhombic dodecahedron to its intrinsic volumes, \cite{Filliman} of Filliman comparing the volume of a zonotope to the total squared lengths of its generating vectors, and \cite{Deza} of Deza, Pournin and Sukegawa giving a sharp asymptotic estimate on the maximum diameter of a primitive zonotope. Recently, Fradelizi et al. \cite{FMMZ} investigated various volume inequalities for the Minkowski sums of zonoids. For open problems regarding zonotopes, the interested reader is referred also to \cite{Ivanov}.
% We remark that the results \cite{Linhart} of Linhart and \cite{BourgLind} of Bourgain and Lindenstrauss can be regarded as estimates for the ratio of the circumradius to the inradius of a zonotope, generated by a given number of segments. The latter problem was investigated in \cite{BLM} by Bourgain, Lindenstrauss and Milman in a more general form in which an arbitrary zonoid plays the role of the Euclidean ball.

{\zl An elegant, simple formula exists for the volume of a zonotope in terms of its generating segments; this formula is usually attributed to McMullen~\cite{McMullen} or Shephard \cite{Shephard} and follows e.g. from a decomposition theorem for zonotopes in \cite{Shephard}. In \cite{GoverKrikorian}, this formula was extended to zonotopes whose dimension is strictly less than that of the ambient space. Using an integral geometric approach, in a very recent paper Brazitikos and McIntyre \cite{BM} found a generalization of this formula for all intrinsic volumes of a zonotope.}

As a preliminary result, we prove a generalization of {\zl Shephard's decomposition} theorem {\zl reproving the formulas for the intrinsic volumes of a zonotope in \cite{BM}.} Our proof is presented in Section~\ref{sec:prelim}, where we also collect all the tools required in our investigation.
In the main part of the paper we prove isoperimetric inequalities for zonotopes. We focus on two types of zonotopes. In particular, we investigate zonotopes in $\Re^d$ generated by $d$ or $d+1$ segments, and also zonotopes in $\Re^d$ generated by $n \gg d$ segments. We collect our results for these two types of zonotopes in Sections~\ref{sec:rhombic} and \ref{sec:asymptotic}, respectively.

{\zl In the paper, we also point out} applications of our results to other problems in mathematics.
{\zl We focus on two problems, one of which is the so-called \emph{$\ell_p$-polarization problem} discussed e.g. in \cite{AN} (see also \cite{NR}), and the other one is a generalization of the well known Maclaurin inequality \cite{Biler} by Brazitikos and McIntyre \cite{BM}, comparing the values of elementary symmetric functions evaluated at positive real numbers. We introduce these problems and their relations to zonotopes in detail in Section~\ref{sec:prelim}, and describe our related results at their appropriate places in Sections~\ref{sec:rhombic} and \ref{sec:asymptotic}.}

Finally, in Section~\ref{sec:problems} we collect some remarks and open questions.

\section{Preliminaries}\label{sec:prelim}

In our investigation, we denote the origin of the space $\Re^n$ by $o$. We denote the closed unit ball centered at $o$ by $\BB^d$, and its boundary and volume by $\Sph^{d-1}$ and $\kappa_d$, respectively, where we set $\kappa_0 = 1$. For any points $p,q \in \Re^d$, we denote the closed segment with endpoints $p,q$ by $[p,q]$, and the Euclidean norm of $p$ by $|p|$. {\zl We call a compact, convex set with non-empty interior a \emph{convex body}.}

{\zl Let $K \subset \Re^d$ be a compact, convex set. By Steiner's formula, the volume of the convex body $K+ t \BB^d$ is a polynomial of $t$ of degree $d$. We write this polynomial in the form
\begin{equation}\label{eq:Steiner}
V_d(K+ t \BB^d) = \sum_{i=0}^d \kappa_{d-i} V_i(K) t^{d-i}.
\end{equation}
For $i=1,2,\ldots, n$, the quantity $V_i(K)$ is called the \emph{$i$th intrinsic volume of $K$} \cite{Gardner}.} We recall that if the dimension of $K$ is $m$ with $1 \leq m \leq d$, then for any $i > m$, $V_i(K)=0$, and $V_m(K)$ is equal to the $m$-dimensional volume (Lebesgue measure) of $K$. Finally, if $K \subset \Re^d$ is a convex body, we denote the surface area and the mean width of $K$ by $\surf(K)$ and w(K), respectively.

It is a well known fact that for any convex body $K \subset \Re^d$ there is a unique ball of minimal radius containing $K$. We call this ball the \emph{circumball} of $K$, its radius the \emph{circumradius} of $K$, which we denote by $\cirr(K)$. Similarly, we call a largest ball contained in $K$ an \emph{inball} of $K$, and denote its radius by $\ir(K)$. Finally, for any $v_1, v_2, \ldots, v_d \in \Re^d$, we denote the $d \times d$ matrix with the $v_i$ as column vectors by $[v_1, v_2, \ldots, v_n]$. We remark that since every zonotope $Z$ is centrally symmetric, the quantities $2\cirr(Z)$ and $2\ir(Z)$ are equal to the diameter and the minimum width of $Z$.

In this paper we distinguish zonotopes satisfying some additional property. More specifically, if the lengths of all generating vectors of a zonotope $Z$ are equal (resp., equal to $1$), we say that $Z$ is an \emph{equilateral} (resp., \emph{unit edge length}) zonotope. Note that up to translation, any zonotope $Z$ in $\Re^d$ can be defined as $Z= \sum_{i=1}^n [o,p_i]$ for some $p_1, p_2, \ldots, p_n \in \Re^d$. In this case we say that $Z= \sum_{i=1}^n [o,p_i]$ is given in a \emph{canonical form}. Clearly, the zonotope $Z= \sum_{i=1}^n [o,p_i]$ is symmetric to the origin $o$ if and only if $\sum_{i=1}^n p_i = o$; in this case we say that $Z$ has a \emph{centered canonical form}. We denote the family of zonotopes in $\Re^d$ generated by at most $n$ segments by $\mathcal{Z}_{d,n}$.

We note that the zonotope $Z=\sum_{i=1}^n [o,p_i]$, $p_i \in \Re^d$ is the translate of the zonotope $Z'= \frac{1}{2} \sum_{i=1}^n [-p_i,p_i]$. Thus, since $Z'$ is $o$-symmetric and every vertex of $Z'$ is of the form $\sum_{i=1}^n \varepsilon_i p_i$ with some $\varepsilon_i \in \{ -1,1\}$, we have that
\begin{equation}\label{eq:cirr}
\cirr(Z) = \frac{1}{2} \max \left\{  \left|\sum_{i=1}^n \varepsilon_i p_i \right| : \varepsilon_i \in \{ -1,1\}, i=1,2,\ldots, n \right\}.
\end{equation}
Recall that the support function of a convex body $K \subset \Re^d$ is defined as $h_K : \Sph^{d-1} \to \Re$, $h_K(u) = \sup \{ \langle x,u \rangle : x \in K \}$. Furthermore, recall that for any convex bodies $K,L \subset \Re^d$, we have $h_{K+L} = h_K + h_L$, and $h_{[-q,q]}(u) = | \langle q,u \rangle |$ for any $q \in \Re^n$.
As $\cirr(K) = \max \{ h_K(u) : u \in \Sph^{d-1} \}$ and $\ir(K) = \min \{ h_K(u) : u \in \Sph^{d-1} \}$, this yields that for $Z=\sum_{i=1}^n [o,p_i]$, $p_i \in \Re^d$,
\begin{equation}\label{eq:cirr_ir}
\cirr(Z) = \frac{1}{2} \max \left\{ \sum_{i=1}^n | \langle u,p_i \rangle  | : u \in \Sph^{d-1} \right\}, \hbox{ and } \ir(Z) = \frac{1}{2} \min \left\{ \sum_{i=1}^n | \langle u,p_i \rangle  | : u \in \Sph^{d-1} \right\} .
\end{equation}

%An elegant, simple formula exists for the volume of a zonotope in terms of its generating vectors; this formula is usually attributed to McMullen~\cite{McMullen} or Shephard \cite{Shephard} and follows e.g. from a decomposition theorem for zonotopes in \cite{Shephard}. In \cite{GoverKrikorian}, this formula was extended to zonotopes whose dimension is strictly less than that of the ambient space. Using an integral geometric approach, Brazitikos and McIntyre \cite{BM} found a generalization of this formula for all intrinsic volumes of a zonotope.
Our first goal is to prove a generalization of the decomposition theorem in \cite{Shephard}, which yields an elementary proof of the formula in \cite{BM}. To state this result, we need some preparation.

Let $d \geq 2$, and consider a zonotope $Z = \sum_{i=1}^n [o,p_i]$ with $p_i \in \Re^d$ for all values of $i$.
We set $\mathcal{P}^Z=\{ p_1, \ldots, p_n \}$, and for any $0 \leq k \leq d$, by $\mathcal{P}^Z_k$ the family of $k$-element subsets of $\mathcal{P}^Z$ containing linearly independent vectors.
Furthermore, for any $I \in \mathcal{P}^Z_k$ with $k \geq 1$, we set $P(I) = \sum_{i \in I} [o,p_i]$, and for later use, we extend this notation for the case $k=0$ by setting $P(\emptyset) = \{ o \}$, which we regard as a $0$-dimensional parallelotope. Note that for any $I = \{ i_1, i_2, \ldots, i_k \} \in \mathcal{P}^Z_k$, the $k$-dimensional volume of $P(I)$ is $V_k(P(I))$, which is equal to the length of the wedge product of the vectors $p_{i_j}$ {\zl \cite{BM}}, i.e. $V_k(P(I)) = | p_{i_1} \wedge p_{i_2} \wedge \ldots \wedge p_{i_k}|$.
In addition, for any parallelotope $P(I)$ with $I \in \mathcal{P}^Z_k$, we set $B^{\perp}(I) = \BB^d \cap \left( \aff P(I) \right)^{\perp}$ and $\S^{\perp}(I) = \Sph^{d-1} \cap \left( \aff P(I) \right)^{\perp}$, where $\left( \aff P(I) \right)^{\perp}$ denotes the orthogonal complement of $\aff P(I)$.
Finally, by a decomposition of a convex body $K$ we mean is a finite family of mutually non-overlapping convex bodies whose union is $K$.

\begin{thm}\label{thm:main1}
{Using the notation in the previous paragraph, for} any $t \geq 0$, the set $Z + t \BB^d$ can be decomposed into a family $\mathcal{F}_Z$ of mutually non-overlapping convex bodies of the form $X + t B_X$ such that
\begin{enumerate}
\item for any $X + t B_X \in \mathcal{F}_Z$, $X$ is a translate of some parallelotope $P(I)$ with $I \in \mathcal{P}^Z_k$ for some $0 \leq k \leq d$, and $B_X \subseteq B^{\perp}(I)$ is the convex hull of $o$ and a spherically convex, compact subset of $S^{\perp}(I)$;
\item if for any $0 \leq k \leq d$ and $I \in \mathcal{P}^Z_k$, $\mathcal{F}_Z(I)$ denotes the subfamily of the elements $X + t B_X$ of $\mathcal{F}_Z$, where $X$ is a translate of $P(I)$, then $\{ B_X : X+t B_X \in \mathcal{F}_Z(I) \}$ is a decomposition of $B^{\perp}(I)$.
\end{enumerate}
\end{thm}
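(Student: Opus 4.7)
The plan is to proceed by induction on $n$, the number of generating segments. The base case $n=0$ is immediate: $Z=\{o\}$, $Z+t\BB^d=t\BB^d$, and one may take $\mathcal{F}_Z=\{\{o\}+t\BB^d\}$ with $I=\emptyset$ and $B_X=\BB^d$; both conditions then hold trivially.

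For the inductive step, suppose the theorem holds for $Z'=\sum_{i=1}^{n-1}[o,p_i]$, with decomposition $\mathcal{F}_{Z'}$, and let $Z=Z'+[o,p_n]$, so that $Z+t\BB^d=(Z'+t\BB^d)+[o,p_n]$. I would process each piece $X+tB_X\in\mathcal{F}_{Z'}$, where $X$ is a translate of $P(I)$ for some $I\in\mathcal{P}^{Z'}_k$, as follows. Let $\lambda$ denote the orthogonal projection of $p_n$ onto $\lin(\{p_i:i\in I\})^\perp$, so that $\lambda\in B^\perp(I)$, and split $B_X$ along the hyperplane $\lambda^\perp\cap B^\perp(I)$ into the three parts $B_X^+=B_X\cap\{u:\langle u,\lambda\rangle\geq 0\}$, $B_X^-=B_X\cap\{u:\langle u,\lambda\rangle\leq 0\}$, and $B_X^0=B_X\cap\lambda^\perp$. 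The Minkowski sum $(X+tB_X)+[o,p_n]$ then splits into an ``unchanged'' piece $X+tB_X^-$, a ``$p_n$-shifted'' piece $(X+p_n)+tB_X^+$, and an ``equatorial band'' piece $(X+[o,p_n])+tB_X^0$. When $\lambda\neq o$, i.e.\ $p_n\notin\lin(I)$, the band piece is a translate of $P(I\cup\{n\})$ with $B_X^0\subseteq B^\perp(I)\cap p_n^\perp=B^\perp(I\cup\{n\})$, so condition (1) is preserved. When $\lambda=o$, i.e.\ $p_n\in\lin(I)$, one has $B_X^\pm=\emptyset$ and $B_X^0=B_X$; the set $X+[o,p_n]$ is then a $k$-dimensional zonotope in $\aff X$ rather than a parallelotope, and I would further decompose it by applying Shephard's original parallelotope decomposition inside $\aff X$, assigning the common angular part $B_X$ to each resulting translate of $P(I')$ for linearly independent $I'\subseteq I\cup\{n\}$ with $|I'|=k$.

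Condition (2) is then verified by tracking contributions. For $J\in\mathcal{P}^Z_j$ with $n\notin J$, the pieces in $\mathcal{F}_Z(J)$ arise from splittings of pieces in $\mathcal{F}_{Z'}(J)$, and since $B_X^+\cup B_X^-=B_X$ (with overlap only on the null set $\lambda^\perp$), the inductive hypothesis yields that their union is $B^\perp(J)$. For $n\in J$, the relevant contributions are band pieces built from original pieces with index set $I=J\setminus\{n\}$, which automatically satisfies $p_n\notin\lin(I)$ (else $J$ would be linearly dependent); their $B_X^0$'s are the slices $B_X\cap p_n^\perp$, and their union is $B^\perp(I)\cap p_n^\perp=B^\perp(J)$ by the inductive hypothesis. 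Non-overlap of the new pieces follows from the hyperplane splitting together with the inductive non-overlap of $\mathcal{F}_{Z'}$. The principal obstacle I expect is the degenerate case $p_n\in\lin(I)$: the Shephard sub-decomposition of $X+[o,p_n]$ produces several parallelotope sub-pieces all sharing the same angular part $B_X$, and verifying condition (2) for the various index sets $I'\subseteq I\cup\{n\}$ with $\lin(I')=\lin(I)$ (hence $B^\perp(I')=B^\perp(I)$) requires careful bookkeeping to ensure the collected angular parts form genuine decompositions of the common subspace $B^\perp(I)$, rather than overlapping copies.
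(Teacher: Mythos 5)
Your approach (direct induction on the number $n$ of generating segments) is different from the paper's, which restricts to cubical zonotopes and argues by induction on the dimension $d$, identifying the boundary pieces of $Z+t\BB^d$ with faces of $Z$ and their normal cones, and then handling the interior of $Z$ by a separate induction on $n$. The inductive step you propose, however, breaks in two places. First, the claimed identity $(X+tB_X)+[o,p_n] = (X+tB_X^-)\cup\bigl((X+p_n)+tB_X^+\bigr)\cup\bigl((X+[o,p_n])+tB_X^0\bigr)$ is false whenever $p_n$ has a nonzero component $\mu$ in $\lin(I)$: the translation by $\mu$ shears the angular collar attached to $X$ past the boundary of $X$ in the $\lin(I)$-direction, and none of your three parts captures the sheared region. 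For example, take $d=2$, $X=[o,e_1]$, $B_X=[o,-e_2]$, $p_n=e_1+e_2$, so that $\lambda=e_2$; then $B_X^-=B_X$ and $B_X^+=B_X^0=\{o\}$, so the proposed union has area $1+t$, whereas $(X+tB_X)+[o,p_n]$ has area $1+2t$.

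Second, even in the orthogonal case where the split of each piece is correct, the pieces produced from distinct elements of $\mathcal{F}_{Z'}$ overlap, so the resulting collection is not a decomposition of $Z+t\BB^d$. Take $Z'=[o,e_1]\subset\Re^2$ and $p_2=e_2$. The family $\mathcal{F}_{Z'}$ contains the two rectangles $[o,e_1]+t[o,e_2]$ and $[o,e_1]+t[o,-e_2]$, both with $X=[o,e_1]$; in both cases $B_X^0=B_X\cap\lambda^\perp=\{o\}$, so both produce the same band piece $[o,e_1]+[o,e_2]+t\{o\}$, and the square $Z$ is covered twice. This doubling of an interior parallelotope $P(I\cup\{n\})$ by the two collars of $W=Z'$ lying on opposite sides of a $(d-1)$-dimensional translate of $P(I)$ is systematic in every dimension; it is exactly the phenomenon the paper controls in its induction on $n$ by producing, for each $J\in\mathcal{P}^W_{d-1}$, precisely one new translate of $P(J\cup\{n\})$ when passing from $W$ to $Z$. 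Your assertion that ``non-overlap of the new pieces follows from the hyperplane splitting together with the inductive non-overlap of $\mathcal{F}_{Z'}$'' is therefore the exact point of failure: the sets $(X_i+tB_{X_i})+[o,p_n]$ for distinct $i$ do overlap even though the $X_i+tB_{X_i}$ do not, and decomposing each Minkowski sum in isolation assigns the overlap to more than one piece.
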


\begin{figure}[ht]
  \centering
  \includegraphics[width=8cm]{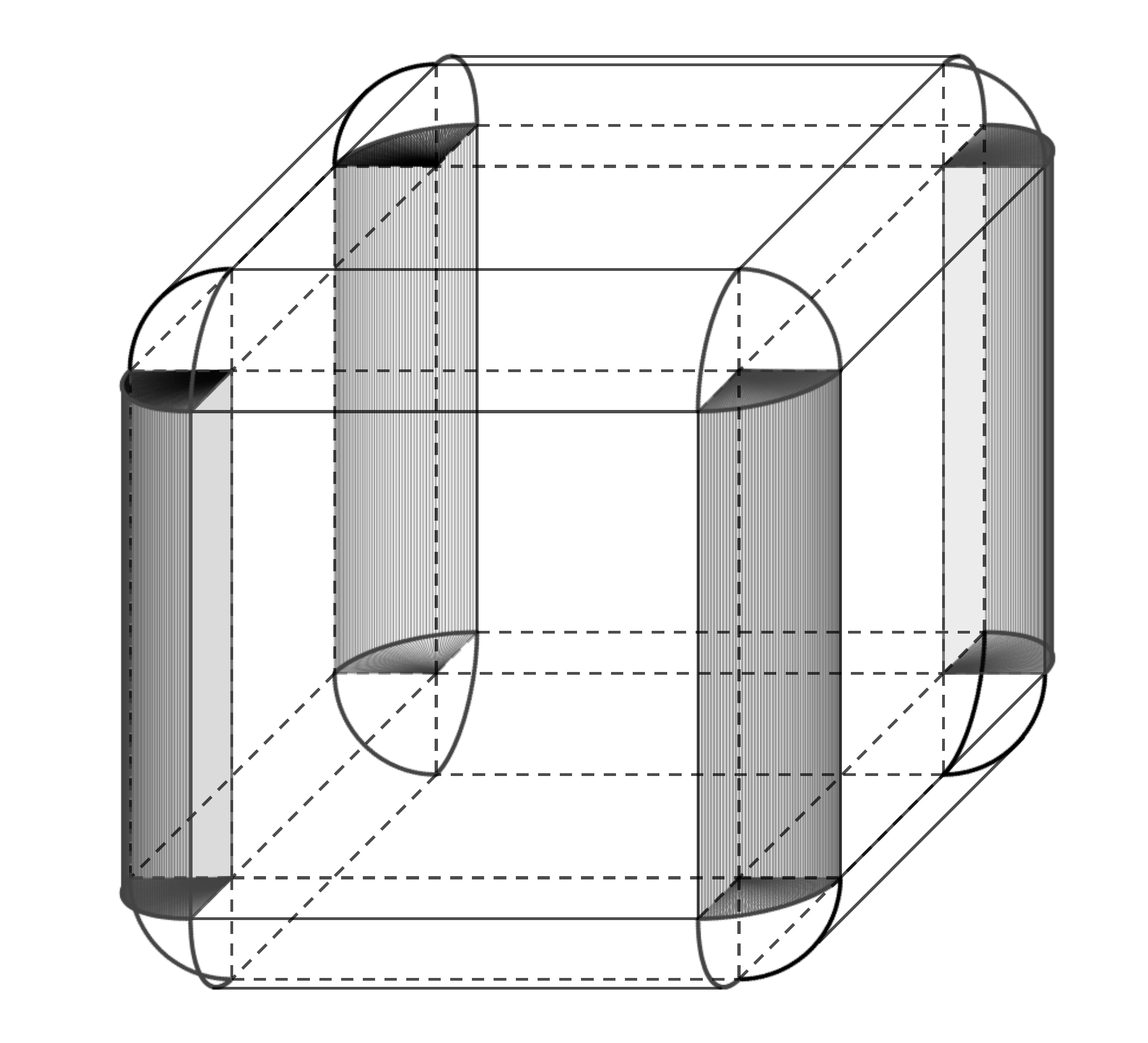}
  \caption{The body $Z + t \BB^d$ if $Z$ is a cube generated by $3$ mutually orthogonal segments. There are $4$ translates of every generating segment appearing as edges {\zl of $Z$}. The solid bodies in the picture correspond to the sets $X + t B_X$, where $X$ is a translate of a fixed generating segment.}
\end{figure}

We note that the case $t=0$ of Theorem~\ref{thm:main1} coincides with the decomposition theorem in \cite{Shephard}.

\begin{proof}
Recall \cite{Shephard} that a zonotope $Z$ is called \emph{cubical} if every facet of $Z$ is an affine cube, or equivalently, if any at most $d$ of its generating vectors in a canonical form are linearly independent. We prove the statement only for cubical zonotopes. In particular, we show that in this case Theorem~\ref{thm:main1} holds with the additional assumption that if $X + t B_X \in \mathcal{F}_Z$ with $\dim X < d$, then $X$ is a face of $Z$ and $B_X$ is the set of the outer normal vectors of $X$ of length at most one, and observe that the general case follows from this by a standard limit argument.

We prove the theorem by induction on the dimension $d$. Observe that for $d=1$ the statement is trivial. Assume that it holds for any zonotope of dimension strictly less than $d$, and let $Z=\sum_{i=1}^n [o,p_i]$, $p_i \in \Re^n$ be a zonotope.
Let $F$ be a $k$-face of $Z$. Then there is a supporting hyperplane $H$ of $Z$ with $F = H \cap Z$. By the definition of a zonotope, $F$ is a translate of the generating segments $[o,p_i]$ with the property that $p_i$ is parallel to $H$. Thus, since $Z$ is cubical, any $k$-face of $Z$ is a translate of some $P(I)$ with $I \in \mathcal{P}^Z_k$. Let $\proj : \Re^n \to \left( \aff (P(I)) \right)^{\perp}$ be the orthogonal projection onto $\left( \aff (P(I)) \right)^{\perp}$.
Then $\proj(Z)$ is a zonotope generated by the projections of the generating segments of $Z$, and since $Z$ is cubical, the vertices of $\proj(Z)$ coincide with the projections of the $k$-faces of $Z$ that are translates of $P(I)$. Since it is well known that the sets of the outer unit normal vectors of a convex polytope in $\Re^{d-k}$ is a decomposition of $\Sph^{d-k-1}$ into spherically convex sets {\zl (see, e.g. \cite{Schneider})}, we proved that the closure of $(Z + t \BB^d) \setminus Z$ can be decomposed with sets $X+tB_X$, where $X$ is a proper face of $Z$, $B_X$ is the set of outer normal vectors of $X$ of length at most one, and for any $I \in \mathcal{P}^Z_k$, $0 \leq k < d$,
$\{ B_X : X+t B_X \in \mathcal{F}_Z(I) \}$ is a decomposition of $B^{\perp}(I)$.

We are left to show that $Z$ can be decomposed into translates of $P(I)$ with $I \in \mathcal{P}^Z_d$ such that for any $I \in \mathcal{P}^Z_d$, there is exactly one translate of $P(I)$ in the decomposition. We prove it by induction on $n$. If $n=d$, the statement is obvious. Assume that it holds for cubical zonotopes generated by at most $n-1$ segments. Let $W = \sum_{i=1}^{n-1} [o,p_i]$. Then $W$ can be decomposed into translates of $P(I)$ with $I \in \mathcal{P}^W_d$ according to the requirements. Furthermore, by the previous paragraph, for any $J \in \mathcal{P}^W_{d-1}$ there are exactly two facets of $W$ and $Z$ that are translates of $P(J)$. By the definition of a zonotope, if $F_1$ and $F_2$ are the above two facets of $W$, then either $F_1$ and $p_n+F_2$, or $F_2$ and $p_n+F_1$ are facets of $Z$. Thus, the closure of $Z \setminus W$ can be decomposed into translates of $P(J \cup \{ n \})$, where $J \in \mathcal{P}^W_{d-1}$, such that for each $J \in \mathcal{P}^W_{d-1}$, there is exactly one translate of $P(J \cup \{ n \})$ in the decomposition. This proves Theorem~\ref{thm:main1}.
% for cubical zonotopes. Finally, for the general case, we may apply a standard limit argument.
\end{proof}

For Corollary~\ref{cor:intrinsic_formula}, see also \cite{BM}.

\begin{cor}\label{cor:intrinsic_formula}
For any zonotope $Z = \sum_{i=1}^n [o,p_i]$ in $\Re^d$, and any $0 \leq k \leq d$, the $k$th intrinsic volume of $Z$ is
\begin{equation}\label{eq:intrinsic}
V_k(Z) = \sum_{1 \leq i_1 < i_2 < \ldots < i_k \leq n} | p_{i_1} \wedge p_{i_2} \wedge \ldots \wedge p_{i_k} |.
\end{equation}
\end{cor}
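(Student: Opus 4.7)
The plan is to extract the formula by applying Steiner's formula~\eqref{eq:Steiner} to $Z + t\BB^d$ and then computing $V_d(Z+t\BB^d)$ in a second way via the decomposition $\mathcal{F}_Z$ provided by Theorem~\ref{thm:main1}. Comparing the two polynomials in $t$ coefficient by coefficient will yield \eqref{eq:intrinsic}.

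First, I would fix $t \geq 0$ and write
\begin{equation*}
V_d(Z + t\BB^d) = \sum_{k=0}^d \sum_{I \in \mathcal{P}^Z_k} \ \sum_{X + tB_X \in \mathcal{F}_Z(I)} V_d(X + tB_X),
\end{equation*}
using that the pieces in $\mathcal{F}_Z$ are mutually non-overlapping and their union is $Z+t\BB^d$. The key geometric observation for the next step is that, for each $X+tB_X \in \mathcal{F}_Z(I)$ with $I \in \mathcal{P}^Z_k$, the set $X$ lies in a translate of $\aff P(I)$ while $tB_X$ lies in the orthogonal complement $(\aff P(I))^{\perp}$. Hence the Minkowski sum is an orthogonal product and its $d$-dimensional volume factors as
\begin{equation*}
V_d(X + tB_X) = V_k(X) \cdot V_{d-k}(tB_X) = |p_{i_1} \wedge \cdots \wedge p_{i_k}| \cdot t^{d-k} V_{d-k}(B_X),
\end{equation*}
where I used that $X$ is a translate of $P(I)$ for $I = \{i_1,\ldots,i_k\}$ and the identification of $V_k(P(I))$ with the wedge product length recalled in the paragraph preceding Theorem~\ref{thm:main1}.

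Next, I would pull the factor $|p_{i_1} \wedge \cdots \wedge p_{i_k}| \, t^{d-k}$ out of the innermost sum and invoke part (2) of Theorem~\ref{thm:main1}: the family $\{B_X : X+tB_X \in \mathcal{F}_Z(I)\}$ is a decomposition of $B^{\perp}(I)$, which is a $(d-k)$-dimensional unit ball. Therefore
\begin{equation*}
\sum_{X+tB_X \in \mathcal{F}_Z(I)} V_{d-k}(B_X) = V_{d-k}(B^{\perp}(I)) = \kappa_{d-k},
\end{equation*}
so that
\begin{equation*}
V_d(Z + t\BB^d) = \sum_{k=0}^d \kappa_{d-k} t^{d-k} \sum_{I \in \mathcal{P}^Z_k} |p_{i_1} \wedge \cdots \wedge p_{i_k}|.
\end{equation*}

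Finally, I would compare this with Steiner's formula~\eqref{eq:Steiner}. Since both expressions are polynomials in $t$ that agree for all $t \geq 0$, matching the coefficient of $t^{d-k}$ yields \eqref{eq:intrinsic}, after the cosmetic remark that the sum over $\mathcal{P}^Z_k$ coincides with the sum over all $k$-element subsets of indices because $|p_{i_1} \wedge \cdots \wedge p_{i_k}| = 0$ whenever the chosen vectors are linearly dependent. There is no real obstacle here; the only point that requires care is the orthogonal product structure of the pieces $X + tB_X$, which is precisely what is built into the statement of Theorem~\ref{thm:main1} via the condition $B_X \subseteq B^{\perp}(I)$.
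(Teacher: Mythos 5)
Your proof is correct and follows exactly the paper's route: apply Theorem~\ref{thm:main1} to decompose $Z+t\BB^d$, use the orthogonal-product structure of each piece $X+tB_X$ together with the fact that $\{B_X\}$ decomposes $B^{\perp}(I)$ to show $V_d(Z+t\BB^d) = \sum_{k=0}^d \kappa_{d-k} t^{d-k} \sum_I V_k(P(I))$, and then match coefficients against Steiner's formula \eqref{eq:Steiner}. The paper states this more tersely, but your argument spells out precisely the intermediate computation the authors had in mind, including the harmless cosmetic point about linearly dependent $k$-tuples contributing zero.
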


\begin{proof}
Theorem~\ref{thm:main1} yields that for any $t > 0$
\[
V_d(Z+t \BB^d) = \kappa_d t^d + \sum_{k=1}^d \sum_{1 \leq i_1 < i_2 < \ldots < i_k \leq n} | p_{i_1} \wedge p_{i_2} \wedge \ldots \wedge p_{i_k} | \kappa_{d-k} t^{d-k}.
\]
{\zl Corollary~\ref{cor:intrinsic_formula} readily follows from comparing this formula with Steiner's formula in (\ref{eq:Steiner}).}
\end{proof}

\begin{rem}\label{rem:meanwidth}
Recall that the mean width $w(K)$ of a convex body $K \subset \Re^d$ is $w(K) = \frac{2 \kappa_{d-1}}{d \kappa_d} V_1(K)$ (see \cite{Gardner}). Thus, the mean width of a zonotope $Z=\sum_{i=1}^n [o,p_i]$ in $\Re^d$ is $w(Z) = \frac{2 \kappa_{d-1}}{d \kappa_d} \sum_{i=1}^n  |p_i|$.
\end{rem}

\begin{rem}\label{rem:projectionbody}
Let $P$ be a convex polytope in $\Re^d$ with $m$ facets such that the outer unit normal vector of the $i$th facet of $P$ is $u_i$ and its $(d-1)$-dimensional volume is $F_i$. {\zl Recall that for a convex body $K \in \Re^n$, the function $h : \Sph^{d-1} \to \Re$, $h(u) = V_{d-1}(K | x^{\perp})$, where $K | x^{\perp}$ is the orthogonal projection of $K$ onto the hyperplane through $o$ and with unit normal vector $x$, is the support function of a convex body; this body, denoted by $\Pi K$, is called the \emph{projection body} of $K$ \cite{Gardner}. Cauchy's projection formula \cite{Gardner} states that $V_{d-1}(K|x^{\perp}) = \int_{\Sph^{d-1}} |\langle x,u \rangle| \, dS(K,u)$, where $S(K,\cdot)$ is the surface area measure of $K$. Applying this for $P$, we obtain that $h_{\Pi K} (x) = \sum_{i=1}^m F_i |\langle x, u_i \rangle |$ for any $x \in \Sph^{d-1}$. Thus, the additivity of the support function with respect to Minkowski sums of convex bodies yields that
\[
\Pi P = \sum_{i=1}^m [o,F_i u_i]
\]
In particular, by Corollary~\ref{cor:intrinsic_formula} and since both mean width and surface area are continuous functions with respect to Hausdorff distance, for any convex body $K \subset \Re^d$, we have $w(\Pi K) = \frac{2 \kappa_{d-1}}{d \kappa_d} \surf (K)$.}
\end{rem}

{\zl
In the remaining part of Section~\ref{sec:prelim}, we introduce two problems that are related to isoperimetric problems for zonotopes.

For $p > 0$ and a multiset $\omega_n=\{ x_1, x_2, \ldots, x_n \}$ in $\Sph^{d-1}$, the \emph{$\ell_p$-polarization of $\omega_n$} is defined as
\[
M_p(\omega_n) = \max \left\{ \sum_{i=1}^n \left| \langle x_i, u \rangle \right|^p : u \in \Sph^{d-1} \right\},
\]
and the quantity
\[
M_n^p(\Sph^{d-1}) = \min \left\{ M_p(\omega_n) : \omega_n \subset \Sph^{d-1} \right\}
\]
is called the \emph{$\ell_p$-polarization (or Chebyshev) constant of $\Sph^{d-1}$}.
The $\ell_p$-polarization problem on the sphere asks for determining the value of $M_n^p(\Sph^{d-1})$ for all values of $p$ and $d$.

Note that by (\ref{eq:cirr}) and (\ref{eq:cirr_ir}), if $Z=\sum_{i=1}^n [o,x_i]$ is a zonotope in $\Re^d$, then
\begin{equation}\label{eqcr1}
\cirr(Z) = \frac{1}{2} \max \left\{  \left|\sum_{i=1}^n \varepsilon_i x_i \right| : \varepsilon_i \in \{ -1,1\}, i=1,2,\ldots, n \right\},
\end{equation}
or equivalently (using the definition of support function),
\begin{equation}\label{eqcr2}
\cirr(Z) = \frac{1}{2} \max \left\{ \sum_{i=1}^n | \langle u,x_i \rangle  | : u \in \Sph^{d-1} \right\}.
\end{equation}
Thus, the $\ell_1$-polarization constant of $\Sph^{d-1}$ is equal to twice the minimal circumradius of a zonotope generated by $n$ segments of unit length. By Remark~\ref{rem:meanwidth}, the mean width of a zonotope is a scalar multiple of the total length of its generating vectors. Thus, this problem can be restated as finding the minimum circumradius of a zonotope generated by $n$ segments of unit length. In particular, for the special case of unit vectors, the equality of the expressions in (\ref{eqcr1}) and (\ref{eqcr2}) is proved as Proposition 3 of \cite{AN} by the Lagrange multiplier method.

The function $\sigma^k_m(x_1,\ldots, x_m) = \sum_{1 \leq i_1 < i_2 < \ldots < i_k \leq m} x_{i_1} x_{i_2} \ldots x_{i_k}$ {\zl is called} the $k$th elementary symmetric function on the $m$ variables $x_1,\ldots,x_m$. The following statement, called Maclaurin's inequality, can be found e.g. in \cite{Biler}.

\begin{lem}[Maclaurin's inequality]\label{lem:Maclaurin}
Let $1 \leq k < m$ be integers, and $x_1, \ldots, x_m > 0$ be positive real numbers. Then
\[
\left( \frac{\sigma_m^k(x_1,x_2,\ldots,x_m)}{\binom{m}{k}} \right)^{\frac{1}{k}} \geq \left( \frac{\sigma_m^\textbf{k+1}(x_1,x_2,\ldots,x_m)}{\binom{m}{k+1}} \right)^{\frac{1}{k+1}},
\]
with equality if and only if all $x_i$ are equal.
\end{lem}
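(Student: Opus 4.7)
The plan is to deduce the inequality from Newton's inequalities
\[
S_k^2 \geq S_{k-1}\, S_{k+1}, \qquad 1 \leq k \leq m-1,
\]
where $S_k := \sigma_m^k(x_1,\ldots,x_m)/\binom{m}{k}$ and $S_0 := 1$. Rewriting Maclaurin's inequality in the equivalent form $S_k^{k+1} \geq S_{k+1}^k$, I would induct on $k$. The base case $k=1$ is exactly $S_1^2 \geq S_2$, i.e.\ Newton at $k=1$. For the inductive step, assuming $S_{k-1} \geq S_k^{(k-1)/k}$, Newton yields
\[
S_k^2 \;\geq\; S_{k-1}\, S_{k+1} \;\geq\; S_k^{(k-1)/k}\, S_{k+1},
\]
which rearranges to $S_{k+1} \leq S_k^{(k+1)/k}$, i.e.\ $S_k^{k+1} \geq S_{k+1}^k$, closing the induction.

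The substantive content is therefore Newton's inequality itself, which I would prove by applying Rolle's theorem to the polynomial
\[
p(t) \;=\; \prod_{i=1}^m (t-x_i) \;=\; \sum_{j=0}^m (-1)^j \binom{m}{j} S_j\, t^{m-j},
\]
whose roots are the positive reals $x_i$. A direct computation based on the identity $(n-j)\binom{n}{j} = n\binom{n-1}{j}$ shows that if $f$ is a degree-$n$ polynomial with $n$ positive roots and normalized symmetric functions $T_0,T_1,\ldots,T_n$, then $f'$ has degree $n-1$ with positive real roots (by Rolle) whose normalized symmetric functions are exactly $T_0,T_1,\ldots,T_{n-1}$. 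Differentiating $p$ a total of $m-k-1$ times thus produces a degree-$(k+1)$ polynomial $q$ whose first $k+2$ normalized symmetric functions coincide with $S_0,S_1,\ldots,S_{k+1}$. It therefore suffices to prove $S_k^2 \geq S_{k-1} S_{k+1}$ for the $k+1$ positive roots of $q$. Applying the reversal $t \mapsto t^{k+1} q(1/t)$ (which, up to a positive scalar, sends $S_j$ to $S_{k+1-j}/S_{k+1}$) further reduces the task to proving Newton at index $1$ for $k+1$ positive reals $y_1,\ldots,y_{k+1}$, namely $S_1^2 \geq S_2$; and this last inequality is equivalent to the trivial bound $\sum_{i<j}(y_i-y_j)^2 \geq 0$ after clearing denominators.

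The main bookkeeping obstacle is verifying that differentiation preserves the normalized symmetric functions and that the reversal acts by $S_j \mapsto S_{k+1-j}/S_{k+1}$; both facts are short, but require care with binomial factors and with the fact that $S_{k+1}>0$ (guaranteed by positivity of all $x_i$). Once Newton is established, the equality characterization follows by backward propagation: equality in $\sum_{i<j}(y_i-y_j)^2 \geq 0$ forces all $y_i$ equal, which transfers through the reversal and through differentiation to force $x_1=\cdots=x_m$ in the original variables, and equality in the Newton-to-Maclaurin induction step inherits the same condition.
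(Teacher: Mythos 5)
The paper does not prove this lemma at all: it is stated as a classical result and simply referenced to the textbook of Biler and Witkowski~\cite{Biler}, so there is no paper proof to compare against. Your argument, by contrast, supplies a genuine derivation, and it is the standard one: reduce Maclaurin to the chain of Newton inequalities $S_k^2 \geq S_{k-1}S_{k+1}$ by the induction $S_k^{k+1} \geq S_{k+1}^k$, and prove Newton by passing to the monic polynomial with roots $x_i$, differentiating $m-k-1$ times (the identity $(n-j)\binom{n}{j}=n\binom{n-1}{j}$ indeed shows the normalized symmetric functions $S_0,\ldots,S_{k+1}$ are preserved, and Rolle keeps the roots real and positive), applying the reversal $t\mapsto t^{k+1}q(1/t)$ to move the index down to $k=1$, and finishing with $\sum_{i<j}(y_i-y_j)^2 \geq 0$. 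The computations you flag as bookkeeping are all correct. The one place your sketch is genuinely terse is the transfer of the equality case back through the derivatives: from $p^{(m-k-1)}(t)=c(t-a)^{k+1}$ one needs the observation that an antiderivative of $c'(t-a)^{\ell}$ with all real roots must again be a constant multiple of $(t-a)^{\ell+1}$ (since $(t-a)^{\ell+1}+d$ with $d\neq 0$ has non-real roots), iterated to conclude $p(t)=c''(t-a)^m$; this is easy but should be spelled out if the equality characterization is to be self-contained. Aside from that, your proof is complete and adds value precisely because the paper gives none.
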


A generalization of this inequality for a set of vectors in $\Re^d$ was conjectured in \cite{BM} as follows.

\begin{conj}[Brazitikos, McIntyre]\label{conj:Maclaurin}
Let $x_1, x_2, \ldots, x_n \in \Re^d$ be given with $1 \leq d \leq n$. Then for any $p \in [0,\infty]$ and $2 \leq k \leq d$, we have
\begin{equation}\label{eq:vv_Mac}
\left( \frac{\sum_{1 \leq i_1 < \ldots < i_k \leq n} |x_{i_1} \wedge x_{i_2} \wedge \ldots \wedge x_{i_k} |^p}{ \binom{n}{k}}\right)^{\frac{1}{p k}} \leq
\left( \frac{\sum_{1 \leq i_1 < \ldots < i_{k-1} \leq n} |x_{i_1} \wedge x_{i_2} \wedge \ldots \wedge x_{i_{k-1}} |^p}{ \binom{n}{k-1}}\right)^{\frac{1}{p (k-1)}},
\end{equation}
with equality if and only if $n=d$ and the vectors form an orthonormal basis.
\end{conj}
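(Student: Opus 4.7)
The plan is to attack the conjecture case by case, since no single argument seems to cover the whole range of exponents $p$. The common tool is a pointwise Maclaurin-type inequality for each $k$-element index set $I \subset \{1, \ldots, n\}$: using the identity $|x_J|^2 = \det G_J$, where $G_J$ is the principal submatrix of the Gram matrix $G_I = (\langle x_i, x_j \rangle)_{i,j \in I}$ indexed by $J \subset I$, Sz\'asz's inequality for products of principal minors of a positive semidefinite matrix yields
\[
\prod_{J \subset I,\, |J|=k-1} |x_J| \;\geq\; |x_I|^{k-1}, \qquad (\star)
\]
and combining $(\star)$ with the AM-GM and power mean inequalities gives, for every $p > 0$,
\[
\sum_{J \subset I,\, |J| = k-1} |x_J|^p \;\geq\; k \Bigl(\prod_{J \subset I} |x_J|\Bigr)^{p/k} \;\geq\; k\, |x_I|^{p(k-1)/k}. \qquad (\dagger)
\]

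First I would settle the cases $p \in \{0, 2, \infty\}$, where global arguments are clean. For $p \to 0^+$, inequality \eqref{eq:vv_Mac} reduces to a comparison of geometric means; taking logarithms in $(\star)$, summing over all $k$-subsets $I$, and using the double-counting identity together with the algebraic identity $(n-k+1)\binom{n}{k-1} = k\binom{n}{k}$ would yield the required inequality, with the equality characterization ($n=d$ and scaled orthonormal vectors) coming from the joint equality cases of Sz\'asz and AM-GM. For $p = 2$, the Cauchy-Binet identity gives
\[
\sum_{|I| = k} |x_I|^2 \;=\; \sigma^k_n(\lambda_1, \ldots, \lambda_n),
\]
where $\lambda_1, \ldots, \lambda_n$ are the eigenvalues of the $n \times n$ Gram matrix $(\langle x_i, x_j \rangle)_{i,j=1}^n$, so Maclaurin's inequality (Lemma~\ref{lem:Maclaurin}) applied to the $\lambda_i$ directly yields \eqref{eq:vv_Mac} for $p = 2$. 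For $p = \infty$, applying $(\dagger)$ at $p = 1$ to a maximizer $I^*$ of $|x_I|$ gives $k \max_{J \subset I^*} |x_J| \geq k\,|x_{I^*}|^{(k-1)/k}$, hence $\max_{|J|=k-1} |x_J|^{1/(k-1)} \geq \max_{|I|=k} |x_I|^{1/k}$.

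The main obstacle is the case of general $p \in (0, \infty) \setminus \{2\}$, and in particular $p = 1$. At $p = 1$, Corollary~\ref{cor:intrinsic_formula} recasts the conjecture as a strengthening of Aleksandrov's classical inequality tailored to the zonotope $Z = \sum_{i=1}^n [o, x_i]$, namely
\[
\Bigl( \tfrac{V_k(Z)}{\binom{n}{k}} \Bigr)^{1/k} \;\leq\; \Bigl(\tfrac{V_{k-1}(Z)}{\binom{n}{k-1}}\Bigr)^{1/(k-1)}.
\]
Summing $(\dagger)$ over all $k$-subsets $I$ gives $(n-k+1)\sum_{|J|=k-1} |x_J|^p \geq k \sum_{|I|=k} |x_I|^{p(k-1)/k}$, but converting the right-hand side into a quantity involving $\bigl(\sum_I |x_I|^p\bigr)^{(k-1)/k}$ requires Jensen's inequality for the concave map $y \mapsto y^{(k-1)/k}$, which goes in the wrong direction and opens a Jensen gap. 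Closing this gap --- either by a sharper pointwise inequality that exploits correlations among the $|x_I|$ across different $k$-subsets, or, at $p = 1$, by a global Aleksandrov--Fenchel-type argument for $Z$ combined with the Shephard-type formula of Corollary~\ref{cor:intrinsic_formula} --- will be the principal difficulty.
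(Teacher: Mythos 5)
This statement is a \emph{conjecture}, not a theorem: the paper attributes it to Brazitikos and McIntyre and explicitly records that it is only known in special cases (the source \cite{BM} settles $p=0$, $p=\infty$, $p=2$ with $n=d$, and $p=1$ with $n=d$ and $k\in\{2,3,d\}$; the present paper extends $p=2$ to all $n\geq d$ in Theorem~\ref{thm:squared2} and derives a $p=1$ consequence for $n\leq d+1$ from its isoperimetric results). There is thus no full proof in the paper to match your attempt against, and any complete blind proof would be new. Your proposal is consistent with this: you correctly resolve $p\in\{0,2,\infty\}$ and honestly identify general $p$, in particular $p=1$ with $n>d$, as the remaining obstruction.

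Where you overlap with the paper: your $p=2$ argument via Cauchy--Binet and Maclaurin applied to the eigenvalues of the $n\times n$ Gram matrix is exactly the mechanism behind Theorem~\ref{thm:squared}/\ref{thm:squared2}. One caveat: when $n>d$ the Gram matrix has rank $\leq d$, so at least $n-d$ of the $\lambda_i$ vanish, and Lemma~\ref{lem:Maclaurin} (which requires all arguments strictly positive) does not apply directly; you need the nonnegative extension Lemma~\ref{lem:Maclaurin_v2}, which the paper proves for precisely this reason. This also shows that the equality case as stated in the conjecture is incomplete — degenerate configurations (dimension of the span at most $k-1$) give equality as well, as the paper's equality characterizations note. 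Your $p\to 0^+$ argument (Sz\'asz $\Rightarrow$ $(\star)$, sum the logs, double-count via $(n-k+1)\binom{n}{k-1}=k\binom{n}{k}$) and your $p=\infty$ argument (Sz\'asz + AM--GM at a maximizing $k$-set) are both correct and give clean self-contained proofs of those endpoints.

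On the gap: your diagnosis is accurate. Summing $(\dagger)$ over $k$-subsets yields $(n-k+1)\sum_{|J|=k-1}|x_J|^p\geq k\sum_{|I|=k}|x_I|^{p(k-1)/k}$, and passing from $\sum_I |x_I|^{p(k-1)/k}$ to $\bigl(\sum_I|x_I|^p\bigr)^{(k-1)/k}$ would need Jensen for the concave power, which goes the wrong way: you have traded the desired bound for a weaker one and then need to recover a power-mean gain you just gave away. This is exactly the part the literature has not closed. Worth noting: the paper's own partial $p=1$ results for $n\leq d+1$ come from a completely different device — shadow systems and Steiner symmetrization of the simplex $\conv\{x_1,\dots,x_{d+1}\}$ (Lemma~\ref{lem:shadow}, Lemma~\ref{lem:Steiner}) — not from any pointwise Gram/Sz\'asz estimate, which is why that method is currently confined to $n\leq d+1$, and why it only compares $V_k$ to $V_d$ rather than $V_k$ to $V_{k-1}$. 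Either sharpening the pointwise estimate to exploit correlations among the $|x_I|$, or extending the symmetrization argument past $n=d+1$ and to consecutive indices, would be a genuine advance beyond what the paper achieves.
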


The authors of \cite{BM} proved this conjecture for $p=0$ and $p=\infty$, for $p=2$ and $n=d$, and for $p=1$, $n=d$ and $k=2,3,d$. They also pointed out (see also Corollary~\ref{cor:intrinsic_formula}) that if $p=1$, the numerators in (\ref{eq:vv_Mac}) correspond to the $k$th and $(k-1)$st intrinsic volumes of the zonotope $\sum_{i=1}^n [o,x_i]$, showing that this case of Conjecture~\ref{conj:Maclaurin} directly leads to isoperimetric problems for zonotopes.
}

\section{Isoperimetric problems for zonotopes generated by a small number of segments}\label{sec:rhombic}

Zonotopes in $\mathcal{Z}_{d,d}$ are called parallelotopes, and those in $\mathcal{Z}_{d,d+1}$ are called rhombic dodecahedra \cite{Bezdek}. The goal of this section is to prove isoperimetric inequalities for them. We note that if $Z=\sum_{i=1}^{d+1} [o,p_i]$ is a rhombic dodecahedron where the points $p_i$ are the vertices of a regular simplex centered at $o$, then $Z$ is called a \emph{regular rhombic dodecahedron}. A regular rhombic dodecahedron in $\Re^3$ is the Voronoi cell of a face-centered cubic lattice. Thus, by the seminal result of Hales \cite{Hales} proving Kepler's conjecture, among rhombic dodecahedra in $\Re^3$ with unit inradius, the ones with minimal volume are the regular ones. This consequence of the result of Hales was strengthened in \cite{Bezdek}, which we quote for completeness.

\begin{thm}[Bezdek]\label{thm:Bezdek}
Let $1 \leq k \leq d$ be arbitrary. Among rhombic dodecahedra in $\Re^d$ of unit inradius, the ones with minimal $k$th intrinsic volumes are the regular ones.
\end{thm}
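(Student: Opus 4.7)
The plan is to reduce Theorem~\ref{thm:Bezdek} to a finite-dimensional constrained optimization problem on the generators $p_1,\ldots,p_{d+1}$ of $Z = \sum_{i=1}^{d+1}[o,p_i]$, using Corollary~\ref{cor:intrinsic_formula} for the objective $V_k(Z)$ and (\ref{eq:cirr_ir}) for the inradius constraint $\ir(Z) = 1$.

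First I would put $Z$ into a convenient canonical form. The $d+1$ vectors $p_i$ in $\Re^d$ admit a unique (up to scalar) linear dependence $\sum_{i=1}^{d+1} a_i p_i = o$, and since replacing any $p_i$ by $-p_i$ leaves $Z$ unchanged up to translation, we may assume $a_i > 0$ for every $i$. The regular rhombic dodecahedron then corresponds exactly to $a_1 = \cdots = a_{d+1}$ (equivalently $\sum p_i = o$). A direct expansion using the linear relation yields $|p_1 \wedge \cdots \wedge \widehat{p_i} \wedge \cdots \wedge p_{d+1}| = a_i\, C$ for a common constant $C > 0$, so Corollary~\ref{cor:intrinsic_formula} gives $V_d(Z) = C\sum_i a_i$.

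Next I would describe the inradius through the facets of $Z$. In general position $Z$ has $2\binom{d+1}{2}$ facets: for every pair $\{i,j\}$ there is a unit vector $u_{ij}$, unique up to sign, orthogonal to every $p_k$ with $k \notin \{i,j\}$, giving two opposite facets at a common distance
\[
h_Z(u_{ij}) \;=\; \frac{a_i + a_j}{2\,a_j}\,\bigl|\langle u_{ij}, p_i\rangle\bigr|
\]
from the center; this follows from (\ref{eq:cirr_ir}) and the identity $a_i\langle u_{ij}, p_i\rangle + a_j\langle u_{ij}, p_j\rangle = 0$, which is an immediate consequence of the linear relation and the orthogonality. Since $Z$ is a polytope, $\ir(Z) = \min_{\{i,j\}} h_Z(u_{ij})$, so the constraint $\ir(Z) = 1$ becomes the system $h_Z(u_{ij}) \geq 1$ over all pairs $\{i,j\}$ with equality in at least one.

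It remains to carry out the optimization. For $k = d$ the target $V_d(Z) = C\sum_i a_i$ is linear in the $a_i$'s, while the constraints couple the $a_i$'s with the geometric quantities $|\langle u_{ij}, p_i\rangle|$ (which in turn depend on the Gram matrix of the $p_i$'s); the $S_{d+1}$-invariance of the regular configuration pins it down as a critical point, and a symmetrization/Lagrange-multiplier argument would then force the minimum to occur at all $a_i$ equal and at a Gram matrix of the $p_i$'s equal to that of a regular simplex. For $1 \leq k < d$ there is no such clean reduction via the linear relation, since the relation is $d$-dimensional. Here I would proceed in two stages: first settle $k = d$ directly (which for $d = 3$ is essentially Hales' proof of Kepler's conjecture, as noted before the statement), and then derive the cases $k < d$ from monotonicity-type results among intrinsic volumes of zonotopes, either by iterating the decomposition in Theorem~\ref{thm:main1} or via Alexandrov--Fenchel inequalities. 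The hard part is this last step, where the coupling between the $a_i$'s, the angles between the $p_i$'s, and the sizes of $k$-wedges must be disentangled into a sharp inequality valid uniformly in $k$.
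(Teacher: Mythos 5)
This statement is not proved in the paper at all: it is Bezdek's theorem, quoted verbatim from the reference \cite{Bezdek} ``for completeness.'' The text immediately before it makes this explicit, noting that the $d=3$, $k=d$ case follows from Hales' proof of the Kepler conjecture and that ``this consequence\ldots was strengthened in \cite{Bezdek}, which we quote.'' So there is no proof in the paper to compare your attempt against, and you would be expected to cite Bezdek rather than reprove the result.

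Beyond that, your proposal is not a proof but an outline with a self-acknowledged gap at the crucial step. The reductions you set up are reasonable and the facet-distance formula $h_Z(u_{ij}) = \frac{a_i+a_j}{2a_j}\,|\langle u_{ij},p_i\rangle|$ checks out, but the actual work --- showing that the constrained minimum of $V_k(Z)$ subject to $\ir(Z)=1$ is attained only at the regular configuration, for \emph{every} $k$ --- is left as ``the hard part\ldots must be disentangled.'' A Lagrange-multiplier or symmetrization heuristic only locates the regular configuration as a critical point; it does not rule out other local minima or boundary degenerations, and nothing in the sketch controls how the Gram matrix of the $p_i$ interacts with the family of $\binom{d+1}{2}$ inradius constraints. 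Your suggestion to ``derive the cases $k<d$\ldots via Alexandrov--Fenchel'' also goes the wrong way: Alexandrov--Fenchel gives upper bounds on lower intrinsic volumes from higher ones under volume normalization, whereas here the normalization is by inradius and you need lower bounds on $V_k$, so that chain of inequalities does not close. Finally, the claim that the $k=d$, $d=3$ case ``is essentially Hales' proof'' overstates what is needed; Bezdek's argument for this theorem is elementary and independent of the sphere-packing machinery. In short: the theorem is imported, your sketch is not a proof, and the missing step is precisely the part that makes the theorem nontrivial.
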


Intuitively, regular rhombic dodecahedra appear as natural candidates for solutions in isoperimetric problems for rhombic dodecahedra. Nevertheless, as we will see in Subsection~\ref{subsec:rh_cr_mw}, this does not always hold.

In Subsection~\ref{subsec:rh_vol} we find the minimal intrinsic volumes and circumradii of the elements of $\mathcal{Z}_{d,d}$ and $\mathcal{Z}_{d,d+1}$ with a given volume. In Subsections~\ref{subsec:rh_cr_mw} and \ref{subsec:rh_mw_V2} we investigate the problems of finding the minimal circumradius and the second intrinsic volume $V_2(\cdot)$, respectively, of a parallelotope or a rhombic dodecahedron of a given mean width, respectively.

In Subsection~\ref{subsec:rh_squared} our investigation is motivated by a result of Tanner \cite{Tanner}, who defined the \emph{total squared $k$-content of a simplex $S \subset \Re^n$} as the sum of the squares of the $k$-volumes of its $k$-faces, and proved that among simplices with a given squared $1$-content, regular simplices have maximal squared $k$-content for all $1 \leq k \leq d$. In this subsection we define the total squared $k$-volume of a zonotope with $1 \leq k \leq d$, and find its minimum among the parallelotopes and rhombic dodecahedra with a given squared $l$-volume for all $k < l \leq d$.

In our investigation, we set $\mathcal{Z}_{\mathrm{p}} = \mathcal{Z}_{d,d}$ and $\mathcal{Z}_{\mathrm{rd}} = \mathcal{Z}_{d,d+1}$, and we denote by $Z_{\mathrm{p}}^{\mathrm{reg}} \in  \mathcal{Z}_{d,d}$ a cube, and by $Z_{\mathrm{rd}}^{\mathrm{reg}} \in \mathcal{Z}_{d,d+1}$ a regular rhombic dodecahedron.

\subsection{The intrinsic volumes and circumradius of a zonotope with a given volume}\label{subsec:rh_vol}

Our main result in Subsection~\ref{subsec:rh_vol} is Theorem~\ref{thm:volume}.

\begin{thm}\label{thm:volume}
Let $1 \leq k \leq d-1$. Then, for any $Z_i \in \mathcal{Z}_i$ with $V_d(Z_i)=V_d(Z_i^{\mathrm{reg}})$, where $i \in \{ \mathrm{p}, \mathrm{rd} \}$, we have
\[
V_k(Z_i) \geq V_k(Z_i^{\mathrm{reg}})
\]
with equality if and only if $Z_i$ is congruent to $Z_i^{\mathrm{reg}}$. Furthermore, we have
\[
\cirr(Z_i) \geq \cirr(Z_i^{\mathrm{reg}}).
\]
\end{thm}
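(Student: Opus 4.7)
The parallelotope case follows from Corollary~\ref{cor:intrinsic_formula} combined with classical multilinear inequalities. Setting $Z_{\mathrm{p}} = \sum_{j=1}^d [o,p_j]$ and $A = [p_1,\ldots,p_d]$, and rescaling so that $V_d(Z_{\mathrm{p}}) = V_d(Z_{\mathrm{p}}^{\mathrm{reg}}) = 1$, we have $|\det A|=1$ and $Z_{\mathrm{p}}^{\mathrm{reg}}$ is the unit cube. Corollary~\ref{cor:intrinsic_formula} expresses $V_k(Z_{\mathrm{p}})$ as a sum of $\binom{d}{k}$ non-negative terms $|p_{i_1}\wedge\cdots\wedge p_{i_k}|$; AM-GM bounds this sum below by $\binom{d}{k}$ times the geometric mean, and the geometric mean is bounded below by Hadamard's inequality applied to $\Lambda^k A$ (whose columns in the standard basis of $\Lambda^k\Re^d$ are exactly those wedge products), giving $\prod|p_{i_1}\wedge\cdots\wedge p_{i_k}| \ge |\det\Lambda^k A| = |\det A|^{\binom{d-1}{k-1}} = 1$. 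Combining yields $V_k(Z_{\mathrm{p}}) \ge \binom{d}{k} = V_k(Z_{\mathrm{p}}^{\mathrm{reg}})$; simultaneous equality in AM-GM and Hadamard forces the $p_i$'s to be mutually orthogonal and of equal length, so $Z_{\mathrm{p}}$ is a cube. For the circumradius, averaging $|\sum_i \epsilon_i p_i|^2$ over $\epsilon \in \{-1,1\}^d$ in (\ref{eq:cirr}) gives $(2\cirr(Z_{\mathrm{p}}))^2 \ge \sum |p_i|^2 \ge d(\prod|p_i|)^{2/d} \ge d|\det A|^{2/d} = d$ by AM-GM and Hadamard.

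For the rhombic dodecahedron case, write $Z_{\mathrm{rd}} = \sum_{i=1}^{d+1}[o,p_i]$ with $A=[p_1,\ldots,p_{d+1}]$ of size $d\times(d+1)$. The linear dependence of the columns of $A$ prevents a direct exterior-power Hadamard argument. The identity
\[
V_k(Z_{\mathrm{rd}}) = \tfrac{1}{d+1-k}\sum_{j=1}^{d+1} V_k(Z_j), \qquad V_d(Z_{\mathrm{rd}}) = \sum_{j=1}^{d+1} V_d(Z_j),
\]
with $Z_j = \sum_{i\ne j}[o,p_i]$, which follows immediately from Corollary~\ref{cor:intrinsic_formula} (each $k$-subset is contained in exactly $d+1-k$ of the $d$-subsets), reduces the problem to parallelotopes, but applying the parallelotope case term-by-term is lossy because the $Z_j^{\mathrm{reg}}$'s of $Z_{\mathrm{rd}}^{\mathrm{reg}}$ are not cubes. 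The plan is therefore to attack the constrained problem $\min V_k(Z_{\mathrm{rd}})$ subject to $V_d(Z_{\mathrm{rd}}) = V_d(Z_{\mathrm{rd}}^{\mathrm{reg}})$ directly via Lagrange multipliers, using $|p_{i_1}\wedge\cdots\wedge p_{i_k}|^2 = \det G_I$ where $G = A^T A$ is the rank-$d$ Gram matrix. The symmetric critical point is expected to satisfy $|p_i|$ constant and $\langle p_i,p_j\rangle$ constant for $i\ne j$ (equal to $-|p_1|^2/d$ by the identity $\sum c_i p_i = 0$), characterizing $Z_{\mathrm{rd}}^{\mathrm{reg}}$; global minimality would follow from a continuity/symmetrization argument exploiting the $S_{d+1}$-symmetry of the problem together with the fact that the nonzero eigenvalues of $G$ are all equal at the symmetric point. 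The circumradius bound is parallel in structure but more delicate because the naive average-over-$\epsilon$ bound $(2\cirr(Z_{\mathrm{rd}}))^2 \ge \mathrm{tr}(AA^T)$ is not tight at $Z_{\mathrm{rd}}^{\mathrm{reg}}$ (the maximum over sign patterns strictly exceeds the average); one must instead identify the worst-case $\epsilon$ and bound $|\sum\epsilon_i p_i|$ accordingly.

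The main obstacle is the global-minimality step in the rhombic dodecahedron case. The configuration space of $(p_1,\ldots,p_{d+1})$ modulo congruence is non-compact, and its boundary contains degenerate configurations in which some $p_i$ vanishes or some subset of generators becomes collinear, so that $Z_{\mathrm{rd}}$ degenerates to a lower-dimensional or fewer-generator zonotope. The plan is to show that in such degenerate limits either $V_d(Z_{\mathrm{rd}})\to 0$ or $V_k(Z_{\mathrm{rd}})$ strictly exceeds $V_k(Z_{\mathrm{rd}}^{\mathrm{reg}})$, reducing the extremum problem to a compact region where the Lagrange analysis identifies the unique minimum and yields both the $V_k$ and circumradius inequalities.
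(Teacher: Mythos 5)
Your parallelotope argument is complete and correct, and takes a genuinely different route than the paper's: you apply Hadamard's inequality to the compound matrix $\Lambda^k A$ via the Sylvester--Franke identity $\det(\Lambda^k A)=(\det A)^{\binom{d-1}{k-1}}$, combined with AM--GM, whereas the paper handles $n=d$ by the same Steiner-symmetrization argument it uses for $n=d+1$. For $n=d$, your algebraic route is shorter, and your circumradius bound via averaging over sign patterns is also correct.

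However, the rhombic dodecahedron case is a plan with a genuine gap, and this is where the real content of the theorem lies. You correctly note that the exterior-power argument fails for $d+1$ linearly dependent generators, but the route you propose --- Lagrange multipliers on the Gram matrix plus a compactification/degeneration argument, with global minimality deferred to ``a continuity/symmetrization argument exploiting the $S_{d+1}$-symmetry'' --- leaves unspecified exactly the step that does the work. A critical-point analysis alone does not rule out other interior minimizers; one needs an actual symmetrization mechanism. The paper's mechanism: after flipping signs of the $p_i$ and translating so that $o\in S=\conv\{p_1,\dots,p_{d+1}\}$, one has $V_d(Z)=d!\,V_d(S)$ (decompose $S$ into cones from $o$ over its facets) and $V_k(Z)=k!\,f_k(S)$ with $f_k(S)=\sum_F V_k(\{o\}\cup F)$ over $(k-1)$-faces $F$; a shadow-system lemma (Lemma~\ref{lem:shadow}: strict convexity of volume along linear parameter systems) then shows that Steiner symmetrization of $S$ with respect to the perpendicular bisector hyperplane of any edge strictly decreases $f_k$ unless $S$ is already symmetric to it, while preserving $o$ and $V_d(S)$; iterating forces the regular simplex centered at $o$ as the unique minimizer. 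Your circumradius sketch for the rhombic case has the analogous gap; the paper resolves it by observing that $g(S(t))=\max_{\epsilon}\left|\sum_i\epsilon_i p_i(t)\right|$ is convex in $t$ as a maximum of convex functions (so $g(S(0))\le g(S(1))$), and handles the ensuing equality cases by strict monotonicity of $f_1$ under symmetrization.
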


We note that our next corollary readily follows from Theorems~\ref{thm:Bezdek} and \ref{thm:volume}.

\begin{cor}\label{cor:ircr}
For any $Z_i \in \mathcal{Z}_i$ with $\ir(Z_i)=\ir(Z_i^{\mathrm{reg}})$, where $i \in \{ \mathrm{p}, \mathrm{rd} \}$, we have
\[
\cirr(Z_i) \geq \cirr(Z_i^{\mathrm{reg}})
\]
with equality if and only if $Z_i$ is congruent to $Z_i^{\mathrm{reg}}$.
\end{cor}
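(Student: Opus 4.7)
The approach treats parallelotopes and rhombic dodecahedra separately, using the formula $V_k(Z)=\sum_{|I|=k}|p_I|$ from Corollary~\ref{cor:intrinsic_formula}, where $|p_I|=|p_{i_1}\wedge\cdots\wedge p_{i_k}|=\sqrt{\det G_{II}}$ with $G=[\langle p_i,p_j\rangle]$ denoting the Gram matrix of the generators.

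For a parallelotope $Z_{\mathrm{p}}=\sum_{i=1}^d[o,p_i]$, I would apply the AM-GM inequality,
\[
V_k(Z_{\mathrm{p}})\geq\binom{d}{k}\Bigl(\prod_{|I|=k}|p_I|\Bigr)^{1/\binom{d}{k}},
\]
and then invoke Shearer's inequality (equivalently, Han's inequality for Gaussian entropies, or the sub-modularity of $I\mapsto\log\det G_{II}$): $\prod_{|I|=k}\det G_{II}\geq(\det G)^{\binom{d-1}{k-1}}$, giving $\prod_{|I|=k}|p_I|\geq V_d(Z_{\mathrm{p}})^{\binom{d-1}{k-1}}$. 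Since $\binom{d-1}{k-1}/\binom{d}{k}=k/d$, combining the two inequalities yields $V_k(Z_{\mathrm{p}})\geq\binom{d}{k}V_d(Z_{\mathrm{p}})^{k/d}=V_k(Z_{\mathrm{p}}^{\mathrm{reg}})$ under the hypothesis. Equality forces the $|p_I|$ to be equal (AM-GM) and $G$ to be diagonal (Shearer), so $G$ must be a scalar multiple of the identity and $Z_{\mathrm{p}}$ is congruent to $Z_{\mathrm{p}}^{\mathrm{reg}}$.

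For the circumradius bounds I would use (\ref{eq:cirr}) and average $|\sum_i\varepsilon_ip_i|^2$ over $\varepsilon\in\{-1,1\}^n$, obtaining $(2\cirr(Z))^2\geq\sum_i|p_i|^2$. In the parallelotope case, combining this with $\sum_i|p_i|^2\geq d(\prod_i|p_i|)^{2/d}\geq d\,V_d(Z_{\mathrm{p}})^{2/d}$ via AM-GM and Hadamard's inequality gives $\cirr(Z_{\mathrm{p}})\geq\tfrac{\sqrt{d}}{2}V_d(Z_{\mathrm{p}})^{1/d}=\cirr(Z_{\mathrm{p}}^{\mathrm{reg}})$, sharp for the cube. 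For the rhombic dodecahedron, the same averaging is not sharp at $Z_{\mathrm{rd}}^{\mathrm{reg}}$; instead, fixing any index $j$ and choosing $\varepsilon_i=\mathrm{sign}\langle p_i,p_j\rangle$ for $i\neq j$ (with $\varepsilon_j=1$) yields
\[
2\cirr(Z)\geq|p_j|+\sum_{i\neq j}\frac{|\langle p_i,p_j\rangle|}{|p_j|},
\]
a lower bound attained at $Z_{\mathrm{rd}}^{\mathrm{reg}}$.

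The main obstacle is the rhombic dodecahedron case of the $V_k$ inequality. A natural starting point is the identity $V_k(Z)=\frac{1}{d+1-k}\sum_{j=1}^{d+1}V_k(Z_j)$, where $Z_j$ is the $d$-parallelotope generated by $\{p_i:i\neq j\}$, combined with the parallelotope inequality $V_k(Z_j)\geq\binom{d}{k}V_d(Z_j)^{k/d}$. However, since each $Z_j^{\mathrm{reg}}$ in the regular configuration is a symmetric rhombohedron rather than a cube, this route yields only $V_k(Z)\geq\frac{\binom{d}{k}}{d+1-k}V_d(Z)^{k/d}$, strictly weaker than $V_k(Z_{\mathrm{rd}}^{\mathrm{reg}})$. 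A similar loss arises if one iterates Shearer's inequality over the $d\times d$ principal submatrices of the singular Gram matrix $G$. Recovering the sharp bound will require either a refined matrix-theoretic inequality incorporating both the rank deficiency of $G$ and the constraint $V_d(Z)=\sum_{|J|=d}|p_J|$, or a compactness and Lagrange-multiplier argument showing that $Z_{\mathrm{rd}}^{\mathrm{reg}}$ is the unique minimizer of the constrained optimization.
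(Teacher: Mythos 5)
Your proposal is answering the wrong question. Corollary~\ref{cor:ircr} fixes the \emph{inradius}, $\ir(Z_i)=\ir(Z_i^{\mathrm{reg}})$, but every inequality you write relates $V_k(Z)$ or $\cirr(Z)$ to the \emph{volume} $V_d(Z)$; the chain $V_k(Z_{\mathrm{p}})\ge\binom{d}{k}V_d(Z_{\mathrm{p}})^{k/d}$, and $\cirr(Z_{\mathrm{p}})\ge\frac{\sqrt d}{2}V_d(Z_{\mathrm{p}})^{1/d}$, gives the claimed conclusions only if $V_d(Z_{\mathrm{p}})\ge V_d(Z_{\mathrm{p}}^{\mathrm{reg}})$, i.e., under the hypothesis of Theorem~\ref{thm:volume}, not that of the corollary. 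Nowhere does your argument invoke, estimate, or even mention the inradius, so it cannot establish the stated implication.

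The paper's proof is a short reduction and you are missing its first half. By Theorem~\ref{thm:Bezdek} (Bezdek's cited result, with its equality characterization), $\ir(Z_i)=\ir(Z_i^{\mathrm{reg}})$ implies $V_d(Z_i)\ge V_d(Z_i^{\mathrm{reg}})$, with equality only for a regular one. Choosing $\lambda\le 1$ with $V_d(\lambda Z_i)=V_d(Z_i^{\mathrm{reg}})$ and applying Theorem~\ref{thm:volume} yields $\cirr(Z_i)=\lambda^{-1}\cirr(\lambda Z_i)\ge\cirr(\lambda Z_i)\ge\cirr(Z_i^{\mathrm{reg}})$, and equality forces $\lambda=1$ and hence congruence. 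Your AM--GM plus Han's/Shearer's inequality route is in fact an attractive alternative proof of the $V_k$ part of Theorem~\ref{thm:volume} for parallelotopes (the paper instead uses shadow systems and Steiner symmetrization), but it replaces only the second of the two ingredients; the inradius-to-volume passage of Theorem~\ref{thm:Bezdek} is still required, and you acknowledge that your method does not close the rhombic dodecahedron case even for the volume-constrained statement. One further concrete error: the bound $2\cirr(Z)\ge|p_j|+\sum_{i\ne j}|\langle p_i,p_j\rangle|/|p_j|$ is \emph{not} attained at $Z_{\mathrm{rd}}^{\mathrm{reg}}$ for $d\ge 3$. With unit generators satisfying $\langle q_i,q_j\rangle=-1/d$ it yields only $\cirr\ge 1$, while Remark~\ref{rem:regular_cirr} gives $\cirr(Z_{\mathrm{rd}}^{\mathrm{reg}})=\frac{\sqrt{d+2}}{2}$ (resp.\ $\frac{d+1}{2\sqrt d}$), strictly larger than $1$ once $d\ge 3$.
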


We prove Theorem~\ref{thm:volume} only for $i=\mathrm{rd}$, as the same argument can be used to prove it for $i=\mathrm{p}$. We note that the case $i=\mathrm{p}$ and $k=d-1$ of our theorem was also proved in Theorem 5 of \cite{BM}. Our proof is based on an application of shadow systems \cite{Saroglou} (also called linear parameter systems \cite{RogersShephard}) and Steiner symmetrization.
We start with two lemmas needed for the proof.

\begin{lem}\label{lem:shadow}
Let {\zl $k \geq 2$,} $p_1, \ldots, p_k, v \in \Re^d$, and $\lambda_1, \ldots, \lambda_k \in \Re$. For $i=1,2,\ldots,k$ and all $t \in \Re$, set $p_i(t) = p_i + \lambda_i t v$ and
$S(t) = \conv \{ p_1(t), \ldots, p_k(t)  \}$.
Then the function $f: \Re \to \Re$, $f(t) = \vol_{k-1}(S(t))$ is a convex function of $t$, and if the points $p_1, \ldots, p_k, v$ are affinely independent, then $f$ is strictly convex.
\end{lem}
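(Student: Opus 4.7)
The plan is to translate the simplex volume into the Euclidean norm of an affine, $(k-1)$-vector-valued function of $t$, and then read off convexity from the convexity of a norm. Concretely, using the standard identity
\[
(k-1)!\cdot \vol_{k-1}(S(t)) = \bigl| (p_2(t)-p_1(t)) \wedge \cdots \wedge (p_k(t)-p_1(t)) \bigr|
\]
in the exterior power $\Lambda^{k-1}(\Re^d)$ equipped with the Euclidean inner product inherited from $\Re^d$, I would set $a_i = p_i - p_1$ and $\mu_i = \lambda_i - \lambda_1$ for $i=2,\dots,k$, so that $p_i(t) - p_1(t) = a_i + \mu_i t v$. Then I would expand the wedge product
\[
(a_2 + \mu_2 t v) \wedge \cdots \wedge (a_k + \mu_k t v),
\]
observing that every monomial containing $v$ two or more times vanishes since $v \wedge v = 0$. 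Only the constant term and the terms linear in $t$ survive, giving an affine expression $W(t) = A + tB$ with $A = a_2 \wedge \cdots \wedge a_k$ and
\[
B = \sum_{i=2}^{k} \mu_i\; a_2 \wedge \cdots \wedge \underbrace{v}_{\text{slot }i} \wedge \cdots \wedge a_k.
\]

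The convexity of $f(t) = \frac{1}{(k-1)!}|A + tB|$ then follows immediately: $t \mapsto A + tB$ is affine into the normed space $\Lambda^{k-1}(\Re^d)$, so $f$ is the composition of an affine map with a norm, hence convex by the triangle inequality. Equivalently, $f(t)^2 = |A|^2 + 2t\langle A,B\rangle + t^2 |B|^2$ is a quadratic polynomial with nonnegative leading coefficient, whose square root is convex.

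For the strict convexity clause, I would use the standard fact that $t \mapsto |A + tB|$ is strictly convex if and only if $A$ and $B$ are linearly independent in $\Lambda^{k-1}(\Re^d)$ (an application of the Cauchy--Schwarz inequality to $f^2$ above). To prove that independence under the affine independence of $p_1,\dots,p_k,v$, I would apply the trick of wedging with $v$ to separate $A$ from $B$: every term in $B$ already contains a factor of $v$, so $B \wedge v = 0$; on the other hand, $A \wedge v = a_2 \wedge \cdots \wedge a_k \wedge v$ is a nonzero $k$-vector, because the vectors $a_2,\dots,a_k,v$ are linearly independent, which is precisely the content of the affine-independence assumption. Consequently any linear relation $B = \alpha A$ must force $\alpha = 0$, and then $B \neq 0$ (guaranteed by the nontriviality of the motion, i.e.\ the $\mu_i$'s not all zero) yields the desired independence.

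The argument is short once set up correctly, and the only subtle point I expect is the strict convexity clause: one must promote affine independence of $p_1,\dots,p_k,v$ in $\Re^d$ into linear independence of $A$ and $B$ inside the exterior algebra. The wedge-with-$v$ trick, which annihilates $B$ and detects $A$, is what reduces this exterior-algebra statement to the underlying linear independence hypothesis in $\Re^d$; I anticipate this to be the key step requiring verification.
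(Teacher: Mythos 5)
Your proof is correct and closely parallels the paper's, but packages the same idea more invariantly. The paper reduces to $k=d$, translates so $p_d=o$ and $\lambda_d=0$, and works with the cofactor vector $u(t)$ of $[p_1(t),\dots,p_{d-1}(t)]$, writing $f(t)=\frac{1}{(d-1)!}|u(t)|$ and then computing $f''\ge 0$ explicitly via Cauchy--Schwarz; you stay in $\Lambda^{k-1}(\Re^d)$ for general $k$, observe $W(t)=A+tB$ is affine, and conclude convexity from the fact that a norm of an affine map is convex. Under the Hodge star these are literally the same object when $k=d$, so this is a reformulation rather than a new route, but your version avoids both the dimension reduction and the explicit second-derivative computation. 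The place where you genuinely improve on the paper is the strict-convexity step: your ``wedge with $v$'' trick ($B\wedge v=0$ while $A\wedge v=a_2\wedge\cdots\wedge a_k\wedge v\neq 0$) isolates $A$ from $B$ in one stroke, whereas the paper goes through a two-stage argument proving $u(t)\neq o$ for all $t$ and then $w\neq o$.

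One point you should not gloss over: you write that $B\neq 0$ is ``guaranteed by the nontriviality of the motion, i.e.\ the $\mu_i$'s not all zero,'' but that is \emph{not} among the hypotheses of the lemma. If $\lambda_1=\cdots=\lambda_k$ then all $\mu_i=0$, $B=0$, $S(t)$ is just a translate of $S(0)$, and $f$ is constant; yet $p_1,\dots,p_k,v$ can still be affinely independent, so the strict-convexity claim fails. This is in fact a latent gap in the paper's own proof as well (its argument that $w\neq o$ tacitly uses that some $\lambda_i\neq 0$ after the normalization $\lambda_d=0$, which again amounts to the $\lambda_i$ not all being equal), and it is harmless in the intended application because the shadow system there is nontrivial. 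Still, you should state the needed hypothesis (the $\lambda_i$ are not all equal, equivalently $B\neq 0$) explicitly rather than smuggle it in as an aside, since without it the strict-convexity conclusion is simply false.
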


\begin{proof}
Since $\bigcup_{t \in \Re} S(t)$ is contained in a $k$-dimensional affine subspace of $\Re^d$, without loss of generality, we may assume that $k=d$.
Furthermore, since volume is invariant under translations, {\zl $V_d(S(t))=V_d(S(t)-p_d(t))$, and thus,} we may also assume that $p_d=o$ and $\lambda_d = 0$.
Define the linear functional $L_t : \Re^d \to \Re$, $L_t(x) = \det ([ p_1(t), \ldots, p_{d-1}(t), x])$. This functional can be written in the form
$L_t(x) = \langle u(t), x \rangle$, where $u(t)$ is the vector whose entries are the (signed) minors of size $(d-1) \times (d-1)$ of the $d \times (d-1)$ matrix $[ p_1(t), \ldots, p_{d-1}(t)]$.
Note that by the properties of determinants, $u(t)$ is a linear function of $t$, and hence, it can be written as $u(t) = u+t w$ for some $u,w \in \Re^n$.
Furthermore, $f(t) = \vol_{d-1}(S(t)) = \frac{1}{(d-1)!} | u(t)|$. Thus,
\[
f''(t) = \frac{\langle w,w \rangle \langle u(t), u(t) \rangle - \langle w, u(t) \rangle^2}{(d-1)!|u(t)|^3} \geq 0
\]
by the Cauchy-Schwartz Inequality. This implies the convexity of $f$.

Now, assume that $p_1, \ldots, p_k, v$ are affinely independent. Then, as $p_d=o$, it follows that $p_1, \ldots, p_{d-1}, v$ are linearly independent.
Note that $u(t) \neq o$ for any $t \in \Re$. Indeed, if $u(t)=o$ for some value of $t$, then, by the definition of $L_t(x)$, $p_1(t), \ldots, p_{d-1}(t)$ are linearly dependent for some value of $t$, which yields the existence of a nontrivial linear combination of $p_1, \ldots, p_{d-1}, v$ equal to $o$.
Thus, we have $u(t) \neq o$ for any $t \in \Re$, implying that $u$ is not a scalar multiple of $w$. We show that $w \neq o$. Indeed, suppose for contradiction that $w=o$. Then the value of $L_t(x)$ is independent of $t$ for any value of $x$. In particular, the kernel $\ker1(L_t)$ of $L_t$ is independent of $t$, implying that $p_1, \ldots, p_{d-1}, v \in \ker1(L_t)$. Thus, combined with the fact that $L_t$ is not identically zero, this yields that $p_1, \ldots, p_{d-1},v$ are linearly dependent, contradicting our assumptions. Now, we have shown that $w \neq o$, from which we also have that $u(t)$ and $w$ are not scalar multiples of each other for any value of $t$. From this, the Cauchy-Schwartz Inequality implies that $f''(t) > 0$ for any value of $t$, and hence $f$ is strictly convex.
\end{proof}

\begin{lem}\label{lem:Steiner}
For any simplex $S \subset \Re^d$ and $1 \leq k \leq d-1$, let $\mathcal{F}_k(S)$ denote the family of the $(k-1)$-faces of $S$, and set $f_k(S)= \sum_{F \in \mathcal{F}_k(S)} V_k(\{o \} \cup F)$. Furthermore, if $S= \conv \{ p_1, p_2, \ldots, p_{d+1} \}$, set
\[
g(S) = \max \left\{  \left| \sum_{i=1}^{d+1} \varepsilon_i p_i \right| : \varepsilon_i \in \{ -1,1\}, i=1,2,\ldots, d+1 \right\}.
\]
Let $S_{\mathrm{reg}}$ be a regular simplex centered at $o$. Then, for any $1 \leq k \leq d-1$ and any simplex $S \subset \Re^d$ with $V_d(S) = V_d(S_{\mathrm{reg}})$, we have
\[
f_k(S) \geq f_k(S_{\mathrm{reg}})
\]
with equality if and only if $S$ is a regular simplex centered at $o$. Furthermore, we also have
\[
g(S) \geq g(S_{\mathrm{reg}}).
\]
\end{lem}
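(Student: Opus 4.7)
My plan is to prove the lemma via shadow systems, using the convexity statements in Lemma~\ref{lem:shadow}, together with compactness and a symmetrization argument to force a minimizer to coincide with $S_{\mathrm{reg}}$.

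First I would verify that for any shadow system $p_i(t) = p_i + \lambda_i t v$ on the vertices of $S$, the functions $V_d(S(t))$, $f_k(S(t))$, and $g(S(t))$ are each convex in $t$. For $V_d$ and for each summand $V_k(\conv(\{o\} \cup \{p_i(t) : i \in I\}))$ of $f_k$, this is an immediate application of Lemma~\ref{lem:shadow}, with $o$ regarded as an extra vertex carrying coefficient $\lambda_0 = 0$. For $g$, each term inside the maximum equals $\bigl|\sum_i \varepsilon_i p_i + t\bigl(\sum_i \varepsilon_i \lambda_i\bigr) v\bigr|$, the Euclidean norm of an affine function of $t$ and hence convex, so $g(S(t))$ is convex as a maximum of convex functions. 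A compactness argument (using that under the volume constraint, $f_k(S), g(S) \to \infty$ as any vertex escapes to infinity, since then at least one wedge summand diverges) then produces a minimizer $S^*$ of $f_k$, and likewise of $g$, on the constraint set $\{S : V_d(S) = V_d(S_{\mathrm{reg}})\}$.

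The key step is to show $S^* = S_{\mathrm{reg}}$ up to rotation about $o$. For each pair $i \neq j$, set $F_{ij} = \conv\{p_l^* : l \neq i, j\}$. A direct determinant calculation shows that the pair-swap shadow system $p_i(t) = p_i^* + tv$, $p_j(t) = p_j^* - tv$ (with the other vertices held fixed) preserves $V_d(S(t))$ whenever $v$ is parallel to $\aff F_{ij}$, since then the columns of the relevant determinant matrix become linearly dependent after combining the two affected columns. Along this family $f_k$ is convex, and by the strict-convexity clause of Lemma~\ref{lem:shadow} it is strictly convex on every summand involving both $p_i$ and $p_j$ (for $v$ in general position), so the first-order stationarity condition at $t=0$ must hold for every such $v$. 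Combining these conditions with the ones from the pure-translation shadow system ($\lambda_i \equiv 1$, which automatically preserves $V_d$ and pins the centroid of $S^*$ at $o$) forces $S^*$ to have all edges of equal length and centroid at $o$, that is $S^* = S_{\mathrm{reg}}$ up to rotation. The identical argument with $g$ in place of $f_k$ yields $g(S) \geq g(S_{\mathrm{reg}})$.

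The main obstacle is the final algebraic step: extracting from the collection of stationarity equations that $S^*$ is both regular and centered. Should this direct analysis prove too delicate, a cleaner alternative is to iterate the pair-swap symmetrizations themselves, each of which weakly decreases $f_k$ and $g$ by the above convexity, and strictly decreases them unless $S$ is already symmetric about the corresponding hyperplane; a compactness limit of this iteration is invariant under every pair-swap and must therefore lie in the orbit of $S_{\mathrm{reg}}$ under rotations about $o$. The equality case $f_k(S) = f_k(S_{\mathrm{reg}})$ is obtained by tracing back the strict-convexity clause of Lemma~\ref{lem:shadow}, which forces all the auxiliary vectors to be affinely dependent along the iteration and hence $S$ to already be $S_{\mathrm{reg}}$ at the outset.
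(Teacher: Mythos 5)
Your high-level toolkit — shadow systems, the convexity statements of Lemma~\ref{lem:shadow}, compactness for existence of a minimizer, and symmetrization — matches the paper's, but the symmetrization step you propose is fundamentally different from the one in the paper, and it is exactly there that your argument has a genuine gap. The paper does \emph{not} use pair-swap shadow systems with $v$ parallel to the opposite facet $F_{ij}$. Instead, given a non-regular (or non-centered) $S$, it picks an edge $[p_1,p_2]$ and the hyperplane $H$ through $o$ perpendicular to that edge, and builds a single shadow system $S(t)$ in the direction $v$ \emph{parallel to the edge} (normal to $H$) that moves \emph{all} vertices: the speeds are chosen so that $S(1)=S$, $S(-1)$ is the reflection of $S$ in $H$, $S(0)$ is the Steiner symmetral of $S$ with respect to $H$, and (by the affine dependence of $\det$ on $t$ plus $V_d(S(1))=V_d(S(-1))$) the volume is constant along the whole family. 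Then $t\mapsto f_k(S(t))$ is an even, strictly convex function on $[-1,1]$ by Lemma~\ref{lem:shadow}, so its unique minimum is at $t=0$; hence a minimizer must be symmetric about every such $H$, which forces it to be regular and centered. For $g$ the same shadow system gives only convexity (not strictness), so the paper minimizes lexicographically — first $g$, then $f_1$ among $g$-minimizers — to conclude.

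The concrete problems with your proposal are these. First, the ``main obstacle'' you flag — extracting regularity and centering from the collection of first-order stationarity conditions of the pair-swap systems — is not a minor algebraic cleanup; it is the heart of the argument, and nothing in your sketch shows how those conditions (one for each pair $(i,j)$ and each $v$ in the $(d-2)$-dimensional direction space of $\aff F_{ij}$) combine to give equal edge lengths and centroid at $o$. Second, your fallback of ``iterating the pair-swap symmetrizations'' is not well-defined: since $v$ lies in the direction of the opposite facet rather than along the edge $[p_i,p_j]$, the family $S(t)$ is generally \emph{not} symmetric about any hyperplane for any $t$, and moving only two vertices while fixing the other $d-1$ cannot produce a simplex symmetric about a hyperplane through $o$ except in very special configurations. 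In particular the pair-swap family lacks the crucial evenness property ($f_k(S(-1))=f_k(S(1))$ via reflection) that makes the paper's argument close so cleanly. Third, the claim that ``the identical argument'' handles $g$ overlooks that $g(S(t))$ is only convex, not strictly convex, so you cannot conclude strict decrease; the paper needs the auxiliary lexicographic step with $f_1$ to finish. If you replace your pair-swaps by the paper's edge-direction Steiner-symmetrization shadow system and add the lexicographic step for $g$, your outline becomes a correct proof.
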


\begin{proof}
{\zl Without loss of generality, let $V_d(S_{\mathrm{reg}}) = 1$.}
Clearly, the function $f_k(\cdot)$ attains its minimum value in the family of simplices with {\zl unit} volume. Thus, it is sufficient to show that if $f_k$ is minimal at a simplex $S$ with {\zl $V_d(S) = 1$}, then it is a regular simplex centered at $o$.

Consider a simplex $S$ with $V_d(S) = 1$, which is not a regular simplex centered at $o$.
Let $S= \conv \{p_1, p_2, \ldots, p_{d+1} \}$, and let $H$ be the unique hyperplane perpendicular to the line through the edge $E=[p_1,p_2]$ and passing through $o$. \begin{figure}
  \centering
  \includegraphics[width=10cm]{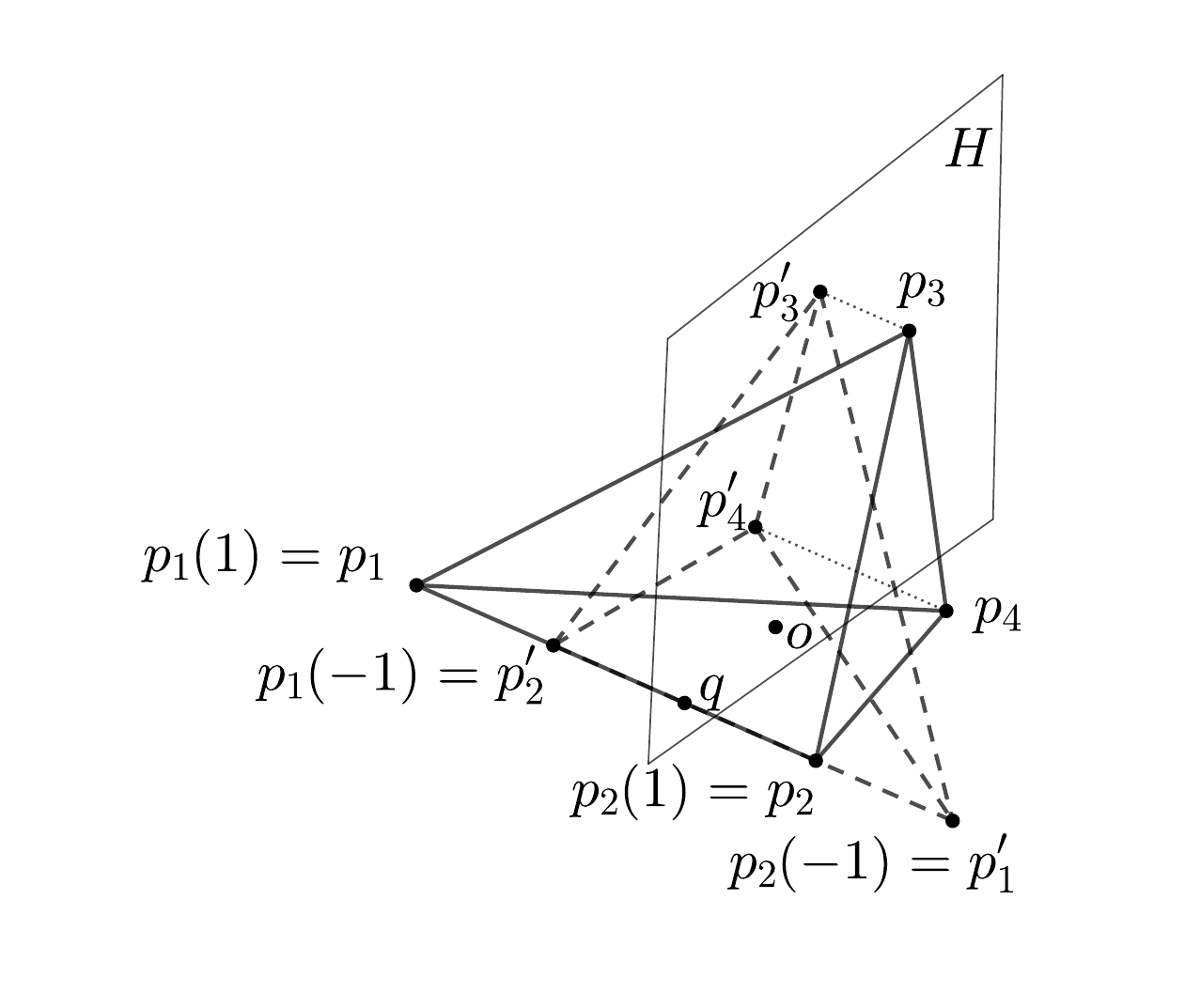}
  \caption{Lemma 2 in case $d=3$.}
\end{figure}
Without loss of generality, assume that $S$ is not symmetric to $H$, and let $S'$ denote the reflected copy of $S$ about $H$.
Let $v$ be a unit normal vector of $H$ and let $\delta_i$ denote the signed distance of $p_i$ from $H$, where the sign is chosen in such a way that distances are positive in the open half space bounded by $H$ and containing $v$. Note that for any value of $i$, $p_i - \delta_i v$ is the orthogonal projection of $p_i$ onto $H$, which we denote by $q_i$, and set $q=q_1=q_2$.
For any $i \geq 3$, let $p_i(t) = q_i + \delta_i t v$, and let $p_1(t) = q + \frac{\delta_1 - \delta_2}{2} v + \frac{\delta_1 + \delta_2}{2} t v$ and
$p_2(t) = q + \frac{\delta_2-\delta_1}{2} v + \frac{\delta_1+\delta_2}{2} t v$. Finally, let $S(t) = \conv \{ p_1(t), \ldots, p_{d+1}(t) \}$.
Clearly, $S(1)=S$, $S(-1)=S'$, and $S(0)$ is the Steiner symmetral of $S$ with respect to $H$, and hence, it is symmetric to $H$.
By Lemma~\ref{lem:shadow}, $t \mapsto f_k(S(t))$ is a strictly convex even function on $[-1,1]$, and hence, it attains a unique minimum at $t=0$. This yields the assertion for $f_k(S)$.

Finally, we consider {\zl the function $g$. Let $\mathcal{F}_{\min}$ denote the subfamily of the family of unit volume simplices that minimize $g$ in this family.} Recall that $f_1(S) = \sum_{i=1}^{d+1} |p_i|$.  Note that $f_1$ attains its minimal value in $\mathcal{F}_{\min}$. Thus, to prove the assertion it is sufficient to show that if $S$ is not a unit volume regular simplex centered at $o$, then there is a unit volume simplex $S'$ such that $g(S') < g(S)$, or $g(S')=g(S)$ and $f_1(S') < f_1(S)$. To do it we use the notation of the previous paragraph. Note that the function $t \mapsto  |\sum_{i=1}^{d+1} \varepsilon_i p_i(t) |$ is convex for any $(\varepsilon_1,\ldots, \varepsilon_{d+1}) \in \{ -1,1,\}^{d+1}$. Since the maximum of convex functions is convex, $g(S(t))$ is an even convex function on $[-1,1]$, implying that $g(S(0)) \leq g(S(1))$. Clearly, since $S$ is not symmetric to $H$, we also have that $f_1(S(0)) < f_1(S(1))$, which yields the statement.
\end{proof}

Now we prove Theorem~\ref{thm:volume}.

\begin{proof}[Proof of Theorem~\ref{thm:volume}]
Let $Z_{\mathrm{rd}}^{\mathrm{reg}} = \sum_{i=1}^{d+1} [o,q_i]$, and set $S_{\mathrm{reg}} = \conv \{ q_1, q_2, \ldots,q_{d+1}\}$. Then $S_{\mathrm{reg}}$ is a regular simplex centered at $o$, and by Corollary~\ref{cor:intrinsic_formula}, $V_d(S_{\mathrm{reg}}) = \frac{1}{d!} {\zl V_d(Z_{\mathrm{rd}}^{\mathrm{reg}})}$. Now, consider any rhombic dodecahedron $Z$ with $V_d(Z) = V_d(Z_{\mathrm{rd}}^{\mathrm{reg}})$. Since the statement in Theorem~\ref{thm:volume} is invariant under translating $Z$, we may assume that $Z = \sum_{i=1}^{d+1} [o,p_i]$, where $o \in S= \conv \{ p_1,p_2, \ldots, p_{d+1} \}$. Clearly, this implies that $S$ is a nondegenerate simplex with volume {\zl $V_d(S_{\mathrm{reg}})$}. Thus, applying Lemma~\ref{lem:Steiner} for $S$ and $S_{\mathrm{reg}}$ readily implies Theorem~\ref{thm:volume}.
\end{proof}

{\zl

By Theorem~\ref{thm:volume} and the convexity of the function $x \mapsto x^{p}$, where $p > 1$, we readily obtain the following.

\begin{rem}
Let $x_1, x_2, \ldots, x_n \in \Re^d$ with $d \leq n \leq d+1$. Then for all $1 \leq k \leq d$,
\[
\left( \frac{\sum_{1 \leq i_1 < \ldots < i_k \leq n} |x_{i_1} \wedge x_{i_2} \wedge \ldots \wedge x_{i_k} |}{ \binom{n}{k}}\right)^{\frac{1}{k}} \geq
\left( \frac{\sum_{1 \leq i_1 < \ldots < i_d \leq n} |x_{i_1} \wedge x_{i_2} \wedge \ldots \wedge x_{i_d} |}{ \binom{n}{d}}\right)^{\frac{1}{d}},
\]
with equality if and only if $n=d$ and the vectors are pairwise orthogonal and are of equal length, or if the linear hull of the $x_i$ is of dimension at most $k-1$.
Furthermore, for any $p > 1$, we have
\[
\left( \frac{\sum_{1 \leq i_1 < \ldots < i_k \leq d} |x_{i_1} \wedge x_{i_2} \wedge \ldots \wedge x_{i_k} |^p}{ \binom{n}{k}}\right)^{\frac{1}{pk}} \geq
|x_{1} \wedge x_{2} \wedge \ldots \wedge x_{d} |^{\frac{1}{d}},
\]
with equality if and only if the vectors are pairwise orthogonal and are of equal length, or if the linear hull of the $x_i$ is of dimension at most $k-1$.
\end{rem}

For any simplex $S \subset \Re^d$, integer $1 \leq k \leq d-1$, and real number $m \in \Re$, let $g_k^m(S)$ denote the sum of the $m$th powers of the volumes of the $k$-faces of $S$. Tanner \cite{Tanner} showed that if $2 \leq k \leq d-1$ and $m \in (0,2]$, among simplices $S$ in $\Re^n$ with $g_1^2(S)$ fixed, $g_k^m(S)$ is maximal if and only if $S$ is regular. Our last result in this subsection is Proposition~\ref{prop:Steiner}, which can be regarded as a complement of the above result of Tanner. This result follows by a natural modification of the proof of Lemma~\ref{lem:Steiner}, and the observation that the composition of convex functions is convex.

\begin{prop}\label{prop:Steiner}
Let $1 \leq k \leq d-1$ be an integer, and $m \geq 1$ be a real number. Then, for any simplex $S \subset \Re^d$ with unit volume, $g_k^m(S)$ is maximal if and only if $S$ is regular.
\end{prop}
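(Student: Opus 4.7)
The plan is to transfer the shadow-system argument of Lemma~\ref{lem:Steiner} from $f_k$ to $g_k^m$, exploiting convexity of the power map $x \mapsto x^m$ on $[0,\infty)$ for $m \geq 1$. Fix a unit-volume simplex $S = \conv \{ p_1, \ldots, p_{d+1} \}$ that is not regular. I would choose an edge $[p_1, p_2]$ across whose perpendicular bisector hyperplane $H$ the simplex $S$ fails to be mirror symmetric, and then build the one-parameter family $S(t) = \conv \{ p_1(t), \ldots, p_{d+1}(t) \}$ exactly as in the proof of Lemma~\ref{lem:Steiner}, so that $S(1) = S$, $S(-1)$ is the reflection of $S$ through $H$, and $S(0)$ is the Steiner symmetral of $S$ across $H$. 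By Lemma~\ref{lem:shadow}, for each $k$-face $F_j$ of $S$ the corresponding face $F_j(t) \subset S(t)$ has $V_k(F_j(t))$ a convex function of $t$, and the total $d$-volume $V_d(S(t))$ is constant in $t$ by expansion of the relevant determinant along the shadow direction.

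Next, the ``natural modification'' advertised immediately before the statement enters: since $m \geq 1$, the map $x \mapsto x^m$ is convex and nondecreasing on $[0,\infty)$, so each composition $t \mapsto V_k(F_j(t))^m$ is convex on $\Re$; summing finitely many convex functions gives convexity of $t \mapsto g_k^m(S(t))$ on $[-1,1]$. Because $S(-t)$ is congruent to $S(t)$ via reflection through $H$ (after relabeling vertices) and $g_k^m$ is a sum of isometry invariants of individual faces, the function is even, and therefore attains its minimum at $t = 0$; the strict convexity clause in Lemma~\ref{lem:shadow}, applied to any $k$-face whose defining vertices together with the shadow direction are affinely independent, makes this minimum strict whenever $S$ is not already mirror symmetric across $H$. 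Thus $g_k^m(S(0)) < g_k^m(S)$ with $V_d(S(0)) = 1$; iterating the symmetrization across a dense family of such perpendicular-bisector hyperplanes and passing to a cluster point in the compact space of unit-volume simplices modulo rigid motions identifies the unique extremizer as the simplex invariant under all these reflections, i.e.\ the regular one.

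The main difficulty I want to flag is interpretational rather than computational: the convexity mechanism above pins the regular simplex as the unique \emph{minimizer} of $g_k^m$ on unit-volume simplices, and in fact $g_k^m$ is easily seen to be unbounded above on this class (a long, thin simplex with fixed volume can make any chosen $k$-face arbitrarily large). I therefore read ``maximal'' in the statement of Proposition~\ref{prop:Steiner} as a typographical slip for ``minimal'', which is the only reading consistent with the cited proof mechanism (composition of convex functions) and with the paper's framing of the result as a ``complement'' of Tanner's theorem, where the roles of the fixed and the extremized quantities are reciprocally interchanged.
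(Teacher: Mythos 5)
Your argument is precisely the one the paper intends: the authors give no proof beyond the remark that the proposition ``follows by a natural modification of the proof of Lemma~\ref{lem:Steiner}, and the observation that the composition of convex functions is convex,'' and your execution --- convexity of each $t \mapsto V_k(F_j(t))$ along the shadow system of Lemma~\ref{lem:shadow}, composed with the nondecreasing convex map $x \mapsto x^m$ for $m \geq 1$, summed over the $k$-faces, and combined with evenness in $t$ --- is exactly that modification. You are also correct about the direction of the extremum: the mechanism shows that Steiner symmetrization weakly decreases $g_k^m$ at fixed volume, so the regular simplex is the unique \emph{minimizer}; since $g_k^m$ is unbounded above on unit-volume simplices (long thin simplices), ``maximal'' in the statement can only be read as ``minimal,'' and the case $d=2$, $k=1$, $m=2$ (Weitzenb\"ock's inequality) confirms this reading. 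Two points to tighten. First, $V_d(S(t))$ is a priori only an affine function of $t$ (the multilinear expansion of the determinant kills the quadratic and higher terms, not the linear one); it is constant because $S(-t)$ is congruent to $S(t)$, so this affine function is even. Second, your closing step --- iterating symmetrizations over a dense family of hyperplanes and passing to a cluster point --- is weaker than what you need: the space of unit-volume simplices modulo rigid motions is \emph{not} compact, and convergence of iterated symmetrals requires its own argument. Lemma~\ref{lem:Steiner} closes variationally instead: one first argues that the minimum is attained, and then that any non-regular simplex admits a strictly improving symmetrization (strictness supplied by Lemma~\ref{lem:shadow} applied to some $k$-face whose direction space does not contain the shadow direction, which always exists since $k \leq d-1$), whence a minimizer is symmetric about the perpendicular bisector of each of its edges and is therefore regular. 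You should adopt that structure for the conclusion.
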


}

\subsection{The circumradius of a zonotope with a given mean width}\label{subsec:rh_cr_mw}

Our aim in this section is to find the parallelotopes or rhombic dodecahedra $Z$ of a given mean width $w(Z)$ that have minimal circumradius.
For parallelotopes, we have the following theorem.

\begin{thm}\label{thm:para_cirr}
If $Z \in \mathcal{Z}_{\mathrm{p}}$ satisfies $w(Z) = w(Z_{\mathrm{p}}^{\mathrm{reg}})$, then $\cirr(Z) \geq \cirr(Z_{\mathrm{p}}^{\mathrm{reg}})$, with equality if and only if $Z$ is a cube.
\end{thm}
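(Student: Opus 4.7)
The plan is to reduce the statement to a short inequality about $d$ vectors spanning a parallelotope. Up to translation write $Z = \sum_{i=1}^d [o,p_i]$ with $p_1,\dots,p_d$ linearly independent, and normalize so that the cube $Z_{\mathrm p}^{\mathrm{reg}}$ has edge length $a$. By Remark~\ref{rem:meanwidth} the hypothesis $w(Z)=w(Z_{\mathrm p}^{\mathrm{reg}})$ is equivalent to
\[
\sum_{i=1}^d |p_i| \;=\; d\,a,
\]
and by (\ref{eq:cirr}) we have $2\cirr(Z)=\max_{\varepsilon \in \{-1,1\}^d}\bigl|\sum_{i=1}^d\varepsilon_i p_i\bigr|$, while $2\cirr(Z_{\mathrm p}^{\mathrm{reg}})=a\sqrt{d}$. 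So it suffices to prove
\[
\max_{\varepsilon\in\{-1,1\}^d}\Bigl|\sum_{i=1}^d\varepsilon_i p_i\Bigr|^2 \;\geq\; d\,a^2,
\]
with equality precisely when $Z$ is a cube.

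The key step is an averaging identity. Expanding the square and summing over all $2^d$ sign patterns makes every cross term $\varepsilon_i\varepsilon_j\langle p_i,p_j\rangle$ cancel, so
\[
\frac{1}{2^d}\sum_{\varepsilon\in\{-1,1\}^d}\Bigl|\sum_{i=1}^d\varepsilon_i p_i\Bigr|^2 \;=\; \sum_{i=1}^d |p_i|^2.
\]
Hence the maximum over $\varepsilon$ is at least $\sum_{i=1}^d|p_i|^2$. Combined with the QM--AM (equivalently, Cauchy--Schwarz) bound
\[
\sum_{i=1}^d|p_i|^2 \;\geq\; \frac{1}{d}\Bigl(\sum_{i=1}^d |p_i|\Bigr)^2 \;=\; d\,a^2,
\]
this yields $2\cirr(Z)\geq a\sqrt{d}=2\cirr(Z_{\mathrm p}^{\mathrm{reg}})$, which is the desired inequality.

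For the equality case, the QM--AM step forces $|p_1|=\dots=|p_d|=a$. Given this, equality in the ``max $\geq$ average'' step requires $\bigl|\sum\varepsilon_i p_i\bigr|^2$ to be the same for every $\varepsilon$; expanding, this forces every cross term $\langle p_i,p_j\rangle$ ($i\neq j$) to vanish, so $p_1,\dots,p_d$ are pairwise orthogonal vectors of common length $a$, i.e.\ $Z$ is a cube. There is no real obstacle here; the only care needed is checking the equality analysis, and verifying that the vertices of the symmetrized parallelotope $\tfrac12\sum[-p_i,p_i]$ are exactly $\{\tfrac12\sum\varepsilon_i p_i\}$ (which uses the linear independence inherent in the parallelotope being full-dimensional), so that the max over sign patterns really computes the circumradius.
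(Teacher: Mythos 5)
Your proof is correct and follows essentially the same route as the paper: average the quantity $\bigl|\sum_i\varepsilon_i p_i\bigr|^2$ over all sign patterns to get $\sum_i|p_i|^2$, bound the maximum by this average, then apply QM--AM and the mean-width normalization from Remark~\ref{rem:meanwidth}. The only difference is cosmetic: you spell out the equality analysis (all sign-sums equal forces the cross terms to vanish) where the paper states the conclusion of that step more tersely.
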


\begin{proof}
By (\ref{eq:cirr}),
\[
4 \cdot 2^d \cdot \left( \cirr(Z) \right)^2 \geq \sum_{(\varepsilon_1, \ldots, \varepsilon_d) \in \{ -1,1\}^d} \left| \sum_{i=1}^d \varepsilon_i p_i  \right|^2 =
2^d \sum_{i=1}^d | p_i|^2,
\]
with equality if and only if the vectors $p_i$ are pairwise orthogonal. On the other hand,
\[
\sqrt{\frac{\sum_{i=1}^d | p_i|^2}{d}} \geq \frac{\sum_{i=1}^d |p_i|}{d},
\]
with equality if and only if all $p_i$ are of equal length. By Remark~\ref{rem:meanwidth}, this yields Theorem~\ref{thm:para_cirr}.
\end{proof}

Next, we examine $\mathcal{Z}_{\mathrm{rd}}$. We prove that in the family of rhombic dodecahedra having a centered canonical form, the ones with a given mean width and minimal circumradius are regular, whereas in the family of all rhombic dodecahedra this statement is not true. Recall that a zonotope $\sum_{i=1}^n [o,p_i]$ is given in a centered canonical form if $\sum_{i=1}^n p_i = o$.

We start with a simple observation.

\begin{rem}\label{rem:regular_cirr}
Let $Z_{\mathrm{rd}}^{\mathrm{reg}} = \sum_{i=1}^{d+1} [o,q_i]$ with $q_i \in \Sph^{d-1}$ for all values of $i$. Then
\[
\cirr(Z_{\mathrm{rd}}^{\mathrm{reg}})=\left\{
\begin{array}{cc}
  \frac{\sqrt{d+2}}{2}&\text{if $d$ is even,} \\
  \frac{d+1}{2\sqrt{d}}&\text{if $d$ is odd.} \\
\end{array}\right.
\]
Furthermore, for some $\varepsilon_i \in \{ -1,1\}$, $i=1,2,\ldots,d+1$ we have $\frac{1}{2} | \sum_{i=1} \varepsilon_i q_i | = \cirr(Z_{\mathrm{rd}}^{\mathrm{reg}})$ if and only if the number of the $\varepsilon_i$ equal to $-1$ is $\lfloor \frac{d+1}{2} \rfloor$ or $\lceil \frac{d+1}{2} \rceil$.
\end{rem}

\begin{proof}
Clearly, the value of $| \sum_{i=1}^{d+1} \varepsilon_i p_i |$ depends only on the numbers of the positive and negative coefficients. Thus, we may assume that
there is some $0 <k \leq d+1$ such that $\varepsilon_1 = \ldots = \varepsilon_k = 1$ and $\varepsilon_{k+1} = \ldots = \varepsilon_{d+1} = -1$. Furthermore, observe that $\sum_{i=1}^{d+1} p_i = o$ implies that $\frac{1}{2} | \sum_{i=1}^k p_i - \sum_{i=k+1}^{d+1} p_i | = |\sum_{i=1}^k p_i |$.
By the same fact and since the value of $\langle p_i, p_j \rangle$ is independent of $i,j$ for all $i \neq j$, we also have $\langle p_i,p_j\rangle=-\frac{1}{d}$ for any $i \neq j$.
Thus, an elementary computation yields that
\[
|p_1+\ldots+p_k|^2=\langle p_1+\ldots+p_k,p_1+\ldots+p_k\rangle = k + 2 \binom{k}{2} \left( - \frac{1}{d} \right) = \frac{k(d+1-k)}{d}.
\]
As $k$ is an integer, from this we have that the maximum of $|p_1+\ldots+p_k|$ is attained if and only if $k= \lfloor \frac{d+1}{2} \rfloor$ or $k=\lceil \frac{d+1}{2} \rceil$.
\end{proof}

We note that Theorem~\ref{thm:rh_cirr} in the special case that $d$ is even and all $p_i$ are unit vectors is proved in \cite[Theorem 6]{AM} based on a different idea.

\begin{thm}\label{thm:rh_cirr}
Let $d \geq 2$ and let $Z_{\mathrm{rd}}^{\mathrm{reg}} = \sum_{i=1}^{d+1} [o,q_i]$. Then for any $Z = \sum_{i=1}^{d+1} [o,p_i] \in \mathcal{Z}_{\mathrm{rd}}$ having a centered canonical form with $w(Z)=w(Z_{\mathrm{rd}}^{\mathrm{reg}})$, we have
\begin{equation}\label{eq:eq:rh_cirr}
\cirr(Z) \geq \cirr(Z_{\mathrm{rd}}^{\mathrm{reg}}),
\end{equation}
with equality if and only if $Z$ is congruent to $Z_{\mathrm{rd}}^{\mathrm{reg}}$. Furthermore, if $d$ is odd, then there is a rhombic dodecahedron $Z' = \sum_{i=1}^{d+1} [o,p'_i]$ with $w(Z') = w(Z_{\mathrm{rd}}^{\mathrm{reg}})$ and $\cirr(Z') < \cirr(Z_{\mathrm{rd}}^{\mathrm{reg}})$.
\end{thm}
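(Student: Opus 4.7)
The plan is to prove the two parts of the statement separately: the main inequality with its equality case, via an averaging argument over sign patterns; and the existence, for $d$ odd, of a rhombic dodecahedron $Z'$ strictly beating $Z_{\mathrm{rd}}^{\mathrm{reg}}$, via an explicit prism construction.

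For the inequality, I would use the centered form $\sum p_i = o$ to write $\sum\varepsilon_i p_i = 2\sum_{i\in A} p_i$ with $A=\{i:\varepsilon_i=1\}$, and bound the maximum in (\ref{eq:cirr}) from below by the average over those $\varepsilon$ with exactly $k$ plus signs:
\[
4\cirr(Z)^2 \;\ge\; \frac{4}{\binom{d+1}{k}}\sum_{|A|=k}\Bigl|\sum_{i\in A}p_i\Bigr|^2.
\]
A short combinatorial expansion, together with the identity $\sum_{i\ne j}\langle p_i,p_j\rangle = -\sum_i|p_i|^2$ (coming from $\sum p_i=o$), reduces the right-hand side to $\frac{4k(d+1-k)}{d(d+1)}\sum_i|p_i|^2$, and Cauchy--Schwarz together with $\sum|p_i|=d+1$ then gives $\cirr(Z)^2\ge k(d+1-k)/d$. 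Taking $k=\lfloor(d+1)/2\rfloor$ or $\lceil(d+1)/2\rceil$ makes the right-hand side equal to $\cirr(Z_{\mathrm{rd}}^{\mathrm{reg}})^2$ by Remark~\ref{rem:regular_cirr}.

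Equality in (\ref{eq:eq:rh_cirr}) forces $|p_i|=1$ for all $i$ (from Cauchy--Schwarz) and $|\sum_{i\in A}p_i|^2$ constant over $|A|=k$ (from the averaging), equivalently $\sum_{i<j\in A}\langle p_i,p_j\rangle$ constant on $k$-subsets. For $k=2$ this is immediately the constancy of $\langle p_i,p_j\rangle$; for larger $k$ I plan a two-stage swap argument: comparing $k$-subsets $A$ and $A'=A\setminus\{i\}\cup\{j\}$ shows that $\sum_{r\in B}(\langle p_i,p_r\rangle-\langle p_j,p_r\rangle)=0$ for every $(k-1)$-subset $B\subset\{1,\ldots,d+1\}\setminus\{i,j\}$; then swapping one element of $B$ (which is possible because $d-1>k-1$ for the optimal $k$ once $d\geq 3$) shows that $\langle p_i,p_r\rangle-\langle p_j,p_r\rangle$ is independent of $r$, hence zero. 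A final bookkeeping plus $\sum p_i=o$ fix the common pairwise inner product at $-1/d$, so $Z$ is congruent to $Z_{\mathrm{rd}}^{\mathrm{reg}}$.

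For the counterexample with $d$ odd, let $q_1,\dots,q_d$ be the vertices of a regular $(d-1)$-simplex centered at $o$ in the hyperplane $\{x_d=0\}$, and set $Z'=\sum_{i=1}^d[o,aq_i]+[o,be_d]$ with $a,b>0$. The mean-width condition becomes $ad+b=d+1$. Since the first $d$ segments lie in $\{x_d=0\}$ and the last is orthogonal to it, $Z'$ is a Cartesian product, and because $d-1$ is even Remark~\ref{rem:regular_cirr} gives $\cirr(Z')^2=\bigl(a^2(d+1)+b^2\bigr)/4$. Minimising $a^2(d+1)+b^2$ subject to $ad+b=d+1$ is a one-variable optimisation yielding $a=d(d+1)/(d^2+d+1)$, $b=(d+1)^2/(d^2+d+1)$, and the ratio $\cirr(Z')^2/\cirr(Z_{\mathrm{rd}}^{\mathrm{reg}})^2=d(d+1)/(d^2+d+1)<1$, which completes the proof. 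The main obstacle is the swap argument in the equality case; the inequality itself is a one-shot averaging estimate, and the counterexample reduces to an explicit one-parameter minimisation.
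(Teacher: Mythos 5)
Your proof is correct, and while the main inequality follows the same averaging strategy as the paper (averaging $|\sum_{i\in A}p_i|^2$ over size-$k$ subsets $A$ is exactly the paper's average over $\varepsilon\in\mathcal E=\{\varepsilon:\sum\varepsilon_i=0\}$ when $d$ is odd), your treatments of the equality case and of the counterexample are genuinely different from the paper's.

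For the equality case, the paper argues geometrically: from the constancy of $|\sum_{j\in J}p_j|$ over $m$-subsets $J$, it shows that $\conv\{p_1,\dots,p_{d+1}\}$ is symmetric with respect to the perpendicular bisector hyperplane of each edge $[p_i,p_j]$, and concludes the simplex is regular. Your swap argument is purely combinatorial: it extracts from the same subset-constancy the relations $\langle p_i,p_r\rangle=\langle p_j,p_r\rangle$ for all distinct $i,j,r$, hence a common inner product $c$, which $\sum p_i=o$ and $|p_i|=1$ pin down to $c=-1/d$. This is a sound alternative. One small clarification worth making: the swap (replacing one element of $B$) needs $k<d$, which holds for $k=\lceil(d+1)/2\rceil$ whenever $d\ge 3$; for $d=2$ with $k=2$ the set $B$ is a singleton and the constancy $f(r_0)=0$ is immediate, so no swap is needed.

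For the counterexample, the paper perturbs: it replaces $q_i$ by $q_i+v$, observes (via Remark~\ref{rem:regular_cirr} and $\sum\varepsilon_i=0$ for the extremal sign patterns when $d$ is odd) that the circumradius is unchanged for small $|v|$, shows the mean width strictly increases because $o$ is the unique Fermat point of the regular simplex, and rescales. Your construction is explicit: a prism $Z'=\sum_{i=1}^d[o,aq_i]+[o,be_d]$ over a regular $(d-1)$-dimensional rhombic dodecahedron. Since $d-1$ is even, Remark~\ref{rem:regular_cirr} and the fact that circumradius is additive in squares over orthogonal Cartesian factors give $\cirr(Z')^2=(a^2(d+1)+b^2)/4$; optimizing subject to $ad+b=d+1$ gives the clean ratio $\cirr(Z')^2/\cirr(Z_{\mathrm{rd}}^{\mathrm{reg}})^2=d(d+1)/(d^2+d+1)<1$. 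This is more constructive and yields a quantitative improvement, at the cost of relying on $Z'$ being a Cartesian product (note $Z'$ is not in centered canonical form, which is exactly what the theorem requires of the counterexample). Both routes are valid; yours is arguably the more informative of the two.
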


\begin{proof}
Let $Z= \sum_{i=1}^{d+1} [o,p_i]$ with $w(Z)=w(Z_{\mathrm{rd}}^{\mathrm{reg}})$. Without loss of generality, we assume that $\sum_{i=1}^{d+1} |q_i| = d+1$, which by Remark~\ref{rem:meanwidth} implies that $\sum_{i=1}^{d+1} |p_i| = d+1$.
First, we prove the inequality $\cirr(Z) \geq \cirr(Z_{\mathrm{rd}}^{\mathrm{reg}})$ for any $Z \in \mathcal{Z}_{\mathrm{rd}}$ with $w(Z)=w(Z_{\mathrm{rd}}^{\mathrm{reg}})$ and having a centered canonical form.
To do it, we prove an inequality valid for any rhombic dodecahedron with $w(Z)=w(Z_{\mathrm{rd}}^{\mathrm{reg}})$. On the other hand, for simplicity we derive this formula only in the special case that $d$ is odd; if $d$ is even, the same argument can be applied with a slightly different computation.
In the proof, we denote the coordinates of any $\varepsilon \in \{ -1,1 \}^{d+1}$ by $\varepsilon = (\varepsilon_1,\ldots,\varepsilon_{d+1})$, and set $\mathcal{E} = \left\{ \varepsilon \in \{ -1,1\}^{d+1} : \sum_{i=1}^{d+1} \varepsilon_i = 0 \right\}$.

Assume that $d = 2m-1$. Then we clearly have
\begin{equation}\label{eq:rh_cirr1}
2 \cirr(Z) \geq \max \left\{ \left| \sum_{i=1}^{d+1} \varepsilon_i p_i \right| : \varepsilon \in \mathcal{E} \right\} \geq \sqrt{ \frac{1}{\binom{d+1}{m}} \sum_{\varepsilon \in \mathcal{E}} \left| \sum_{i=1}^{d+1} \varepsilon_i p_i \right|^2 }.
\end{equation}
Furthermore,
\begin{equation}\label{eq:rh_cirr2}
\sum_{\varepsilon \in \mathcal{E}} \left| \sum_{i=1}^{d+1} \varepsilon_i p_i \right|^2 = \sum_{\varepsilon \in \mathcal{E}} \left\langle \sum_{i=1}^{d+1} \varepsilon_i p_i, \sum_{i=1}^{d+1} \varepsilon_i p_i \right\rangle = \binom{d+1}{m}\sum_{i=1}^{d+1} |p_i|^2 + 2 \sum_{\varepsilon \in \mathcal{E}} \sum_{1 \leq i < j \leq d+1} \varepsilon_i \varepsilon_j \langle p_i, p_j \rangle.
\end{equation}
An elementary computation shows that for any $i < j$, the number of the elements $\varepsilon \in \mathcal{E}$ with the property that $\varepsilon_i = \varepsilon_j$ is $2 \binom{2m-2}{m-2}$ and the number of elements with $\varepsilon_i = - \varepsilon_j$ is
$2 \binom{2m-2}{m-1}$.
As $2 \binom{2m-2}{m-2} - 2 \binom{2m-2}{m-1} = -\frac{1}{2m-1} \binom{2m}{m} = - \frac{1}{d} \binom{d+1}{m}$, this implies that
\begin{equation}\label{eq:rh_cirr3}
\sum_{\varepsilon \in \mathcal{E}} \sum_{1 \leq i < j \leq d+1} \varepsilon_i \varepsilon_j \langle p_i, p_j \rangle = -\frac{1}{d} \binom{d+1}{m} \sum_{1 \leq i < j \leq d+1} \langle p_i, p_j \rangle.
\end{equation}
On the other hand,
\[
\sum_{1 \leq i < j \leq d+1} \langle p_i, p_j \rangle = \frac{1}{2} \left( \left\langle \sum_{i=1}^{d+1} p_i, \sum_{j=1}^{d+1} p_j \right\rangle - \sum_{i=1}^{d+1} |p_i|^2 \right) = \frac{1}{2} \left( |p|^2 - \sum_{i=1}^{d+1} |p_i|^2 \right),
\]
where $p = \sum_{i=1}^{d+1} p_i$.
Substituting it back into (\ref{eq:rh_cirr3}) and applying (\ref{eq:rh_cirr2}) and (\ref{eq:rh_cirr1}), we obtain that
\[
2 \cirr(Z) \geq \sqrt{ \frac{d+1}{d} \sum_{i=1}^{d+1} |p_i|^2 - \frac{1}{d} |p|^2 }.
\]

Now, if $Z$ is given in a centered canonical form, then $p=o$, which implies, together with the inequality between the arithmetic and the quadratic means, that
\[
\cirr(Z) \geq \frac{1}{2} \sqrt{\frac{d+1}{d} \sum_{i=1}^{d+1} |p_i|^2 } \geq \frac{d+1}{2 \sqrt{d}} \cdot \frac{\sum_{i=1}^{d+1} |p_i|}{d+1} \geq \frac{d+1}{2 \sqrt{d}}.
\]
This, combined with Remark~\ref{rem:regular_cirr}, yields the inequality in (\ref{rem:regular_cirr}). Assume that we have equality in (\ref{rem:regular_cirr}). Then, by the inequality between the arithmetic and the quadratic means, $|p_i| = 1$ for all values of $i$. Furthermore, by (\ref{eq:rh_cirr3}), for any $\varepsilon \in \mathcal{E}$ we have $\left| \sum_{i=1}^{d+1} \varepsilon_i p_i \right| = \frac{d+1}{ \sqrt{d}}$, or equivalently, for any subset $J \subset \{ 1,2,\ldots, d+1 \}$ consisting of $m$ indices, we have that $\left| \sum_{j \in J} p_j \right| = \frac{d+1}{ 2\sqrt{d}}$.

Consider an arbitrary $J \subset \{ 1,2, \ldots, d+1 \}$ such that $|J| = m-1$ and $1,2, \notin J$. Then, by the previous observation, $\left| p_1 + \sum_{j \in J} p_j \right|^2 = \left| p_2 + \sum_{j \in J} p_j \right|^2$. Computing both sides and using the fact that $|p_1|=|p_2|=1$, we obtain that $\left\langle p_1, \sum_{j \in J} p_j \right\rangle = \left\langle p_2, \sum_{j \in J} p_j \right\rangle$, which yields also that $\left| p_1 - \sum_{j \in J} p_j \right| = \left| p_2 - \sum_{j \in J} p_j \right|$. Thus, $\sum_{j \in J} p_j$ lies in the bisector $H_{12}$ of the segment $[p_1,p_2]$. Note that $o \in H_{12}$ follows from the fact that $|p_1|=|p_2|$. Observe that our conditions imply that for any $i,j \geq 3$, we have $p_i - p_j \in H_{12}$. Thus, if some $p_j$ with $j \geq 3$, say $p_3$ does not lie in $H_{12}$, then all $p_j$ lie in the hyperplane $p_3 + H_{12}$. This yields that for any $J$ satisfying the conditions, $\sum_{j \in J} p_j \in (m-1)p_3 + H_{12} \neq H_{12}$, a contradiction. Hence, we have that $p_j \in H_{12}$ for all $j \geq 3$, implying that $\conv \{ p_1, p_2, \ldots, p_{d+1} \}$ is symmetric to the hyperplane $H_{12}$. Repeating the same argument for any pair of indices in place of $1$ and $2$, we have that $S=\conv \{ p_1, p_2, \ldots, p_{d+1} \}$ is symmetric to any hyperplane through $o$ and a $(d-2)$-face of $S$. This yields that $S$ is a regular simplex centered at $o$, or in other words, $Z$ is a regular rhombic dodecahedron.

Now we show that for $d$ odd, if we drop the condition that $Z$ is centered, then, among rhombic dodecahedra with a given mean width, non-regular ones have minimal circumradius. Consider the regular rhombic dodecahedron $Z_{\mathrm{rd}}^{\mathrm{reg}} = \sum_{i=1}^{d+1} [o,q_i]$, and let $v \neq o$ be an arbitrary vector. Let
$\bar{Z} = \sum_{i=1}^{d+1} [o,q_i+v]$. By Remark~\ref{rem:regular_cirr}, if $|v|$ is sufficiently small, then
\[
\cirr(\bar{Z}) = \left| \sum_{i=1}^{d+1} \varepsilon_i (q_i+v) \right|, \varepsilon \in \{ -1,1\}^{d+1}
\]
implies that $\sum_{i=1}^{d+1} \varepsilon_i = 0$. As $d$ is odd, from this we have that $\sum_{i=1}^{d+1} \varepsilon_i (q_i+v) = \sum_{i=1}^{d+1} \varepsilon_i q_i$, which yields that $\cirr (\bar{Z}) = \cirr(Z_{\mathrm{reg}})$.

We show that $w(\bar{Z}) > w(Z_{\mathrm{rd}}^{\mathrm{reg}})$. To do it it is sufficient to show that the unique point $x \in S_{\mathrm{reg}} = \conv \{ q_1, q_2, \ldots, q_{d+1} \}$ for which the function $\sum_{i=1}^{d+1} |x-q_i|$ is minimal is $o$. To prove it, let $h$ denote the length of the altitudes of $S_{\mathrm{reg}}$, and for $i=1,2,\ldots,d+1$, let $h_i$ denote the distance of $x$ from the facet of $S_{\mathrm{reg}}$ opposite of $q_i$. It is well known that for any $x \in S_{\mathrm{reg}}$, $\sum_{i=1}^{d+1} h_i = h$. Thus, for any $x \in S_{\mathrm{reg}}$, we have
\[
\sum_{i=1}^{d+1} |x-q_i| \geq \sum_{i=1}^{d+1} (h-h_i) = dh,
\]
with equality if and only if $x$ lies on all altitudes of $S_{\mathrm{reg}}$, or in other words, if $x=o$.

We have shown that $\cirr (\bar{Z}) = \cirr(Z_{\mathrm{rd}}^{\mathrm{reg}})$ and $w(\bar{Z}) > w(Z_{\mathrm{reg}})$. Thus, if we set $t = \frac{w (Z_{\mathrm{rd}}^{\mathrm{reg}}) }{w(\bar{Z})}$, then for $Z'=tZ$ we have $w(Z') = w(Z_{\mathrm{rd}}^{\mathrm{reg}})$ and $\cirr(Z') < \cirr(Z_{\mathrm{rd}}^{\mathrm{reg}})$, which proves the last statement of Theorem~\ref{thm:rh_cirr}.
\end{proof}

\subsection{The second intrinsic volume of a zonotope with a given mean width}\label{subsec:rh_mw_V2}

We start with a simple observation for parallelotopes.

\begin{prop}\label{prop:second_p}
For any $Z \in \mathcal{Z}_{\mathrm{p}}$ with $w(Z) = w(Z_{\mathrm{p}}^{\mathrm{reg}})$, we have $V_2(Z) \leq V_2(Z_{\mathrm{p}}^{\mathrm{reg}})$, with equality if and only if $Z$ is a cube.
\end{prop}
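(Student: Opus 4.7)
The plan is to reduce everything to two standard inequalities by combining Corollary~\ref{cor:intrinsic_formula} with Remark~\ref{rem:meanwidth}.

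Write $Z = \sum_{i=1}^d [o,p_i]$ and $Z_{\mathrm{p}}^{\mathrm{reg}} = \sum_{i=1}^d [o,q_i]$ with the $q_i$ pairwise orthogonal of common length $r>0$. By Corollary~\ref{cor:intrinsic_formula},
\[
V_2(Z) = \sum_{1\le i<j\le d} |p_i \wedge p_j|, \qquad V_2(Z_{\mathrm{p}}^{\mathrm{reg}}) = \binom{d}{2} r^2.
\]
By Remark~\ref{rem:meanwidth}, the mean width constraint $w(Z)=w(Z_{\mathrm{p}}^{\mathrm{reg}})$ is equivalent to $\sum_{i=1}^d |p_i| = d\,r$.

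Next I would apply two elementary bounds. First, for each pair, $|p_i \wedge p_j| \le |p_i||p_j|$, with equality if and only if $p_i \perp p_j$ (or one of the two vectors is zero). Setting $a_i = |p_i|$, this gives $V_2(Z) \le \sum_{i<j} a_i a_j$. Second, using the identity $2\sum_{i<j} a_i a_j = \bigl(\sum_i a_i\bigr)^2 - \sum_i a_i^2$ together with the inequality $\sum_i a_i^2 \ge \frac{1}{d}\bigl(\sum_i a_i\bigr)^2 = d r^2$ between the quadratic and arithmetic means, one obtains
\[
\sum_{i<j} a_i a_j \le \tfrac{1}{2}\bigl(d^2 r^2 - d r^2\bigr) = \binom{d}{2} r^2.
\]
Chaining the two estimates yields $V_2(Z) \le \binom{d}{2} r^2 = V_2(Z_{\mathrm{p}}^{\mathrm{reg}})$.

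Finally I would handle the equality case. Equality in the second step forces all $a_i$ equal to $r$, hence all generators have the same length and in particular none is zero; equality in the first step then forces the $p_i$ to be pairwise orthogonal. Thus $Z$ is congruent to a cube of edge length $r$, i.e.\ congruent to $Z_{\mathrm{p}}^{\mathrm{reg}}$. There is no serious obstacle here; the statement is essentially an AM--QM inequality combined with the fact that the wedge product is dominated by the product of norms.
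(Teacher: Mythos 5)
Your proof is correct and follows essentially the same route as the paper: reduce to $|p_i \wedge p_j|\le |p_i||p_j|$ and then maximize $\sum_{i<j}a_ia_j$ subject to $\sum_i a_i$ fixed. The only cosmetic difference is that the paper cites the Maclaurin inequality (Lemma~\ref{lem:Maclaurin}) for that last optimization, whereas you unpack it directly as an AM--QM computation; these are the same argument.
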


\begin{proof}
Let $Z = \sum_{i=1}^d [o,p_i]$, and set $\lambda_i = | p_i |$ {where, without loss of generality, we may assume that all the $\lambda_i$ are positive.}. Then, for any $i \neq j$, we have $|p_i \wedge p_j| \leq \lambda_i \lambda_j$, with equality if and only $p_i$ and $p_j$ are orthogonal. Thus, we need to show that if $\sum_{i=1}^d \lambda_i$ is fixed, then $A=\sum_{1 \leq i < j \leq d} \lambda_i \lambda_j$ is maximal if and only if all $\lambda_i$ are equal. But this follows {\zl from
the Maclaurin inequality (see Lemma~\ref{lem:Maclaurin})}.
\end{proof}

In the remaining part of Subsection~\ref{subsec:rh_mw_V2}, we find the equilateral rhombic dodecahedra with a given mean width and maximal second intrinsic volume. Our proof is a modification of a proof of \cite[Theorem 3]{AM}. {\zl We remark that the `dual' problem, that is, minimizing the $i$th intrinsic volume of a zonotope with a fixed mean width was considered in \cite{Hug} for the special class of zonotopes with an isotropic generating measure.}

\begin{thm}\label{thm:rh_v2}
Let $Z_{\mathrm{rd}}^{\mathrm{reg}}= \sum_{i=1}^{d+1} [o,q_i]$, where $q_i \in \Sph^{d-1}$ for all values of $i$.
Then, if $Z=\sum_{i=1}^{d+1} [o,p_i]$ is a rhombic dodecahedron with $p_i \in \Sph^{d-1}$ for all values of $i$, then
\[
V_2(Z) \geq V_2(Z_{\mathrm{rd}}^{\mathrm{reg}}),
\]
with equality if and only if $Z$ is regular.
\end{thm}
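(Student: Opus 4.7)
By Corollary~\ref{cor:intrinsic_formula},
\[
V_2(Z) = \sum_{1 \leq i < j \leq d+1} |p_i \wedge p_j| = \sum_{1 \leq i < j \leq d+1} \sqrt{1 - \langle p_i, p_j\rangle^2},
\]
since each $p_i$ is a unit vector. At the regular configuration every pairwise inner product equals $-1/d$, so $V_2(Z_{\mathrm{rd}}^{\mathrm{reg}}) = \binom{d+1}{2}\frac{\sqrt{d^2-1}}{d}$. The claim therefore reduces to a constrained extremum problem for the Gram matrix $G = [\langle p_i, p_j\rangle]$ of the generators, subject to $G_{ii} = 1$, $G \succeq 0$, and $\operatorname{rank}(G) \leq d$.

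The plan is to follow the pattern used in Theorem~\ref{thm:rh_cirr} and \cite[Theorem 3]{AM}. First I would extract a global averaging constraint from the inequality $\bigl|\sum_{i=1}^{d+1} p_i\bigr|^2 \geq 0$: expansion gives $\sum_{i<j} \langle p_i, p_j\rangle \geq -\tfrac{d+1}{2}$, with equality exactly when $Z$ is in centered canonical form, so the mean $\bar x$ of the $\binom{d+1}{2}$ pairwise inner products satisfies $\bar x \geq -1/d$. Next I would analyze the smooth map $(p_1,\ldots,p_{d+1}) \mapsto V_2(Z)$ on the product sphere $(\Sph^{d-1})^{d+1}$ by the Lagrange multiplier method: the critical-point conditions read
\[
\sum_{j \neq i} \frac{\langle p_i, p_j\rangle}{\sqrt{1-\langle p_i,p_j\rangle^2}}\, p_j \;=\; \lambda_i\, p_i \qquad (i=1,\ldots,d+1).
\]
Combined with the rank-$d$ Gram constraint and the permutation symmetry of the problem, these equations should force all off-diagonal entries of $G$ to coincide at any extremal configuration; the common value must then equal $-1/d$, identifying the extremizer as regular up to an orthogonal transformation of $\Re^d$.

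The hard part will be establishing the direction of the inequality. Jensen's inequality applied to the concave function $f(x) = \sqrt{1-x^2}$ on $[-1,1]$ yields only the \emph{upper} estimate $V_2(Z) \leq \binom{d+1}{2}\, f(\bar x)$, not the lower bound required here; moreover $f$ is increasing on $[-1,0]$, so the averaging constraint $\bar x \geq -1/d$ does not by itself produce $V_2(Z) \geq \binom{d+1}{2}\, f(-1/d)$. Closing the gap will therefore require genuine use of the algebraic constraint $\det G = 0$ enforced by $\operatorname{rank}(G) \leq d$, most likely through a Hessian computation at the regular configuration together with a compactness argument on the (quotient of the) product sphere to rule out competing local extrema. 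If this direct Lagrangian route proves intractable, a fallback is a shadow-system / Steiner-symmetrization argument in the spirit of Lemma~\ref{lem:Steiner}, tracking $V_2$ under continuous deformations of the $p_i$ that preserve the unit-norm constraints and exploiting that $\theta \mapsto \sin\theta$ is strictly concave on $(0,\pi)$ away from $\pi/2$; the equality case would then follow, as in the proofs of Lemma~\ref{lem:Steiner} and Theorem~\ref{thm:rh_cirr}, from the rigidity of regular simplices.
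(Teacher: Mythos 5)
Your instinct that the stated lower bound resists proof is correct, and you should have pushed it one step further: the inequality sign in the statement is a misprint. The subsection's introduction announces that it finds the equilateral rhombic dodecahedra with given mean width and \emph{maximal} second intrinsic volume, and the paper's own argument concludes with $V_2(Z) \leq \frac{1}{2}(d+1)\sqrt{d^2-1} = V_2(Z_{\mathrm{rd}}^{\mathrm{reg}})$. The lower bound you set out to prove is in fact false: letting the unit generators $p_i$ cluster near one direction while still spanning $\Re^d$ makes every $|p_i \wedge p_j| = \sqrt{1-\langle p_i,p_j\rangle^2}$, and hence $V_2(Z)$, arbitrarily small. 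So the ``gap'' you propose to close with a Hessian computation, a compactness argument, or a shadow-system fallback cannot be closed, and your critical-point analysis would at best locate the regular configuration as a maximizer, not a minimizer.

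For the correct ($\leq$) direction your Gram-matrix reduction is the right starting point, and the two steps you are missing are short. First, the arithmetic--quadratic mean inequality applied to the $\binom{d+1}{2}$ numbers $\sqrt{1-\langle p_i,p_j\rangle^2}$ gives
\[
\left(V_2(Z)\right)^2 \leq \binom{d+1}{2}\left(\binom{d+1}{2} - \sum_{1\leq i<j\leq d+1}\langle p_i,p_j\rangle^2\right),
\]
which is an upper bound of the kind you dismissed as going the wrong way; it goes the right way once the target is a maximum. Second --- and this is the step your proposal only gestures at via ``$\det G=0$'' --- the rank constraint is exploited as follows: the $d+1$ unit vectors in $\Re^d$ admit a dependence $\sum_i\lambda_i p_i=o$ with $\sum_i\lambda_i^2=1$; expanding $0=\bigl|\sum_i\lambda_i p_i\bigr|^2$ yields $1=-2\sum_{i<j}\lambda_i\lambda_j\langle p_i,p_j\rangle$, the Cauchy--Schwarz inequality then gives $\sum_{i<j}\langle p_i,p_j\rangle^2 \geq \bigl(4\sum_{i<j}\lambda_i^2\lambda_j^2\bigr)^{-1}$, and Maclaurin's inequality (Lemma~\ref{lem:Maclaurin}) applied to the numbers $\lambda_i^2$ bounds $\sum_{i<j}\lambda_i^2\lambda_j^2 \leq \frac{d}{2(d+1)}$, whence $\sum_{i<j}\langle p_i,p_j\rangle^2 \geq \frac{d+1}{2d}$. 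Substituting back gives the claimed maximum, and the equality conditions in the three inequalities (all $|p_i\wedge p_j|$ equal, proportionality in Cauchy--Schwarz, all $\lambda_i^2$ equal) force $\langle p_i,p_j\rangle=-\frac{1}{d}$ for all $i<j$, i.e.\ regularity. Note that your proposed averaging constraint $\bigl|\sum_i p_i\bigr|^2\geq 0$ controls only the first moment $\sum_{i<j}\langle p_i,p_j\rangle$ and is strictly weaker than the second-moment bound that the linear dependence delivers; this is precisely why your route stalls.
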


\begin{proof}
It is well known that for any vectors $x_1, x_2, \ldots, x_k$ in $\Re^d$ with $k \leq d$, the square of the $k$-dimensional volume of the parallelotope spanned by $x_1, x_2, \ldots, x_k$ is equal to the determinant of the Gram matrix of the vectors (see e.g. \cite{GoverKrikorian}). In particular, for any $x_1, x_2 \in \Re^d$, the square of the area of the parallelogram spanned by $x_1, x_2$ is
\[
|x_1 \wedge x_2 |^2 = \det ([x_1,x_2]^T [x_1,x_2]) = \langle x_1, x_1 \rangle \langle x_2, x_2 \rangle - \langle x_1, x_2 \rangle^2 .
\]
This and Corollary~\ref{cor:intrinsic_formula} imply that
\[
V_2(Z) = \sum_{1 \leq i < j \leq d+1} \sqrt{\langle p_i, p_i \rangle \langle p_j, p_j \rangle - \langle p_i, p_j \rangle^2}.
\]
We note that this yields, in particular, that $V_2(Z_{\mathrm{reg}}) = \binom{d+1}{2} \cdot \sqrt\frac{d^2-1}{d^2} = \frac{1}{2}(d+1) \sqrt{d^2-1}$.
By applying the fact that $Z$ is of unit edge length and also the inequality between the arithmetic and the quadratic means we obtain that
\begin{equation}\label{eq:2nd_intrinsic_1}
 \left( V_2(Z) \right)^2 \leq \binom{d+1}{2} \left( \binom{d+1}{2} - \sum_{1 \leq i < j \leq d+1} \langle p_i, p_j \rangle^2 \right),
\end{equation}
with equality if and only if $Z$ is regular.
In the following part we determine the minimum value of $\sum_{1 \leq i < j \leq d+1} \langle p_i, p_j \rangle^2$.

Since the vectors $p_1, p_2, \ldots, p_{d+1}$ are linearly dependent, there are constants $\lambda_i \in \Re$, where $i=1,2,\ldots,d+1$, such that
$\sum_{i=1}^{d+1} \lambda_i p_i = o$ and {\zl $\sum_{i=1}^{d+1} \lambda_i^2 > 0$}. Without loss of generality, we may assume that {\zl $\sum_{i=1}^{d+1} \lambda_i^2=1$}.
Multiplying both sides of this vector equation by itself and using the equalities $\langle p_i, p_i \rangle = 1$, we obtain that
\[
0 = \sum_{i=1}^{d+1} \lambda_i^2 + 2 \sum_{1 \leq i < j \leq d+1} \lambda_i \lambda_j \langle p_i, p_j \rangle {\zl = 1+  2 \sum_{1 \leq i < j \leq d+1} \lambda_i \lambda_j \langle p_i, p_j \rangle}.
\]
Thus, by the Cauchy-Schwarz Inequality, we have
\[
{\zl 1} = - 2 \sum_{1 \leq i < j \leq d+1} \lambda_i \lambda_j \langle p_i, p_j \rangle \leq 2 \sqrt{\sum_{1 \leq i < j \leq d+1} \lambda_i^2 \lambda_j^2} \sqrt{\sum_{1 \leq i < j \leq d+1} \langle p_i, p_j \rangle^2},
\]
implying that
\begin{equation}\label{eq:2nd_intrinsic_2}
\sum_{1 \leq i < j \leq d+1} \langle p_i, p_j \rangle^2 \geq \frac{ {\zl 1} }{4 \sum_{1 \leq i < j \leq d+1} \lambda_i^2 \lambda_j^2},
\end{equation}
{\zl which yields by Lemma~\ref{lem:Maclaurin} that
\[
\sum_{1 \leq i < j \leq d+1} \langle p_i, p_j \rangle^2 \geq \frac{2d}{d+1}.
\]
}
%On the other hand, by the inequality between the arithmetic and quadratic means, we have
%\[
%\frac{\left( \sum_{i=1}^{d+1} \lambda_i^2 \right)^2}{4 \sum_{1 \leq i < j \leq d+1} \lambda_i^2 \lambda_j^2} = \frac{\left( \sum_{i=1}^{d+1} \lambda_i^2 \right)^2}{2 \left( \left( \sum_{i=1}^{d+1} \lambda_i^2 \right)^2 - \sum_{i=1}^{d+1} \lambda_i^4 \right)} =
%\frac{1}{2 \left( 1 - \frac{1}{\left( \sum_{i=1}^{d+1} \lambda_i^2 \right)^2}\right)} \geq \frac{1}{2 \left( 1 - \frac{1}{d+1} \right)} = \frac{d+1}{2d}.
%\]
Combining this inequality {\zl with (\ref{eq:2nd_intrinsic_1})}, we obtain that
\[
V_2(Z) \leq \sqrt{\binom{d+1}{2}\left(\binom{d+1}{2} - \frac{d+1}{2d}\right)} = \frac{1}{2}(d+1)\sqrt{d^2-1} = V_2(Z_{\mathrm{reg}}),
\]
with equality if and only if $Z$ is regular.
\end{proof}

\subsection{The squared $k$-volumes of a zonotope}\label{subsec:rh_squared}

{\zl Recall that for any simplex $S \subset \Re^d$, integer $1 \leq k \leq d-1$, and real number $m \in \Re$, $g_k^m(S)$ denotes the sum of the $m$th powers of the volumes of the $k$-faces of $S$ \cite{Tanner}.} Similarly, Filliman \cite{Filliman} investigated the  problem of finding the zonotopes of maximal volume with a fixed value of the squared lengths of its generating vectors. These results, Corollary~\ref{cor:intrinsic_formula}, and the results {\zl of Brazitikos and McIntyre in \cite{BM} is the motivation behind our next definition}.

\begin{defn}\label{defn:alpha}
Let $Z = \sum_{i=1}^n [o,p_i]$ be a zonotope in $\Re^d$ and let {\zl $\alpha > 0$}. We call the quantity
\[
V_{k,\alpha} (Z) = \sum_{1 \leq i_1 < i_2 < \ldots < i_k \leq n} V_k^{\alpha}(P(i_1, \ldots, i_k))
\]
the \emph{total $k$-volume of power $\alpha$} of $Z$.
\end{defn}

Our next result is an Alexandrov-Fenchel type inequality for the total squared $k$-volumes of rhombic dodecahedra. We remark that an analogous statement for parallelotopes can be found in \cite{BM}.
%This result is a variant of Theorem 2 in \cite{BM} for $d+1$ vectors.

\begin{thm}\label{thm:squared}
Let $Z$ be a rhombic dodecahedron in $\Re^d$.
Then, for any $1 \leq k < m \leq d$, the quantity
\[
\frac{\left( V_{k,2} (Z) \right)^m }{\left( V_{m,2}(Z) \right)^k}
\]
is minimal if and only if $Z$ is regular.
\end{thm}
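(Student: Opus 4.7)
The plan is to reduce the inequality to Maclaurin's inequality (Lemma~\ref{lem:Maclaurin}) applied to the spectrum of a Gram-type matrix built from the generators of $Z$.

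Write $Z = \sum_{i=1}^{d+1}[o,p_i]$ and let $A$ denote the $d \times (d+1)$ matrix whose columns are $p_1, \ldots, p_{d+1}$. First I would establish the spectral identity
\[
V_{k,2}(Z) = \sum_{|I|=k} \det(A_I^T A_I) = e_k(\lambda_1, \ldots, \lambda_d),
\]
where $A_I$ is the submatrix of $A$ with columns indexed by $I$, $e_k$ is the $k$-th elementary symmetric polynomial, and $\lambda_1, \ldots, \lambda_d > 0$ are the eigenvalues of $M := A A^T = \sum_{i=1}^{d+1} p_i p_i^T$. The first equality follows from the Gram determinant formula $|p_{i_1} \wedge \cdots \wedge p_{i_k}|^2 = \det(A_I^T A_I)$ already used in the proof of Theorem~\ref{thm:rh_v2}, and the second is the standard identity expressing the sum of $k \times k$ principal minors of $A^T A$ as the $k$-th elementary symmetric polynomial of its eigenvalues (noting that the nonzero eigenvalues of $A^T A$ and $A A^T$ coincide).

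Next I apply Lemma~\ref{lem:Maclaurin} iteratively to the positive reals $\lambda_1, \ldots, \lambda_d$ with indices $k < m \leq d$ to obtain
\[
\left( \frac{e_k(\lambda)}{\binom{d}{k}} \right)^{1/k} \geq \left( \frac{e_m(\lambda)}{\binom{d}{m}} \right)^{1/m},
\]
with equality if and only if $\lambda_1 = \cdots = \lambda_d$. Raising both sides to the $km$-th power and rearranging yields
\[
\frac{V_{k,2}(Z)^m}{V_{m,2}(Z)^k} = \frac{e_k(\lambda)^m}{e_m(\lambda)^k} \geq \frac{\binom{d}{k}^m}{\binom{d}{m}^k},
\]
with equality if and only if $M = \lambda I$ for some $\lambda > 0$.

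The last task is to identify the equality condition $M = \lambda I$ with $Z$ being a regular rhombic dodecahedron. For the `if' direction, when the $p_i$ are the vertices of a regular simplex centered at $o$, the symmetric group $S_{d+1}$ acts on $\Re^d$ irreducibly by permuting the $p_i$, and since $M$ commutes with this action, Schur's lemma forces $M = \lambda I$. The converse is the main obstacle and carries the geometric content of the theorem: the plan is to exploit the freedom to replace any $p_i$ by $-p_i$ (which merely translates $Z$) in order to find a sign pattern $\varepsilon \in \{-1,1\}^{d+1}$ making $\sum_i \varepsilon_i p_i = o$; once this is done, combining with $M = \lambda I$ and expanding the resulting Gram relations forces $|p_i|^2 = \lambda d/(d+1)$ and $\langle \varepsilon_i p_i, \varepsilon_j p_j \rangle = -\lambda/(d+1)$ for all $i \neq j$, which is precisely the condition that the $\varepsilon_i p_i$ form the vertices of a regular simplex centered at $o$. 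I expect the key technical difficulty to be the sign-flip step: showing that the tight-frame condition together with the rhombic-dodecahedron hypothesis guarantees the existence of such a sign pattern, which does not follow from purely linear-algebraic considerations about tight frames of $d+1$ vectors in $\Re^d$.
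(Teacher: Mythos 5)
Your spectral reduction --- expressing $V_{k,2}(Z)$ as the $k$-th elementary symmetric polynomial $e_k(\lambda_1,\ldots,\lambda_d)$ of the eigenvalues of the frame operator $M = AA^T$ and then applying Maclaurin's inequality --- is the same mechanism the paper uses; the paper works instead with the $(d+1)\times(d+1)$ Gram matrix $G = A^TA$ (same nonzero spectrum plus a zero eigenvalue) and invokes Lemma~\ref{lem:Maclaurin_v2} to accommodate the zero. Up to and including the characterization of equality as $M=\lambda I_d$, you and the paper run in parallel, and your forward direction (regular implies $M=\lambda I$, via Schur's lemma) is fine.

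Where you are more careful than the paper is precisely the step you flag as unresolved: passing from $M=\lambda I$ back to ``$Z$ is regular.'' The paper asserts this without argument. You propose finding signs $\varepsilon\in\{-1,1\}^{d+1}$ with $\sum_i\varepsilon_i p_i=o$ and then reading off the regular-simplex Gram relations, and you correctly suspect that such a sign pattern need not exist. It does not, and in fact the converse is false: $M=\lambda I$ does \emph{not} imply that $Z$ is regular. For $d=2$ take $p_1=\left(-\tfrac12,\tfrac{1}{\sqrt2}\right)$, $p_2=\left(-\tfrac12,-\tfrac{1}{\sqrt2}\right)$, $p_3=\left(\tfrac{1}{\sqrt2},0\right)$. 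One computes $\sum_i p_ip_i^T=I_2$, so the Maclaurin ratio $V_{1,2}^2/V_{2,2}$ attains its minimum value $4$; yet $|p_1|=|p_2|=\tfrac{\sqrt3}{2}\neq|p_3|=\tfrac{1}{\sqrt2}$, so the hexagon $\sum_i[o,p_i]$ has two distinct edge lengths and is not a regular rhombic dodecahedron, nor does any of the eight sign patterns give $\sum_i\varepsilon_i p_i=o$. The structural reason is that $M=\lambda I$ only yields $|p_i|^2=\lambda(1-v_i^2)$ where $v$ is the unit vector spanning $\ker A$ (so $\sum_i v_ip_i=o$); regularity additionally requires $v_i^2=\tfrac{1}{d+1}$ for all $i$, which the tight-frame condition alone does not enforce. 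So the gap you isolated is genuine, it is equally present in the paper's proof, and it cannot be closed as the theorem is stated: the ``only if'' clause fails unless one restricts to equilateral $Z$ (where $|p_i|^2$ constant does force $v_i^2$ constant) or replaces ``$Z$ is regular'' by ``the generators form a tight frame.''
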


First, we prove a slight generalization of {\zl Lemma~\ref{lem:Maclaurin}}.

\begin{lem}\label{lem:Maclaurin_v2}
Let $1 \leq k < m$ be integers, and $x_1, \ldots, x_m \geq 0$ be nonnegative real numbers. Then
\begin{equation}\label{eq:Maclaurin}
\left( \frac{\sigma_m^k(x_1,x_2,\ldots,x_m)}{\binom{m}{k}} \right)^{\frac{1}{k}} \geq \left( \frac{\sigma_m^\textbf{k+1}(x_1,x_2,\ldots,x_m)}{\binom{m}{k+1}} \right)^{\frac{1}{k+1}},
\end{equation}
with equality if and only if all $x_i$ are equal, or at least $m-k+1$ of them is zero.
\end{lem}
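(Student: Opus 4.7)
The plan is to derive (\ref{eq:Maclaurin}) from Lemma~\ref{lem:Maclaurin} by a continuity argument, and to settle the equality statement through a case analysis based on the number of nonzero entries.

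For the inequality itself, I would apply Lemma~\ref{lem:Maclaurin} to the strictly positive values $x_1 + \epsilon, \ldots, x_m + \epsilon$ for each $\epsilon > 0$, and let $\epsilon \to 0^+$. Since $\sigma^k_m$ is a polynomial in its arguments, both sides of (\ref{eq:Maclaurin}) are continuous in $x_1, \ldots, x_m$, so the weak inequality is preserved in the limit.

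For the equality case, set $s = |\{i : x_i > 0\}|$ and denote the positive entries by $y_1, \ldots, y_s$, so that $\sigma^j_m(x_1,\ldots,x_m) = \sigma^j_s(y_1,\ldots,y_s)$ for every $j$. I would split into four subcases. If $s = m$, Lemma~\ref{lem:Maclaurin} applies verbatim and its equality case forces all $x_i$ to coincide. If $s \leq k-1$, both $\sigma^k_m$ and $\sigma^{k+1}_m$ vanish, so the two sides agree and the number of zeros is $m - s \geq m - k + 1$. If $s = k$, then $\sigma^k_m = y_1 \cdots y_k > 0$ while $\sigma^{k+1}_m = 0$, giving a strict inequality that contradicts the equality assumption. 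The remaining range $k+1 \leq s \leq m-1$ is the heart of the proof: combining the assumed equality for $x$ with Lemma~\ref{lem:Maclaurin} applied to $y_1, \ldots, y_s$, and dividing by $\sigma^{k+1}_s(y)^{k} > 0$, the problem reduces to the purely combinatorial claim
\[
\binom{m}{k}^{k+1}\binom{s}{k+1}^{k} \;\geq\; \binom{m}{k+1}^{k}\binom{s}{k}^{k+1},
\]
which, via $\binom{n}{k+1} = \binom{n}{k}(n-k)/(k+1)$, is equivalent to $h(m) \geq h(s)$ for $h(n) := \binom{n}{k}/(n-k)^{k}$.

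The key remaining step, and the place where I expect the main technical obstacle, is to prove that $h$ is strictly decreasing on integers $n > k$; this contradicts $s < m$ and completes the equality analysis. The ratio rewrites as
\[
\frac{h(n+1)}{h(n)} = \frac{n+1}{n+1-k}\left(\frac{n-k}{n+1-k}\right)^{k},
\]
so with $b = n+1-k \geq 1$ the monotonicity reduces to the strict inequality $(b-1)^{k}(b+k) < b^{k+1}$. This follows from AM-GM applied to the $k+1$ nonnegative reals consisting of $k$ copies of $b-1$ together with one copy of $b+k$: their arithmetic mean is exactly $b$, and strict inequality holds because $b-1 \neq b+k$ for $k \geq 1$.
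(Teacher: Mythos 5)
Your proof is correct, and it reaches the same key combinatorial inequality as the paper, but proves it by a genuinely different method. In both arguments, the equality analysis boils down to showing that the sequence $n \mapsto \binom{n}{k}^{k+1}\big/\binom{n}{k+1}^{k}$ (your $h(n)$ up to the constant $(k+1)^{k}$) is strictly decreasing in $n > k$. The paper obtains this by first restating the needed ratio inequality as $\left(1-\tfrac{k}{m+1}\right)^{1/k} > \left(1-\tfrac{k+1}{m+1}\right)^{1/(k+1)}$ and then invoking strict monotonicity of $x \mapsto (1-ax)^{1/x}$ on $(0, 1/a]$, which is an analytic (calculus) fact. You instead reduce the ratio $h(n+1)/h(n)$ to $(b-1)^{k}(b+k) < b^{k+1}$ with $b = n+1-k$, and prove it by strict AM-GM on $k$ copies of $b-1$ and one copy of $b+k$. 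Your route is more elementary and self-contained, while the paper's is shorter once the monotonicity of $(1-ax)^{1/x}$ is accepted. Structurally, the paper sets things up as an inductive reduction (append a zero, pass from $m$ to $m+1$), whereas you argue directly by isolating the multiset of nonzero entries $y_1,\ldots,y_s$ and doing an explicit case split on $s$; this makes the treatment of the boundary cases ($s \le k-1$, $s = k$) cleaner and more visibly complete than the paper's terse presentation. One small remark: for the record you should note the ``if'' direction (all equal implies equality via Lemma~\ref{lem:Maclaurin}; at least $m-k+1$ zeros makes both sides vanish), which you leave implicit, but this is immediate.
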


\begin{proof}
Since both sides in (\ref{eq:Maclaurin}) are continuous functions of their variables, the inequality in (\ref{eq:Maclaurin}) clearly holds. Similarly, the equality case in Lemma~\ref{lem:Maclaurin_v2} holds if all $x_i$ are positive. Note that for any values of $m,k, x_1, x_2, \ldots, x_m$, $\sigma_m^k(x_1,\ldots,x_m) = \sigma_{m+1}^k(x_1,\ldots,x_m,0)$. Thus, to prove Lemma~\ref{lem:Maclaurin_v2} for all nonnegative $x_i$, it is sufficient to show that if $\sqrt[k]{\frac{A}{\binom{m}{k}}} \geq \sqrt[k+1]{\frac{B}{\binom{m}{k+1}}}$ for some reals $A,B > 0$ and integers $1 \leq k < m$, then $\sqrt[k]{\frac{A}{\binom{m+1}{k}}} > \sqrt[k+1]{\frac{B}{\binom{m+1}{k+1}}}$. In other words, we need to show that
\begin{equation}\label{eq:Maclaurin2}
\left( \frac{ \binom{m}{k} }{\binom{m+1}{k}} \right)^{\frac{1}{k}} >
\left( \frac{ \binom{m}{k+1} }{\binom{m+1}{k+1}} \right)^{\frac{1}{k+1}}
\end{equation}
for all $1 \leq k < m$. An elementary computation shows that the inequality in (\ref{eq:Maclaurin2}) is equivalent to
\[
\left( 1 - \frac{k}{m+1} \right)^{\frac{1}{k}} > \left( 1 - \frac{k+1}{m+1} \right)^{\frac{1}{k+1}},
\]
and thus, the assertion follows from the fact that for any $0 < a < 1$, the function $x \mapsto \left( 1-ax \right)^{\frac{1}{x}}$ is strictly decreasing on the interval $\left( 0, \frac{1}{a} \right]$.
\end{proof}

\begin{proof}[Proof of Theorem~\ref{thm:squared}]
Consider the zonotope $Z = \sum_{i=1}^{d+1} [o,p_i]$. Let $G$ be the Gram matrix of the vectors $p_i$; i.e. $G = [\langle p_i, p_j \rangle]$. Then $G$ is a $(d+1) \times (d+1)$ symmetric, positive semidefinite matrix. Let $p(\lambda) = \det (G - \lambda E)$ denote the characteristic polynomial of $G$. Clearly, the coefficient of $\lambda^{d+1-k}$ is equal to the signed sum of the principal minors of size $k \times k$ \cite{Meyer}, where a principal minor of size $k \times k$ of $G$ is the determinant of a $k \times k$ submatrix of $G$ symmetric to the main diagonal of $G$.
On the other hand, it is also well known that this coefficient is equal to the $k$th elementary symmetric function of the eigenvalues of $G$. Note that the rank of $G$ is equal to the dimension of the linear hull of the vectors $v_i$, and thus, $0$ is an eigenvalue of $G$ with multiplicity at least one.
Thus, the inequality in Theorem~\ref{thm:squared}, together with the equality part, follows from Lemma~\ref{lem:Maclaurin_v2} applied for $d$ eigenvalues of $G$, containing all nonzero ones.
\end{proof}

{\zl

We also note that a straightforward modification of the proof of Theorem~\ref{thm:squared} verifies Conjecture~\ref{conj:Maclaurin} for $p=2$ in the following form.

\begin{thm}\label{thm:squared2}
Let $n \geq d$ and $Z = \sum_{i=1}^{n} [o,x_i]$ be a zonotope in $\Re^d$.
Then, for any $1 \leq k < d$, the quantity
\[
\left( \frac{V_{k,2} (Z)}{ \binom{n}{k} } \right)^{\frac{1}{k}} \geq \left( \frac{V_{k+1,2} (Z)}{ \binom{n}{k+1} } \right)^{\frac{1}{k+1}},
\]
with equality if and only if $n=d$ and $Z$ is a cube, or if the dimension of $Z$ is at most $k-1$.
\end{thm}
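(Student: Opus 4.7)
The plan is to mirror the Gram-matrix argument used in the proof of Theorem~\ref{thm:squared}, simply replacing the size $d+1$ by a general $n \geq d$ and the pair $(k,m)$ by the consecutive pair $(k,k+1)$ so that Lemma~\ref{lem:Maclaurin_v2} applies directly. Concretely, form the $n \times n$ Gram matrix $G = [\langle x_i, x_j \rangle]_{1 \leq i, j \leq n}$, which is symmetric and positive semidefinite, and let $\lambda_1, \ldots, \lambda_n \geq 0$ denote its eigenvalues. The standard Gram-determinant identity gives $|x_{i_1} \wedge \ldots \wedge x_{i_k}|^2 = \det ([\langle x_{i_j}, x_{i_l} \rangle]_{j,l=1}^k)$, and summing over all $k$-subsets identifies $V_{k,2}(Z)$ with the sum of all $k \times k$ principal minors of $G$, which in turn equals the elementary symmetric function $\sigma_n^k(\lambda_1, \ldots, \lambda_n)$.

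With this, the inequality reduces to
\[
\left( \frac{\sigma_n^k(\lambda_1, \ldots, \lambda_n)}{\binom{n}{k}} \right)^{\frac{1}{k}} \geq \left( \frac{\sigma_n^{k+1}(\lambda_1, \ldots, \lambda_n)}{\binom{n}{k+1}} \right)^{\frac{1}{k+1}},
\]
which is precisely Lemma~\ref{lem:Maclaurin_v2} applied to the $n$ nonnegative numbers $\lambda_i$. The only work left is to analyze equality. By Lemma~\ref{lem:Maclaurin_v2}, equality holds if and only if all $\lambda_i$ are equal, or at least $n-k+1$ of them vanish. In the latter case, $\mathrm{rank}(G) \leq k-1$, and since $\mathrm{rank}(G) = \dim \mathrm{lin}(x_1, \ldots, x_n) = \dim Z$, this is exactly the condition $\dim Z \leq k-1$. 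In the former case, since $\mathrm{rank}(G) \leq d$, the common eigenvalue is forced to be zero whenever $n > d$; this sub-case is already covered by $\dim Z \leq k-1$ (as $Z = \{o\}$). Hence equality with a common positive eigenvalue requires $n = d$, which means $G$ is a positive multiple of the $d \times d$ identity matrix, i.e.\ $x_1, \ldots, x_d$ are pairwise orthogonal and of a common length, which is equivalent to $Z$ being a cube.

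Since both the Gram-matrix identification of $V_{k,2}(Z)$ and the Maclaurin inequality in the form of Lemma~\ref{lem:Maclaurin_v2} are already in place, there is no serious analytic obstacle; the only delicate point is the equality discussion, specifically keeping track of the rank constraint $\mathrm{rank}(G) \leq d$ imposed by the ambient dimension when interpreting the ``all equal'' case, and verifying that the two equality regimes listed in the theorem statement genuinely exhaust the possibilities from Lemma~\ref{lem:Maclaurin_v2} without overlap or omission. Once this bookkeeping is done, the proof is a one-line substitution into the argument of Theorem~\ref{thm:squared}.
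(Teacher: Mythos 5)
Your proof is correct and takes essentially the same route the paper intends: the paper presents Theorem~\ref{thm:squared2} as a ``straightforward modification'' of the Gram-matrix argument for Theorem~\ref{thm:squared}, and your substitution of the $n\times n$ Gram matrix with Lemma~\ref{lem:Maclaurin_v2} applied to all $n$ eigenvalues is precisely that modification, including the correct identification $\mathrm{rank}(G)=\dim Z$ for the equality analysis. The only cosmetic difference is that the paper's Theorem~\ref{thm:squared} proof speaks of applying the lemma to $d$ eigenvalues (containing all nonzero ones), whereas here the $\binom{n}{k}$ normalizations make it cleanest to apply it to all $n$ eigenvalues exactly as you do.
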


}

\section{Isoperimetric problems for zonotopes generated by a large number of segments}\label{sec:asymptotic}

Recall that for any $d \geq 2$ and $n \geq d$, $\mathcal{Z}_{d,n}$ denotes the family of $d$-dimensional zonotopes generated by $n$ segments.
In this section we investigate isoperimetric problems for zonotopes generated by sufficiently many segments. It is worth noting that as the Euclidean ball can be approached arbitrarily well by zonotopes, and in Euclidean space a ball is the solution of most isoperimetric problems among all convex bodies, the problems discussed in this section are closely related to the problem of how well a Euclidean ball can be approached with a zonotope generated by a given number of segments.
The latter problem was extensively studied in the literature, motivated by the problem of determining the minimum number of directions we need to estimate the surface area of a convex body up to error $\varepsilon$ from the areas of the projections in the chosen directions. The order of magnitude of the number of directions was established in most dimensions in a series of papers \cite{BetkeMcMullen, Linhart, BL, BLM, Matousek} by Linhart, Bourgain, Lindestrauss, Milman and Matou\v{s}ek. Namely, it is known that for $d \geq 2$ there are constants $c_1=c_1(d)$ and $c_2=c_2(d)$ depending only on $d$ with the following property: if for any $\varepsilon > 0$, $N(\varepsilon)$ denotes the minimum number $n$ such that there is a zonotope $Z \in \mathcal{Z}_{d,n}$ with $\BB^d \subseteq Z \subseteq (1+\varepsilon) \BB^d$, then
\begin{equation}\label{eq:BLM}
c_1 \varepsilon^{\frac{-2(d-1)}{d+2}} \leq N(\varepsilon) \leq \left\{
\begin{array}{l}
c_2 \varepsilon^{\frac{-2(d-1)}{d+2}}, \hbox{ if } d=2 \hbox{ or } d \geq 5,\\
c_2 \left( \varepsilon^{-2} \log |\varepsilon|\right)^{\frac{(d-1)}{d+2}}, \hbox{ otherwise}.
\end{array}
\right.
\end{equation}
Furthermore, it was proved in \cite{BourgLind} that the weaker bound on the right-hand side of (\ref{eq:BLM}) can be attained with an equilateral zonotope, i.e. there is a zonotope $Z$ generated by $n \leq c_2 \left( \varepsilon^{-2} \log |\varepsilon|\right)^{\frac{(d-1)}{d+2}}$ segments of equal length such that $\BB^d \subseteq Z \subseteq (1+\varepsilon) \BB^d$.

Similarly like in the previous problem, in our investigation we regard the dimension $d$ as a fixed parameter, and the number $n$ of generating segments as a variable. Furthermore, we set
\[
U_d(n) = \left\{
\begin{array}{l}
\frac{\sqrt{\log n}}{n^{\frac{d+2}{2d-2}}}, \hbox{ if } d = 3 \hbox{ or } d=4,\\
\frac{1}{n^{\frac{d+2}{2d-2}}}, \hbox{ if } d = 2 \hbox{ or } d \geq 5 .
\end{array}
\right.
\]

Our first result is an immediate consequence of (\ref{eq:BLM}), and we omit its proof.

\begin{thm}\label{thm:ircr}
Let $d \geq 2$ be fixed. Then there are positive constants  $c=c(d)$ and $C=C(d)$ depending only on the dimension such that for any $n \geq d+1$,
\begin{equation}\label{eq:ircr}
\frac{ c }{n^{\frac{d+2}{2d-2}}} \leq \min \left\{  \frac{\cirr(Z)}{\ir(Z)}-1 : Z \in \mathcal{Z}_{d,n} \right\} \leq C U_d(n) .
\end{equation}
\end{thm}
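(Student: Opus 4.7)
The plan is to derive Theorem~\ref{thm:ircr} directly from (\ref{eq:BLM}) by exploiting the central symmetry of zonotopes. Indeed, because the inball and circumball of a centrally symmetric convex body are both centered at its centre of symmetry, they are automatically concentric. Hence, for any $Z \in \mathcal{Z}_{d,n}$ with $\delta = \cirr(Z)/\ir(Z) - 1$, after translating the centre of $Z$ to $o$ and rescaling by $1/\ir(Z)$, we obtain a zonotope $Z' \in \mathcal{Z}_{d,n}$ satisfying $\BB^d \subseteq Z' \subseteq (1+\delta)\BB^d$. Conversely, any zonotope with such a two-sided inclusion has $\cirr/\ir - 1 \leq \delta$. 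This yields a clean equivalence between the quantity minimised in (\ref{eq:ircr}) and the function $N(\cdot)$ defined right before (\ref{eq:BLM}).

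For the lower bound, I would pick any $Z \in \mathcal{Z}_{d,n}$ with $\delta = \cirr(Z)/\ir(Z) - 1$; by the paragraph above, $N(\delta) \leq n$, and the left inequality in (\ref{eq:BLM}) then forces $c_1 \delta^{-2(d-1)/(d+2)} \leq n$, i.e.\ $\delta \geq c(d)\, n^{-(d+2)/(2(d-1))}$. Taking the infimum over $Z \in \mathcal{Z}_{d,n}$ gives the left inequality of (\ref{eq:ircr}).

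For the upper bound, I would choose $\varepsilon$ so that the right-hand bound of (\ref{eq:BLM}) still guarantees $N(\varepsilon) \leq n$, and then invoke (\ref{eq:BLM}) to obtain some $Z \in \mathcal{Z}_{d,n}$ with $\BB^d \subseteq Z \subseteq (1+\varepsilon)\BB^d$, whence $\cirr(Z)/\ir(Z) - 1 \leq \varepsilon$. For $d = 2$ or $d \geq 5$, inverting $c_2 \varepsilon^{-2(d-1)/(d+2)} = n$ yields $\varepsilon \asymp n^{-(d+2)/(2(d-1))}$, which matches $U_d(n)$. For $d \in \{3,4\}$, one has to invert the mixed expression $c_2 (\varepsilon^{-2}|\log \varepsilon|)^{(d-1)/(d+2)} = n$; a standard log-bootstrap argument gives $\varepsilon \asymp \sqrt{\log n}/n^{(d+2)/(2(d-1))}$, again matching the definition of $U_d(n)$.

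No substantial obstacle is anticipated. The only minor point is the elementary inversion of the logarithmic bound in dimensions $3$ and $4$, and a check that translation and scaling preserve the generating-segment count, so that membership in $\mathcal{Z}_{d,n}$ is unaffected; both are routine.
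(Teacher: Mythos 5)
Your proposal is correct and fills in exactly what the paper intends when it writes that Theorem~\ref{thm:ircr} ``is an immediate consequence of (\ref{eq:BLM})'' and omits the proof: use the central symmetry of zonotopes (already noted in Section~\ref{sec:prelim}, where the paper observes that $2\cirr(Z)$ and $2\ir(Z)$ are the diameter and minimum width) to reduce $\cirr/\ir$ to the two-sided inclusion $\BB^d \subseteq Z' \subseteq (1+\varepsilon)\BB^d$, then invert (\ref{eq:BLM}), with the routine log-bootstrap in dimensions $3$ and $4$. No gap; this is the same argument.
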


In the remaining part of this section we investigate similar problems. More specifically, in Subsections~\ref{subsec:int_ir} and \ref{subsec:int_cr} we estimate the intrinsic volumes of a zonotope $Z \in \mathcal{Z}_{d,n}$ with a given inradius or circumradius, respectively. Finally, in Subsection~\ref{subsec:AF} we compare two intrinsic volumes of a zonotope in $\mathcal{Z}_{d,n}$. %{\color{blue} In our proofs we often rely on the well-known solution of these problems in the planar case.

%\begin{rem}\label{rem:2d}
%The elements of the set $\mathcal{Z}_{2,n}$, where $n \geq 3$, are the centrally symmetric convex polygons with at most $2n$ sides. By the Discrete Isoperimetric Inequality and Dowker's theorems, we have that $\min \{ V_2(Z) : Z \in \mathcal{Z}_{2,n}, \ir{Z} = 1 \} = \pi \cdot \frac{\tan \frac{\pi}{2n}}{\frac{\pi}{2n}}$, $\max \{ \perim(Z) : Z \in \mathcal{Z}_{2,n} , \cirr(Z) = 1 \} = 4n \sin \frac{\pi}{2n}$, and $\left\{ V_2(Z) : Z \in \mathcal{Z}_{2,n}, \perim(Z)=1 \right\}$
%\end{rem}

\subsection{Intrinsic volumes of a zonotope with a given inradius}\label{subsec:int_ir}

Our main result in this {\zl subsection} is Theorem~\ref{thm:irint}.

\begin{thm}\label{thm:irint}
Let $1 \leq i \leq d$. Then there is a positive constant $C=C(d)$ depending only on $d$ such that for any sufficiently large value of $n$, we have
\[
\frac{4 i}{5d n^2} \leq \min \left\{ \frac{V_i(Z)}{V_i(\BB^d)} - 1: Z \in \mathcal{Z}_{d,n}, \ir(Z)= 1 \right\} \leq
C U_d(n).
\]
\end{thm}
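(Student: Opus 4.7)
I would invoke the approximation result in (\ref{eq:BLM}): for each $n$ there exists $Z_0 \in \mathcal{Z}_{d,n}$ with $\BB^d \subseteq Z_0 \subseteq (1+\varepsilon_n)\BB^d$ and $\varepsilon_n \leq C_1 U_d(n)$. Rescaling by $1/\ir(Z_0) \leq 1$ yields $Z \in \mathcal{Z}_{d,n}$ satisfying $\ir(Z)=1$ and still $Z \subseteq (1+\varepsilon_n)\BB^d$. By monotonicity and $i$-homogeneity of $V_i$,
\[
\frac{V_i(Z)}{V_i(\BB^d)} - 1 \leq (1+\varepsilon_n)^i - 1 \leq C(d) \cdot U_d(n),
\]
which is the claimed upper bound.

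\textbf{Lower bound: reduction to $i=d$.} Aleksandrov's inequalities for intrinsic volumes (see e.g.~\cite{Schneider}) give, for every convex body $K \subset \Re^d$,
\[
\left(\frac{V_i(K)}{V_i(\BB^d)}\right)^{1/i} \geq \left(\frac{V_d(K)}{V_d(\BB^d)}\right)^{1/d}.
\]
Hence, once I establish
\[
\frac{V_d(Z)}{V_d(\BB^d)} \geq 1 + \frac{c_d}{n^2} \qquad (\star)
\]
for all $Z \in \mathcal{Z}_{d,n}$ with $\ir(Z)=1$ and some constant $c_d > 4/5$ valid for $n \geq n_0(d)$, raising to the $i/d$-th power and expanding gives $V_i(Z)/V_i(\BB^d) - 1 \geq (i/d)(c_d/n^2) - O(n^{-4}) \geq 4i/(5dn^2)$ for $n$ sufficiently large.

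\textbf{Proof of $(\star)$.} Translate $Z = \sum_{j=1}^n [o, p_j]$ so its inball equals $\BB^d$; then every facet is tangent to $\BB^d$ at its outer unit normal $u_j$. Since each facet of $Z$ is a translate of a $(d-1)$-dimensional zonotope generated by $d-1$ of the $p_j$, the number of facets is $m \leq 2\binom{n}{d-1}$. Decomposing $Z$ into cones from $o$ over its facets,
\[
V_d(Z) - V_d(\BB^d) = \sum_{j=1}^m \int_{\Omega_j} \frac{\sec^d \theta_j(u) - 1}{d}\, d\sigma(u),
\]
where $\Omega_j \subseteq \Sph^{d-1}$ is the radial image of the $j$-th facet from $o$ and $\theta_j(u) = \arccos \langle u, u_j \rangle$. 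The integrand is nonnegative, vanishes at $\theta=0$, and is increasing and convex in $\theta$, so the spherical isoperimetric inequality reduces each summand to its value on a spherical cap of area $|\Omega_j|$ about $u_j$; the reduced quantity is in turn a convex function of $|\Omega_j|$. Jensen's inequality under the constraint $\sum_j |\Omega_j| = d\kappa_d$ then forces the minimum to occur at equal cells, yielding a Macbeath-type bound of the form $V_d(Z) - V_d(\BB^d) \geq c_d \cdot m^{-2/(d-1)}$. Substituting $m \leq 2\binom{n}{d-1}$ gives $(\star)$. In $d=2$ the extremum is the regular circumscribed $2n$-gon, for which $V_2 = 2n\tan(\pi/(2n)) = \pi + \pi^3/(12n^2) + O(n^{-4})$, so $c_2 = \pi^2/12 > 4/5$; for $d \geq 3$ the analogous extremization (circumscribed to the unit ball, with cells of equal spherical area) produces a suitable $c_d > 4/5$ in each fixed dimension, for $n$ large.

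\textbf{Main obstacle.} The technical heart of the proof is tracking the explicit constant $c_d$ in the Macbeath-type estimate of $(\star)$ carefully enough to verify $c_d > 4/5$ in every dimension. The qualitative mechanism --- convex extremization cell by cell combined with the spherical isoperimetric inequality --- is standard, but the sphere-packing/covering-density input on $\Sph^{d-1}$ governing the absolute constant in dimensions $d \geq 3$ is where the most delicate bookkeeping is required.
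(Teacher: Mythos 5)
Your upper-bound argument and your reduction of the lower bound to the case $i=d$ via Alexandrov--Fenchel both match the paper exactly. The place where you genuinely diverge is in establishing $(\star)$, i.e.\ the estimate $V_d(Z) \geq \kappa_d(1 + c_d/n^2)$ for $Z \in \mathcal{Z}_{d,n}$ with $\ir(Z)=1$.

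The paper's route (Lemma~\ref{lem:irvol}) is an induction on the dimension: the base case $d=2$ is the circumscribed regular $2n$-gon, and the induction step uses that every projection $Z|u^\perp$ is again a zonotope in $\mathcal{Z}_{d-1,n}$ with inradius $\geq 1$, feeds this into Cauchy's surface area formula to obtain $\surf(Z) \geq d\kappa_d(1+\pi^2/(12n^2))$, and finishes with the cone decomposition $\surf(Z) \leq d V_d(Z)$. The crucial payoff of this route is that the constant $\pi^2/12$ is \emph{dimension-independent}, so $\pi^2/12 > 4/5$ is checked once and the theorem follows for all $d$. Your route is a direct facet-counting argument combined with a Macbeath/Gruber-type bound for polytopes circumscribed about a ball: with $m \leq 2\binom{n}{d-1}$ facets, $m^{-2/(d-1)}$ is indeed $\Theta(n^{-2})$, so the order of magnitude is correct. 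But the resulting constant is $d$-dependent, being a product of the Gruber-type constant $\gamma_d$ (governed by covering densities of $\Sph^{d-1}$) and a factor $\left((d-1)!/2\right)^{2/(d-1)}$ from the facet count. Verifying that this product exceeds $4/5$ for every fixed $d$ is nontrivial, requires a non-asymptotic lower-bound version of the Gruber result (the clean formulas are asymptotic as $m\to\infty$ and not obviously one-sided for finite $m$), and is precisely where your sketch stops. You acknowledge this as the main obstacle, which is fair; the point is that the paper's induction avoids this difficulty entirely.

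Two smaller technical remarks on your sketch of $(\star)$: the step replacing $\Omega_j$ by a cap of equal measure about $u_j$ is a rearrangement/bathtub-principle argument (the integrand $\sec^d\theta - 1$ is an increasing function of $\theta$), not the spherical isoperimetric inequality; and the claim that the cap-value is a convex function of the cap's measure, needed for the Jensen step, is asserted but not justified. Neither is a fatal flaw, but both would need to be written out carefully, and even then you would still face the constant-tracking problem described above. So: your overall structure is sound and the upper bound is complete, but the key lemma as you propose to prove it leaves a genuine gap that the paper's inductive argument is specifically designed to sidestep.
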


To prove it we need the following lemma.

\begin{lem}\label{lem:irvol}
Let {\zl $2 \leq d \leq n$}. Then we have
\[
\min \left\{ V_d(Z) : Z \in \mathcal{Z}_{d,n},  \ir(Z) \geq 1 \right\}  \geq \kappa_d \left( 1+ \frac{\pi^2}{12 n^2} \right).
\]
\end{lem}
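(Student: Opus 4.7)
\emph{Strategy.} The plan is to prove the lemma by induction on $d$, reducing the $d$-dimensional problem to the $(d-1)$-dimensional one by means of Cauchy's projection formula and the elementary inequality $V_d(K) \geq \frac{1}{d}\ir(K)\surf(K)$ obtained by decomposing $K$ into pyramids over its facets with apex at the incenter.

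\emph{Base case $d=2$.} A zonotope $Z \in \mathcal{Z}_{2,n}$ with $\ir(Z) \geq 1$ is a centrally symmetric convex polygon with at most $2n$ sides containing the unit disc. I would first reduce to the case that $Z$ is circumscribed about the unit disc by translating each non-tangent side parallel to itself toward the disc; this only decreases area, preserves the disc inside, and does not increase the number of sides. Jensen's inequality applied to the central angles subtended by the sides then yields $V_2(Z) \geq N \tan(\pi/N)$, where $N \leq 2n$ is the resulting side count. Since $x \mapsto x\tan(\pi/x)$ is decreasing on $(2,\infty)$, this gives $V_2(Z) \geq 2n \tan(\pi/(2n))$, and the elementary estimate $\tan x \geq x + x^3/3$ on $(0,\pi/2)$ produces $V_2(Z) \geq \pi + \pi^3/(12n^2) = \kappa_2 \bigl(1 + \pi^2/(12n^2)\bigr)$.

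\emph{Inductive step.} Suppose the lemma holds in dimension $d-1$, and let $Z \in \mathcal{Z}_{d,n}$ with $\ir(Z) \geq 1$; by translation I may assume the incenter of $Z$ is $o$, so $\BB^d \subseteq Z$. For every $u \in \Sph^{d-1}$, the projection $Z|u^\perp$ is a zonotope in $u^\perp$ generated by (the projections of) at most $n$ segments, and it contains $\BB^d|u^\perp = \BB^{d-1}$, so its inradius is at least $1$; the inductive hypothesis therefore yields $V_{d-1}(Z|u^\perp) \geq \kappa_{d-1}\bigl(1+\pi^2/(12n^2)\bigr)$ for all such $u$. Cauchy's projection formula, which follows from Remark~\ref{rem:projectionbody} by identifying $V_1(\Pi K)$ with $\surf(K)$, reads
\[
\surf(Z) = \frac{1}{\kappa_{d-1}} \int_{\Sph^{d-1}} V_{d-1}(Z|u^\perp) \, du \geq d\kappa_d \Bigl(1 + \frac{\pi^2}{12 n^2}\Bigr),
\]
where the inequality uses $|\Sph^{d-1}| = d\kappa_d$. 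The pyramid inequality and $\ir(Z) \geq 1$ then finish the proof: $V_d(Z) \geq \frac{1}{d}\surf(Z) \geq \kappa_d\bigl(1+\pi^2/(12n^2)\bigr)$.

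\emph{Main difficulty.} The sharp constant $\pi^2/12$ is transmitted from the base case without any loss through the induction, since both Cauchy's formula and the pyramid inequality become equalities for the unit ball. The delicate point is therefore entirely in the planar estimate: one must carefully justify the reduction to polygons circumscribed about the unit disc (without this step one would only obtain a weaker bound), and then extract the cubic coefficient in the expansion of $\tan$ to recover precisely $\pi^2/12$.
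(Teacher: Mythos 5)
Your proof is correct and takes essentially the same route as the paper: the base case $d=2$ is handled by reducing to a polygon circumscribed about the unit disc and comparing with the regular $(2n)$-gon, and the inductive step combines Cauchy's surface area formula applied to the $(d-1)$-dimensional projections with the pyramid inequality $\surf(Z)\leq d\,V_d(Z)$. The only difference is that you spell out the justification of the planar minimization (slide-to-tangency plus Jensen on the central angles), which the paper leaves implicit with the phrase ``we clearly have.''
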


\begin{proof}
First, let $d=2$. Consider any $Z \in \mathcal{Z}_{2,n}$ with $\ir(Z) \geq 1$. Without loss of generality, assume that $\BB^2 \subset Z$. Then, we clearly have that the minimum of $V_2(Z)$ is the area of the regular $(2n)$-gon circumscribed about $\BB^2$. Thus, an elementary computation shows that $V_2(Z) \geq 2n \tan \frac{\pi}{2n} = \pi \cdot \frac{\tan \frac{\pi}{2n}}{\frac{\pi}{2n}}$.
On the other hand, from the third order Taylor polynomial of the tangent function with the Lagrange remainder form, and using the fact that $(\tan x)^{(4)} > 0$ for $x \in \left( 0, \frac{\pi}{2} \right)$, it follows that $\tan x > x + \frac{1}{3}x^3$. Hence, we obtain that
\[
V_2(Z) > \kappa_2 \left( 1 + \frac{\pi^2}{12 n^2} \right).
\]

In the remaining part of the proof we apply an induction on $d$.
Let $Z \in \mathcal{Z}_{d,n}$ with $\ir(Z) \geq 1$. Assume that $\BB^d \subset Z$. For any unit vector $u \in \Sph^{d-1}$, let $Z | u^{\perp}$ denote the orthogonal projection of $Z$ onto the hyperplane through $o$ and with normal vector $u$. Note that for any $u \in \Sph^{d-1}$, $Z | u^{\perp}$ is a zonotope in $\mathcal{Z}_{d-1,n}$ satisfying $\ir(Z) \geq 1$. Thus, by the induction hypothesis, $V_{d-1}(Z | u^{\perp}) \geq \kappa_{d-1}  \left( 1+ \frac{\pi^2}{12 n^2} \right)$. Recall that by Cauchy's surface area formula \cite{Gardner}, we have
\[
\surf(Z) = \frac{1}{\kappa_{d-1}} \int_{\Sph^{d-1}} V_{d-1}(Z | u^{\perp}) \, du,
\]
where $\surf(Z)$ denotes the surface area of $Z$.
Then, applying the previous estimate and the fact that the surface area of $\Sph^{d-1}$ is $d \kappa_d$, after simplification we obtain that
\begin{equation}\label{eq:surface}
\surf(Z) \geq d\kappa_d \left( 1+ \frac{\pi^2}{12 n^2} \right).
\end{equation}
Now, let $F$ be any facet of $Z$. Then, by the convexity of $Z$ and since $\BB^d \subset Z$, the distance of $\aff(F)$ from $o$ is at least $1$. Thus, $V_d(\conv (F \cup \{o\})) \geq \frac{1}{d} V_{d-1}(F)$, implying that $\surf(Z) \leq dV_d(Z)$.
This, combined with (\ref{eq:surface}), yields the assertion.
%$V_d(Z) \geq \frac{1}{d} V_{d-1}(Z) \geq \frac{1}{2d} surf(Z) \geq \frac{1}{2} \kappa_d \left( 1+ \frac{\pi^2}{12 n^2} \right)$
\end{proof}

\begin{proof}[Proof of Theorem~\ref{thm:irint}]
Let $Z$ be an arbitrary zonotope in $\mathcal{Z}_{d,n}$ satisfying $\ir(Z)=1$. By the Alexandrov-Fenchel inequality for intrinsic volumes, we have that
\begin{equation}\label{eq:AF}
\left( \frac{V_d(Z)}{V_d(\BB^d)} \right)^{\frac{1}{d}} \leq \left( \frac{V_{d-1}(Z)}{V_{d-1}(\BB^d)} \right)^{\frac{1}{d-1}} \leq \ldots \leq \frac{V_1(Z)}{V_1(\BB^d)}.
\end{equation}
Thus,
\[
\left( \frac{V_d(Z)}{V_d(\BB^d)} \right)^{\frac{i}{d}}-1 \leq \frac{V_{i}(Z)}{V_{i}(\BB^d)}- 1 \leq \left( \frac{V_1(Z)}{V_1(\BB^d)} \right)^{i}-1.
\]
By Lemma~\ref{lem:irvol} and using that $\lim_{x \to 0} \frac{(1+x)^{\alpha}-1}{x} = \alpha$ and that $\frac{\pi^2}{12} > \frac{4}{5}$, for sufficiently large values of $n$, we have
\[
{\zl \frac{4i}{5dn^2} \leq \left( \frac{V_d(Z)}{V_d(\BB^d)} \right)^{\frac{i}{d}}-1 \leq \frac{V_{i}(Z)}{V_{i}(\BB^d)}- 1.}
\]
This yields the left-hand side inequality in Theorem~\ref{thm:irint}. To prove the right-hand side inequality, observe that if
$Z$ is a zonotope in $\mathcal{Z}_{d,n}$ with $\BB^d \subseteq Z \subseteq (1+\varepsilon) \BB^d$, then $2 \leq w(Z) \leq 2(1+\varepsilon)$. This, combined with
Theorem~\ref{thm:ircr}, yields that if $n$ is sufficiently large, then
\[
\left( \frac{V_1(Z)}{V_1(\BB^d)} \right)^{i}-1 \leq C U_d(n)
\]
for some suitably chosen $Z \in \mathcal{Z}_{d,n}$ with $\ir(Z)=1$.
\end{proof}

\subsection{Intrinsic volumes of a zonotope with a given circumradius}\label{subsec:int_cr}

We prove Theorem~\ref{thm:cirrint}.

\begin{thm}\label{thm:cirrint}
Let $1 \leq i \leq d$. Then there is a positive constant {\zl $C=C(d)$} such that for any sufficiently large value of $n$,
\[
\frac{ 2 i}{5 n^2} \leq \min \left\{ 1 - \frac{V_i(Z)}{V_i(\BB^d)} : Z \in \mathcal{Z}_{d,n}, \cirr(Z)= 1 \right\} \leq C U_d(n).
\]
\end{thm}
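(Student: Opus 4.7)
The plan is to prove the two inequalities separately, in a way that is parallel and dual to the proof of Theorem~\ref{thm:irint}. For the upper bound, start from a zonotope $Z_0 \in \mathcal{Z}_{d,n}$ satisfying $\BB^d \subseteq Z_0 \subseteq (1+\varepsilon)\BB^d$ with $\varepsilon \leq C_0 U_d(n)$, as guaranteed by (\ref{eq:BLM}), and set $Z := Z_0/\cirr(Z_0)$. Then $\cirr(Z)=1$ and $Z \supseteq (1+\varepsilon)^{-1}\BB^d$, so by monotonicity and $i$-homogeneity of $V_i$,
\[
1 - \frac{V_i(Z)}{V_i(\BB^d)} \leq 1 - (1+\varepsilon)^{-i} \leq i\varepsilon \leq d C_0 U_d(n).
\]

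The lower bound requires more work. The key claim is a mean-width counterpart of Lemma~\ref{lem:irvol}: for every $Z \in \mathcal{Z}_{d,n}$ with $\cirr(Z) \leq 1$,
\[
w(Z) \leq \frac{4n}{\pi} \sin\frac{\pi}{2n}, \quad \hbox{equivalently} \quad V_1(Z) \leq V_1(\BB^d)\left(1 - \frac{\pi^2}{24 n^2} + O(n^{-4})\right),
\]
where the equivalence uses Remark~\ref{rem:meanwidth} together with the Taylor expansion of $\sin$. To prove this, first treat $d=2$: here $Z' \in \mathcal{Z}_{2,n}$ with $\cirr(Z')\leq 1$ is a centrally symmetric $2k$-gon ($k\leq n$) contained in $\BB^2$. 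Define $v_j' := v_j/|v_j|$ for each vertex $v_j$ of $Z'$ and set $(Z')' := \conv\{v_j'\}$. Since $Z'$ is centrally symmetric with $o$ in its interior and $|v_j|\le 1$, each $v_j = |v_j|v_j' + (1-|v_j|)o$ is a convex combination of points of $(Z')'$ (with $o \in (Z')'$ since both $v_j'$ and $-v_j'$ belong to $(Z')'$), so $Z' \subseteq (Z')'$. Monotonicity of perimeter under convex inclusion, combined with Jensen's inequality for the concave function $\sin$ applied to the angles $\phi_j$ at $o$ subtended by the consecutive edges of $(Z')'$ (satisfying $\sum \phi_j = 2\pi$ and giving edge lengths $2\sin(\phi_j/2)$), gives $\perim(Z') \leq 4k\sin(\pi/(2k)) \leq 4n\sin(\pi/(2n))$, i.e.\ $V_1(Z') \leq 2n\sin(\pi/(2n))$.

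Next, the lift to arbitrary $d$: for every 2-plane $L \in G(d,2)$, the orthogonal projection $Z|L$ is a 2D zonotope with at most $n$ generators and $\cirr(Z|L)\leq 1$, so the 2D bound gives $w(Z|L) \leq (4n/\pi)\sin(\pi/(2n))$. The standard identity
\[
w(Z) = \int_{G(d,2)} w(Z|L)\, d\nu(L),
\]
with $\nu$ the normalized invariant measure on $G(d,2)$, follows by disintegrating the uniform measure on $\Sph^{d-1}$ along great circles, and immediately yields the same bound for $w(Z)$. Finally, applying (\ref{eq:AF}),
\[
\frac{V_i(Z)}{V_i(\BB^d)} \leq \left(\frac{V_1(Z)}{V_1(\BB^d)}\right)^i \leq \left(1 - \frac{\pi^2}{24n^2} + O(n^{-4})\right)^i,
\]
so using $1 - (1-y)^i \geq iy(1-(i-1)y/2)$ together with the numerical inequality $\pi^2/24 > 2/5$, we conclude $1 - V_i(Z)/V_i(\BB^d) \geq 2i/(5n^2)$ for sufficiently large $n$.

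The main obstacle is the tight numerical margin between $\pi^2/24 \approx 0.411$ and $2/5 = 0.4$: the lower-order corrections coming from the Taylor expansion of $\sin$ and from the binomial expansion of $(1-y)^i$ must be dominated, which is why the statement only holds for $n$ sufficiently large.
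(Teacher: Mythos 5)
Your proof is correct and follows essentially the same route as the paper's: both rest on proving a sharp upper bound on $V_1$ for a zonotope of circumradius $1$ (this is the paper's Lemma~\ref{lem:cirrw}) by first treating $d=2$ and then lifting to general $d$ via an integral-geometric averaging formula, then feeding this into the Alexandrov--Fenchel chain $V_i/V_i(\BB^d) \le (V_1/V_1(\BB^d))^i$, while the upper bound in both cases comes from rescaling a near-ball zonotope supplied by (\ref{eq:BLM}). The only cosmetic differences are that you replace the paper's appeal to Dowker's theorem in the planar case with a direct radial-projection plus Jensen argument, and you invoke the mean-width average over $\mathcal{G}(d,2)$ rather than Kubota's recursion (\ref{eq:Kubota}), which is in fact the same identity specialized to $i=1$, $k=2$.
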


\begin{lem}\label{lem:cirrw}
Let $d \geq 2$ and $n \geq d$. Then, for any $Z \in \mathcal{Z}_{n,d}$ with $\cirr(Z)\geq 1$, we have
\[
1-\frac{V_1(Z)}{V_1(\BB^d)} \geq \frac{\pi^2}{24n^2} - \frac{\pi^4}{1920n^4}.
\]
\end{lem}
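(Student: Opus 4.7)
The right-hand side $\pi^2/(24n^2) - \pi^4/(1920n^4)$ coincides with the first two terms of the Taylor series of $1-\sin(\pi/(2n))/(\pi/(2n))$, and this value equals $1-V_1(Z')/V_1(\BB^2)$ for the regular $2n$-gon $Z'$ inscribed in the unit disk in $\Re^2$. My plan is therefore to prove the stronger bound $V_1(Z)/V_1(\BB^d)\le \sin(\pi/(2n))/(\pi/(2n))$ (for $Z$ normalized so that $\cirr(Z)\le 1$, which covers the inequality of interest) by first handling the planar case and then reducing to it via projection onto random $2$-planes. For $d=2$, any $Z'\in\mathcal Z_{2,n}$ with $\cirr(Z')\le 1$ is, up to translation, a centrally symmetric $2m$-gon ($m\le n$) contained in $\BB^2$, whose half-perimeter equals $V_1(Z')$. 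I would invoke the classical isoperimetric fact that among convex $N$-gons contained in a disk of radius $R$, the regular inscribed one uniquely maximizes perimeter $2NR\sin(\pi/N)$; together with the monotonicity of $m\mapsto m\sin(\pi/(2m))$ in $m$, this gives $V_1(Z')\le 2n\sin(\pi/(2n))$.

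For general $d$, given any $2$-dimensional linear subspace $V\subseteq\Re^d$, the projection $Z\mid V$ lies in $\mathcal Z_{2,n}$ and satisfies $\cirr(Z\mid V)\le\cirr(Z)\le 1$ because orthogonal projections are $1$-Lipschitz, so the planar case gives $V_1(Z\mid V)\le 2n\sin(\pi/(2n))$ uniformly in $V$. I then take the expectation in $V$ against the Haar measure on the Grassmannian of $2$-planes and use the integral-geometric identity
\[
\mathbf E_V\bigl[V_1(K\mid V)\bigr] \,=\, \frac{\pi}{V_1(\BB^d)}\,V_1(K),
\]
valid for any convex body $K\subset\Re^d$. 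This identity follows from $V_1(K\mid V)=\tfrac{\pi}{2}w(K\mid V)$ in the plane $V$, from $\mathbf E_V[w(K\mid V)]=w(K)$ (a Fubini-plus-$O(d)$-invariance observation: a uniform direction in $V\cap\Sph^{d-1}$ for Haar-random $V$ is uniform on $\Sph^{d-1}$), and from Remark~\ref{rem:meanwidth}. Applied to $K=Z$ it yields the target $V_1(Z)/V_1(\BB^d)\le \sin(\pi/(2n))/(\pi/(2n))$.

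The lemma then follows from the elementary Leibniz estimate $1-\sin x/x\ge x^2/6-x^4/120$ (valid for $x>0$ since the Taylor series of $\sin x/x$ is alternating with decreasing terms on $(0,\pi)$), evaluated at $x=\pi/(2n)$. I expect the only genuinely nontrivial ingredient to be the classical planar isoperimetric fact used in the planar step; the projection-plus-averaging reduction and the final Taylor estimate are routine.
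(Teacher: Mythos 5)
Your proof follows essentially the same strategy as the paper's: the planar bound is Dowker's theorem applied to the regular $2n$-gon inscribed in $\BB^2$, and the passage to higher dimensions is via integral-geometric averaging over $2$-planes (the identity you derive from mean-width considerations is exactly the $i=1$, $k=2$ case of Kubota's recursion formula, which the paper cites directly). You also (correctly) read the hypothesis as $\cirr(Z)\le 1$ rather than the stated $\cirr(Z)\ge 1$; the paper's own proof makes the same silent correction by assuming $Z\subseteq\BB^d$.
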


\begin{proof}
First, we prove the inequality for $d=2$. Let $Z \in \mathcal{Z}_{n,2}$ with $\cirr(Z)\geq 1$, and assume that $Z \subset \BB^2$. By Dowker's theorem, we have that
the perimeter of $Z$ is at most equal to the perimeter of the regular $(2n)$-gon inscribed in $\BB^2$. Thus, we have $\perim(Z) \leq 4n \sin \frac{\pi}{2n}$.
Note that $V_1(Z) = \frac{1}{2} \perim (Z)$ (cf. \cite[p.210]{Schneider}). Thus, applying the Taylor-expansion of the sine function, we have that
\[
V_1(Z) \leq \pi \left( 1 - \frac{\pi^2}{24n^2} + \frac{\pi^4}{1920n^4}\right).
\]
Since $V_1(\BB^d) = \frac{d \kappa_d}{\kappa_{d-1}}$, and in particular $V_1(\BB^2) = \pi$, this implies the assertion for $d=2$.
To prove it for $d > 2$, we recall Kubota's integral recursion formula from \cite[A.46]{Gardner}. To state it, we introduce the notation $\mathcal{G}(d,k)$ for the Grassmannian manifold of the $k$-dimensional linear subspaces of $\Re^d$, $K | S$ for the orthogonal projection of a compact, convex set $K \subset \Re^d$ to some $S \in \mathcal{d,k}$, and $\mu_{d,k}(\cdot)$ for the unique Haar probability measure of $\mathcal{G}(d,k)$.
Then this formula states that if $K$ is any compact, convex set in $\Re^d$, and $1 \leq i \leq k \leq d-1$, then
\begin{equation}\label{eq:Kubota}
V_i(K) = \frac{\binom{d}{i} \kappa_{k-i} \kappa_d}{\binom{k}{i} \kappa_{d-i} \kappa_k} \int_{\mathcal{G}_{d,k}} V_i(K | S) \, d \mu_{d,k}(S).
\end{equation}
Let $Z \in \mathcal{Z}_{d,n}$ be a zonotope with $\cirr(Z) \leq 1$. Then, for any $S \in \mathcal{G}_{d,2}$, $Z | S$ is a zonotope in $Z_{2,n}$ with $\cirr(Z | S) \leq 1$. Thus, $V_1(Z | S) \leq \pi \left( 1 - \frac{\pi^2}{24n^2} + \frac{\pi^4}{1920n^4}\right)$ for any $S \in \mathcal{G}_{d,2}$.
Now, applying (\ref{eq:Kubota}) for $Z$ with $i=1$ and $k=2$, we obtain that
\[
V_1(Z) \geq \frac{d \kappa_d}{\kappa_{d-1}} \left( 1 - \frac{\pi^2}{24n^2} + \frac{\pi^4}{1920n^4}\right),
\]
from which the assertion readily follows.
\end{proof}

\begin{proof}[Proof of Theorem~\ref{thm:cirrint}]
By the Alexandrov-Fenchel inequality for any zonotope $Z \in \mathcal{Z}_{d,n}$ with $\cirr(Z) = 1$, we have
\[
1-\left( \frac{V_d(Z)}{V_d(\BB^d)} \right)^{\frac{i}{d}} \geq 1-\frac{V_{i}(Z)}{V_{i}(\BB^d)} \geq  1-\left( \frac{V_1(Z)}{V_1(\BB^d)} \right)^{i}.
\]
Thus, the inequalities in Theorem~\ref{thm:cirrint} follow by an argument analogous to that in the proof of Theorem~\ref{thm:irint}.
\end{proof}

{\zl
\begin{rem}
It was shown in \cite[Theorem 1]{AN} (and remarked also in \cite{AM}) that
\[
M_n^1(\Sph^{d-1}) = n \mu_{d,1} + o \left( \frac{n}{\sqrt{d}}\right)
\]
if $n,d \to \infty$ and $n = \omega \left( d^2 \log d \right)$, where $\mu_{d,1} = \frac{\Gamma\left( \frac{d}{2} \right) }{\sqrt{\pi} \Gamma \left( \frac{d+1}{2} \right) }$.
Theorem~\ref{thm:cirrint} yields another estimate for this quantity. Namely, for any fixed $d \geq 2$, there is a constant $C=C(d)$ such that
\[
\left| M_n^1(\Sph^{d-1}) - n \mu_{d,1} \right| \leq C n^{\frac{d-4}{2d-2}} \sqrt{\log n} .
\]
\end{rem}

\begin{proof}
By Theorem~\ref{thm:cirrint}, there is a zonotope $Z \in \mathcal{Z}_{d,n}$ with $\cirr(Z) = 1$ satisfying
\[
1 - \frac{V_1(Z)}{V_1(\BB^d)} \leq C \frac{\sqrt{\log n}}{n^{\frac{d+2}{2d-2}}}
\]
where $C > 0$ may depend on $d$. Here, by \cite{BourgLind}, we may assume that $Z$ is an equilateral zonotope, i.e. that all generating segments of $Z$ are of length $t$ for some $t > 0$. The definition of {\zl intrinsic volume and} an elementary computation shows that $V_1(\BB^d) = \frac{d \kappa_d}{\kappa_{d-1}} = \frac{2 \sqrt{\pi} \Gamma \left( \frac{d+1}{2} \right)}{\Gamma \left( \frac{d}{2} \right)} = \frac{2}{\mu_{d,1}}$.
Let $Z' = \sum_{i=1}^n [o,p_i] = \frac{1}{t} Z$. Then $V_1(Z') = n$, and $\cirr(Z') = \frac{1}{t} = \frac{1}{2}\max_{v \in \Sph^{d-1}} \sum_{i=1}^n | \langle v,p_i \rangle|$, which implies that
\[
1 - \frac{ n\mu_{d,1} }{ 2 \cirr(Z')} \leq C \frac{\sqrt{\log n}}{n^{\frac{d+2}{2d-2}}}.
\]
From this inequality and the inequality $\frac{1}{1-x} < 1+2x$ holding for any sufficiently small $x > 0$, we obtain that
\[
2 \cirr (Z') \leq n \mu_{d,1} \left( 1 + 2 C \frac{\sqrt{\log n}}{n^{\frac{d+2}{2d-2}}} \right),
\]
which yields the assertion.
\end{proof}
}

\subsection{Comparing two intrinsic volumes of a zonotope}\label{subsec:AF}

Our main result here is Theorem~\ref{thm:intvols}.

\begin{thm}\label{thm:intvols}
Let $1 \leq i < k \leq d$. Then there are positive constants $c,C$ depending only on $d$ such that for any sufficiently large value of $n$,
\begin{equation}\label{eq:intvols}
 \frac{c}{n^{\frac{(d+2)(d+3)}{4d-4}}} \leq \min \left\{  \frac{\left( V_i(Z) \right)^{\frac{1}{i}}}{\left( V_k(Z) \right)^{\frac{1}{k}}} - \frac{\left( V_i(\BB^d) \right)^{\frac{1}{i}}}{\left( V_k(\BB^d) \right)^{\frac{1}{k}}} : Z \in \mathcal{Z}_{d,n} \right\} \leq \frac{C}{n}.
\end{equation}
Furthermore, there is a constant $\bar{c} > 0$ depending on $d$ such that
\begin{equation}\label{eq:intsurf}
\frac{\bar{c}}{n^2} \leq \min \left\{  \frac{\left( V_{d-1}(Z) \right)^{\frac{1}{{d-1}}}}{\left( V_d(Z) \right)^{\frac{1}{d}}} - \frac{\left( V_{d-1}(\BB^d) \right)^{\frac{1}{d-1}}}{\left( V_d(\BB^d) \right)^{\frac{1}{d}}} : Z \in \mathcal{Z}_{d,n} \right\} .
\end{equation}
\end{thm}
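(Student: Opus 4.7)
Both (\ref{eq:intvols}) and (\ref{eq:intsurf}) measure the defect
\[
\Delta_{i,k}(Z) := \frac{V_i(Z)^{1/i}}{V_k(Z)^{1/k}} - \frac{V_i(\BB^d)^{1/i}}{V_k(\BB^d)^{1/k}} \geq 0
\]
in the Alexandrov--Fenchel inequalities (\ref{eq:AF}), which vanishes only on Euclidean balls. My plan splits into a construction making $\Delta_{i,k}$ small (the upper bound) and a stability argument bounding $\Delta_{i,k}$ from below on the class $\mathcal{Z}_{d,n}$ (the two lower bounds).

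For the \emph{upper bound} $C/n$ in (\ref{eq:intvols}), I would take the zonotope $Z \in \mathcal{Z}_{d,n}$ of (\ref{eq:BLM}) together with the equilateral refinement of \cite{BourgLind}, satisfying $\BB^d \subseteq Z \subseteq (1+\varepsilon)\BB^d$ with $\varepsilon \leq C_1 U_d(n)$, and use the monotonicity of intrinsic volumes to get $V_j(\BB^d) \leq V_j(Z) \leq (1+\varepsilon)^j V_j(\BB^d)$ for every $j$; this yields $\Delta_{i,k}(Z) \leq C_2\varepsilon \leq C_2 U_d(n)$, which is $\leq C/n$ for $n$ large whenever $d\leq 3$. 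For $d\geq 4$, where $U_d(n) > 1/n$, I would instead build an equilateral zonotope whose $n$ unit generators come from a highly symmetric configuration (e.g.\ a spherical $t$-design on $\Sph^{d-1}$): the explicit formula of Corollary~\ref{cor:intrinsic_formula} together with $\binom{n}{j}^{1/j} = n/(j!)^{1/j}\cdot(1+O(1/n))$ lets the symmetric sums of wedge products in numerator and denominator cancel against each other, leaving only the purely combinatorial $O(1/n)$ error.

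For the \emph{lower bound} in (\ref{eq:intvols}), the plan is to prove a stability version of Alexandrov--Fenchel of the form
\[
\Delta_{i,k}(K) \;\geq\; c_3\,\Bigl(\tfrac{\cirr(K)}{\ir(K)}-1\Bigr)^{(d+3)/2}
\]
valid for every convex body $K\subset\Re^d$, and combine it with Theorem~\ref{thm:ircr}, which gives $\cirr(Z)/\ir(Z) - 1 \geq c_4 n^{-(d+2)/(2d-2)}$ for $Z\in\mathcal{Z}_{d,n}$; the product of the two exponents is exactly $(d+2)(d+3)/(4(d-1))$. To prove the stability inequality I would expand the support function of $K$ around that of the concentric ball of radius $\tfrac{1}{2}(\cirr(K)+\ir(K))$ in spherical harmonics, then use the second-order variation formulas for the intrinsic volumes and Parseval on $\Sph^{d-1}$ to express $\Delta_{i,k}(K)$ in terms of the $L^{2}$--norm of the non-trivial harmonic modes, while $\cirr/\ir-1$ is essentially the $L^{\infty}$--norm. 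The atypical exponent $(d+3)/2$ would then come from a Sobolev-type interpolation between these two norms; extracting this exponent uniformly in $d$ is the main technical obstacle I foresee.

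For the \emph{lower bound} $\bar c/n^{2}$ in (\ref{eq:intsurf}), I would exploit the one-sided bounds of Theorems~\ref{thm:irint} and~\ref{thm:cirrint}: whenever $\cirr(Z)=1$ one has $V_i(Z) \leq V_i(\BB^d)(1-2i/(5n^{2}))$, and whenever $\ir(Z)=1$ one has $V_i(Z) \geq V_i(\BB^d)(1+4i/(5dn^{2}))$, for every $1\leq i\leq d$. Applied to the pair $(i,k)=(d-1,d)$ and combined with the classical isoperimetric inequality together with its Bonnesen-type stability in convex-body form, a careful expansion in $1/n^{2}$ produces an additive defect of order $1/n^{2}$ in the ratio $V_{d-1}(Z)^{1/(d-1)}/V_d(Z)^{1/d}$, independent of the dimension. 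The key difficulty here is that the one-sided bounds for numerator and denominator are of the same order in $n$, so the defect emerges only after a sharp stability version of the isoperimetric inequality is used to bound the isoperimetric deficit itself from below away from zero.
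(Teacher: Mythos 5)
Your overall architecture for the lower bound in (\ref{eq:intvols}) matches the paper's: normalize so the Steiner ball is $\BB^d$, use a stability version of Alexandrov--Fenchel to bound the defect from below by a power of the Hausdorff distance, and then plug in Theorem~\ref{thm:ircr}. But the paper does not try to prove a stability inequality for every pair $(i,k)$, let alone from scratch: it quotes the known stability estimate from Schneider (inequality (7.123) and Theorem 7.6.6), which handles only the $i=1,k=2$ case by bounding $1-V_2(Z)/V_2(\BB^d)$ from below by $(\delta_2(Z,\BB^d))^2$ in the $L^2$ metric on support functions, and then separately quotes Schneider's Lemma 7.6.5 (a Groemer--Schneider-type estimate) to pass from $\delta_2^2$ to $c'\,\delta^{(d+3)/2}$ in Hausdorff distance. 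The general $(i,k)$ case is then obtained by a short chain of Alexandrov--Fenchel log-concavity inequalities that express $\Delta_{i,i+1}$ in terms of $\Delta_{1,2}$. Your plan to re-derive the $(d+3)/2$ exponent via spherical harmonics plus a Sobolev interpolation is essentially re-proving those two cited results; you correctly flag this as the hard step, and indeed it is unnecessary. Your claim of a uniform-in-$(i,k)$ stability estimate is also stronger than what is available in the literature and is not what the paper uses.

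For the upper bound in (\ref{eq:intvols}) there is a genuine gap. As you yourself compute, the ball-approximation bound via (\ref{eq:BLM}) gives $U_d(n)$, which is worse than $1/n$ as soon as $d\ge 4$, so your first construction is insufficient in most dimensions. The replacement you sketch — using a spherical $t$-design and expecting the symmetric sums of wedge products in Corollary~\ref{cor:intrinsic_formula} to ``cancel against each other'' — is not carried out, and it is not clear how the wedge-product sums for a $t$-design would reduce to the binomial ratio you need. The paper instead chooses the $n$ generators uniformly at random on $\Sph^{d-1}$, applies linearity of expectation together with Lemma~\ref{lem:randompara} (the expected wedge product of $d$ random unit vectors) and the identity in Remark~\ref{rem:identity} to show
\[
\mathbb{E}\,V_d(Z) = \binom{n}{d}\,\frac{2\omega_{d+1}^{d-1}}{\omega_d^d},
\]
and since $V_1(Z)=n$ deterministically, some realization achieves $V_1(Z)/V_d(Z)^{1/d}\le (1+2d/n)\cdot V_1(\BB^d)/V_d(\BB^d)^{1/d}$; the general $(i,k)$ case then follows from the chain in (\ref{eq:AF}). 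This probabilistic step is the missing ingredient in your sketch.

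For (\ref{eq:intsurf}) your plan diverges from the paper and, as you acknowledge, stalls: the one-sided estimates in Theorems~\ref{thm:irint} and~\ref{thm:cirrint} push $V_{d-1}$ and $V_d$ in the same direction at the same order $1/n^2$, so their ratio does not obviously carry a defect without an additional stability input that you do not supply. The paper avoids this entirely by applying Lindel\"of's theorem: among convex polytopes with the same set of outer unit facet normals, the one circumscribed about a ball minimizes $V_{d-1}^{1/(d-1)}/V_d^{1/d}$; for that circumscribed polytope $Q$ one has $\surf(Q)=d\,V_d(Q)$ and hence $V_{d-1}(Q)=\tfrac{d}{2}V_d(Q)$, reducing the problem to bounding $V_d(Q)$ from below, which is done with the known asymptotic estimate for the minimal volume of a convex polytope with at most $2n$ facets circumscribed about $\BB^d$. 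You would need to adopt that route (or find a genuinely sharp Bonnesen-type input) to close this part.
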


To prove this theorem, we need some preparation. In the following, let $\omega_d$ denote the surface area of the Euclidean sphere $\Sph^{d-1}$ in $\Re^d$.

\begin{rem}\label{rem:identity}
For any $d \geq 2$, we have
\[
\frac{V_d(\BB^d)}{\left( V_1(\BB^d) \right)^d} = \frac{2 \omega_{d+1}^{d-1} }{ \omega_{d}^{d} d! }.
\]
\end{rem}

\begin{proof}
Note that $\omega_d = d \kappa_d$, and $\kappa_d = \frac{\pi^{\frac{d}{2}}}{\Gamma \left( \frac{d}{2} + 1\right)}$, implying also the identity $\kappa_{d+1}=\frac{2\pi}{d+1} \kappa_{d-1}$. Furthermore, it is well known that $V_1(\BB^d ) = \frac{d \kappa_d}{\kappa_{d-1}}$. Applying these identities, we have
\[
\frac{2 \omega_{d+1}^{d-1} }{ \omega_{d}^{d} d! } \cdot \frac{ \left( V_1(\BB^d) \right)^d}{V_d(\BB^d)} =
\frac{2 (d+1)^{d-1} \kappa_{d+1}^{d-1}}{ d! \kappa_{d-1}^d \kappa_d} =
\frac{ 2^d \pi^{d-1}}{d! \kappa_d \kappa_{d-1}} =
\frac{2^d \Gamma\left( \frac{d+2}{2} \right) \Gamma\left( \frac{d+1}{2} \right) }{d! \sqrt{\pi}} =1,
\]
where in the last step we used the Legendre duplication formula $\Gamma(z) \Gamma\left( z + \frac{1}{2} \right) = 2^{1-2z} \sqrt{\pi} \Gamma (2z)$ for $z \in \mathbb{C}$, and the identity $\Gamma(k) = (k-1)!$ for all positive integers $k$.
\end{proof}

{\zl Our next lemma is a special case of \cite[Theorem 8.2.3]{SchWeil}, and hence, we omit its proof.}

\begin{lem}\label{lem:randompara}
Let $p_1, \ldots, p_d \in \Sph^{d-1}$ be unit vectors chosen independently according to the uniform probability distribution on $\Sph^{d-1}$. Then the expected value of $|p_1 \wedge p_2 \wedge \ldots \wedge p_d|$ is $\frac{2 \omega_{d+1}^{d-1} }{ \omega_{d}^{d} }$.
\end{lem}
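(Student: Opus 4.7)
The plan is to compute $\mathbb{E}[|p_1 \wedge p_2 \wedge \ldots \wedge p_d|]$ directly, by decomposing the volume of the parallelotope spanned by $p_1, \ldots, p_d$ into a product of orthogonal ``heights''. First I would invoke the Gram--Schmidt identity
\[
|p_1 \wedge p_2 \wedge \ldots \wedge p_d| = h_1 h_2 \cdots h_d,
\]
where $h_1 = |p_1| = 1$, and for $k \geq 2$, $h_k$ is the distance from $p_k$ to the linear span $W_k = \spn(p_1, \ldots, p_{k-1})$, i.e.\ $h_k = |\proj_{W_k^{\perp}}(p_k)|$. Almost surely $\dim W_k = k-1$, so $W_k^{\perp}$ has dimension $d-k+1$.

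The key structural step is a ``derandomization'' of the conditional expectation. Since the uniform law on $\Sph^{d-1}$ is $O(d)$-invariant and $p_k$ is independent of $p_1, \ldots, p_{k-1}$, the conditional distribution of $h_k$ given $p_1, \ldots, p_{k-1}$ depends only on $\dim W_k^{\perp}$, not on $W_k$ itself. Hence $\mathbb{E}[h_k \mid p_1, \ldots, p_{k-1}]$ is almost surely a constant equal to $\mathbb{E}[h_k]$, and iterating the tower property yields
\[
\mathbb{E}[|p_1 \wedge \ldots \wedge p_d|] = \prod_{k=1}^{d} \mathbb{E}[h_k].
\]
I expect this justification, although standard, to be the main conceptual hurdle, since the $h_k$ are not independent as random variables in the usual sense; it is only the invariance of the uniform measure that collapses the conditional expectations to constants.

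Next I would evaluate each factor. For a fixed $m$-dimensional subspace $V \subseteq \Re^d$ and $u$ uniform on $\Sph^{d-1}$, choosing coordinates so that $V$ is spanned by the first $m$ coordinate axes gives $|\proj_V(u)|^2 = u_1^2 + \ldots + u_m^2 \sim \mathrm{Beta}(m/2,(d-m)/2)$. A short computation with Beta integrals produces
\[
\mathbb{E}[|\proj_V(u)|] = \frac{\Gamma\bigl(\tfrac{m+1}{2}\bigr)\, \Gamma\bigl(\tfrac{d}{2}\bigr)}{\Gamma\bigl(\tfrac{m}{2}\bigr)\, \Gamma\bigl(\tfrac{d+1}{2}\bigr)}.
\]
Setting $m = d-k+1$ and multiplying over $k = 1, \ldots, d$, the ratios $\Gamma((d-k+2)/2)/\Gamma((d-k+1)/2)$ telescope (as $k$ runs through $1, \ldots, d$, the index $d-k+1$ runs through $d, d-1, \ldots, 1$) to give $\Gamma((d+1)/2)/\Gamma(1/2) = \Gamma((d+1)/2)/\sqrt{\pi}$, while the remaining factors contribute $(\Gamma(d/2)/\Gamma((d+1)/2))^d$. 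The outcome is
\[
\mathbb{E}[|p_1 \wedge \ldots \wedge p_d|] = \frac{\Gamma(d/2)^d}{\sqrt{\pi}\, \Gamma\bigl(\tfrac{d+1}{2}\bigr)^{d-1}}.
\]

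Finally I would verify that this agrees with the stated formula. Substituting $\omega_d = 2\pi^{d/2}/\Gamma(d/2)$ and $\omega_{d+1} = 2\pi^{(d+1)/2}/\Gamma((d+1)/2)$ into $\frac{2 \omega_{d+1}^{d-1}}{\omega_d^d}$ collapses all powers of $\pi$ to $\pi^{-1/2}$ and reproduces exactly the Gamma-function expression above, completing the proof.
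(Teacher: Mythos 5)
Your proof is correct. Note, however, that the paper does not actually prove this lemma: it is stated with the remark that it is a special case of Theorem 8.2.3 of Schneider--Weil, \emph{Stochastic and Integral Geometry}, and the proof is omitted. What you have supplied is therefore a complete, self-contained substitute for that citation, and every step checks out: the base-times-height factorization $|p_1\wedge\cdots\wedge p_d|=h_1\cdots h_d$ is the standard Gram--Schmidt identity; the observation that $O(d)$-invariance of the uniform law makes $\mathbb{E}[h_k\mid p_1,\ldots,p_{k-1}]$ a.s.\ constant is exactly the right way to handle the dependence among the $h_k$, and the tower-property iteration then legitimately turns the expectation of the product into the product of expectations; the Beta$(m/2,(d-m)/2)$ law of $|\proj_V(u)|^2$ and the resulting ratio of Gamma functions are standard and correctly computed; the telescoping product $\prod_{j=1}^{d}\Gamma\bigl(\tfrac{j+1}{2}\bigr)/\Gamma\bigl(\tfrac{j}{2}\bigr)=\Gamma\bigl(\tfrac{d+1}{2}\bigr)/\sqrt{\pi}$ is right; and substituting $\omega_d=2\pi^{d/2}/\Gamma(d/2)$ does reproduce $\Gamma(d/2)^d/\bigl(\sqrt{\pi}\,\Gamma(\tfrac{d+1}{2})^{d-1}\bigr)=2\omega_{d+1}^{d-1}/\omega_d^{d}$. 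The cited result in Schneider--Weil is obtained via the Blaschke--Petkantschin formula and gives all moments of the volume of a random parallelotope with rather general spherically symmetric distributions; your argument is more elementary and buys a direct, first-moment-only computation at the cost of not generalizing as readily to higher moments. Either route is acceptable here.
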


\begin{proof}[Proof of Theorem~\ref{thm:intvols}]
First, we prove the lower bound in (\ref{eq:intvols}) for $i=1$ and $k=2$.
Let $Z \in \mathcal{Z}_{d,n}$. Since the statement is invariant under rescaling $Z$, we can assume that $V_1(Z) = V_1 (\BB^d)$ and that $Z$ is centered at the origin. Then the Steiner ball of $Z$, defined as the ball centered at the Steiner point of $Z$ and having mean width equal to that of $Z$ \cite{Schneider}, coincides with $\BB^d$. Our proof in this case is based on a stability version of the Alexandrov-Fenchel inequality, stated in (7.123) of \cite{Schneider} as a consequence of \cite[Theorem 7.6.6]{Schneider} and implying that $\frac{1}{\kappa_d} \left( W_{d-1}(Z) \right)^2 - W_{d-2}(Z) \geq \frac{d+1}{d(d-1)} \left( \delta_2 (Z, \BB^d) \right)^2$, where $W_i(K)$ denotes the $i$th quermassintegral of the convex body $K$, and $\delta_2(K,L)= \sqrt{ \int_{u \in \Sph^{d-1}} |h_K(u)-h_L(u)|^2 \, du }$, with $h_K$ and $h_L$ denoting the support functions of the convex bodies of $K$ and $L$, respectively. Applying the well known relation $W_{d-i}(Z)= \frac{\kappa_{d-i}}{\binom{d}{i}} V_i(Z)$ for $i=1,2$, and using $W_i(\BB^d) = \kappa_d$ for all values of $i$, we obtain that
\[
\frac{\left( V_1(Z) \right)^2}{\left( V_1 (\BB^d) \right)^2} - \frac{V_2(Z)}{V_2(\BB^d)} = 1 - \frac{V_2(Z)}{V_2(\BB^d)} \geq \frac{d+1}{\kappa_d d(d-1)} \left( \delta_2 (K, \BB^d) \right)^2.
\]
We note that by Lemma 7.6.5 of \cite{Schneider} (see also Lemmas 1 and 2 in \cite{Groemer}), we have
\[
\left( \delta_2 (Z, \BB^d) \right)^2 \geq c' \left( \delta (Z, \BB^d) \right)^{\frac{d+3}{2}},
\]
where $\delta(\cdot, \cdot)$ denotes the Hausdorff distance, and $c' > 0$ is a constant depending only on the dimension $d$. If for some $\varepsilon > 0$, $\delta (Z, \BB^d) \geq \varepsilon$ for all $Z \in \mathcal{Z}_{d,n}$ with Steiner ball $\BB^d$ and for all sufficiently large values of $n$, we are done. Thus, we may assume that $\delta (Z, \BB^d) \geq \frac{1}{2}$, implying that $\frac{1}{2} \leq \ir(Z)$. Observe that since the Steiner ball of $Z$ is $\BB^d$, we have $\delta (Z, \BB^d) \geq \frac{1}{2} \left( \cirr(Z) - \ir(Z) \right) \geq \frac{1}{4} \left( \frac{\cirr(Z)}{\ir(Z)} - 1 \right)$. Hence, from Theorem~\ref{thm:ircr} it follows that
\[
1 - \frac{V_2(Z)}{V_2(\BB^d)} \geq c'' \left( \delta (Z, \BB^d) \right)^{\frac{d+3}{2}} \geq \frac{c'''}{n^{\frac{(d+2)(d+3)}{4d-4}}}
\]
for some constant $c''' > 0$ depending on the dimension. After some elementary calculations, this implies the required bound.

Now we prove the lower bound in (\ref{eq:intvols}) for any value of $i$ with $k=i+1$, and observe that this clearly yields the lower bound in (\ref{eq:intvols}) for all values of $i$ and $k$. By the Alexandrov-Fenchel inequality (see e.g. \cite[Theorem 10 (ii)]{BHK}, for any $2 \leq j \leq d-1$ and any $Z \in \mathcal{Z}_{d,n}$, we have $\left( \frac{V_j(Z)}{V_j(\BB^d)} \right)^2 \geq \frac{V_{j-1}(Z)}{V_{j-1}(\BB^d)} \frac{V_{j+1}(Z)}{V_{j+1}(\BB^d)}$, implying $\frac{\left( V_j(Z)/V_j(\BB^d) \right)^{1/j}}{\left( V_{j+1}(Z)/V_{j+1}(\BB^d) \right)^{1/(j+1)}} \geq \left( \frac{\left( V_{j-1}(Z)/V_{j-1}(\BB^d) \right)^{1/(j-1)}}{\left( V_j(Z)/V_j(\BB^d) \right)^{1/j}} \right)^{\frac{j-1}{j+1}}$.
From this, it follows that
\[
\frac{\left( V_i(Z)/V_i(\BB^d) \right)^{1/i}}{\left( V_{i+1}(Z)/V_{i+1}(\BB^d) \right)^{1/(i+1)}} \geq \left( \frac{ V_1(Z)/V_1(\BB^d) }{\sqrt{ V_2(Z)/V_2(\BB^d) } } \right)^{\frac{2}{i(i+1)}}.
\]
We investigate the left-hand side. Similarly like in the previous paragraph, we may assume that $\frac{ V_1(Z)/V_1(\BB^d) }{\sqrt{ V_2(Z)/V_2(\BB^d) }} \leq 2$. Under this assumption, we obtain that
\[
\left( \frac{ V_1(Z)/V_1(\BB^d) }{\sqrt{ V_2(Z)/V_2(\BB^d) } } \right)^{\frac{2}{i(i+1)}} = \left( 1 + \frac{ V_1(Z)/\sqrt{V_2(Z)} }{V_1(\BB^d)/ \sqrt{V_2(\BB^d)} } - 1
\right)^{\frac{2}{i(i+1)}} \geq
\]
\[
\geq 1 + \frac{\sqrt{V_2(\BB^d)}}{V_1(\BB^d)} \cdot \frac{\frac{V_1(Z)}{\sqrt{V_2(Z)}}-\frac{V_1(\BB^d)}{\sqrt{V_2(\BB^d)}}}{2^{(i^2+i)/2}-1} \geq 1 + \frac{\hat{c}}{n^{\frac{(d+2)(d+3)}{4d-4}}}
\]
for some $\hat{c} > 0$ depending on $d$. From this a similar computation yields the lower bound in (\ref{eq:intvols}) for $k=i+1$.

Now we prove the inequality in (\ref{eq:intsurf}). Let $Q$ be the convex polytope with the same outer unit normals as $Z$ and circumscribed about $\BB^d$. Then, by Lindel\"of's theorem \cite{Gruber}, $\frac{\left( V_{d-1}(Z) \right)^{\frac{1}{d-1}}}{\left( V_{d}(Z) \right)^{\frac{1}{d}}} \geq \frac{\left( V_{d-1}(Q) \right)^{\frac{1}{d-1}}}{\left( V_{d}(Q) \right)^{\frac{1}{d}}}$. On the other hand, applying the volume formula of a cone for the convex hull of each facet of $Q$ with $\{ o \}$, we obtain that $\surf(Q) = d V_d(Q)$, implying $V_{d-1}(Q) = \frac{d}{2} V_d(Q)$. Thus, applying an asymptotic estimate on the minimal volume of a convex polytope with a given number of facets and circumscribed about $\BB^d$ \cite{Gruber, BHK}, we obtain that
\begin{equation}\label{eq:surfvol}
\frac{ \left( V_{d-1}(Z) \right)^{\frac{1}{d-1}} }{ \left( V_{d}(Z) \right)^{\frac{1}{d}}}  \geq
\left( \frac{d}{2} \right)^{\frac{1}{d-1}} \cdot \left( V_d(Q) \right)^{\frac{1}{d(d-1)}} \geq \left( \frac{d}{2} \right)^{\frac{1}{d-1}} \kappa_d^{\frac{1}{d(d-1)}} \cdot \left( 1+\frac{\gamma}{\left( \binom{n}{d-1} \right)^{\frac{2}{d-1}}} \right)^{\frac{1}{d(d-1)}}.
\end{equation}
for any zonotope $Z \in \mathcal{Z}_{d,n}$. Since $\left( \frac{d}{2} \right)^{\frac{1}{d-1}} \kappa_d^{\frac{1}{d(d-1)}} = \frac{\left( V_{d-1}(\BB^d) \right)^{\frac{1}{d-1}}}{\left( V_{d}(\BB^d) \right)^{\frac{1}{d}}}$, and $\binom{n}{d-1}^{\frac{2}{d-1}} \leq \frac{n^2}{((d-1)!)^{\frac{2}{d-1}}}$, this implies the bound in (\ref{eq:intsurf}).

Finally, to prove the upper bound in (\ref{eq:intvols}), first, observe that by (\ref{eq:AF}), for any $1 \leq i \leq k \leq d$, and zonotope $Z$, we have
\begin{equation}\label{eq:AF2}
\frac{\left(V_i(Z) \right)^{\frac{1}{i}} / \left( (V_i(\BB^d) \right)^{\frac{1}{i}} }{\left(V_k(Z) \right)^{\frac{1}{k}} / \left( (V_k(\BB^d) \right)^{\frac{1}{k}}} \leq \frac{V_1(Z) / V_1(\BB^d)}{\left(V_d(Z) \right)^{\frac{1}{d}} / \left(V_d(\BB^d) \right)^{\frac{1}{d}} }.
\end{equation}
We give an upper bound on $\min \left\{ \frac{V_1(Z)}{\left(V_d(Z) \right)^{\frac{1}{d}}} : Z \in \mathcal{Z}_{d,n} \right\}$.
To do it we choose $n$ unit vectors $p_i \in \Sph^{d-1}$ independently and using uniform distribution, and set $Z=\sum_{i=1}^n [o,p_i]$. Then, by the linearity of expectation and Lemma~\ref{lem:randompara}, we have that the expected value of $V_d(Z)$ is $\binom{n}{d} \cdot \frac{2 \omega_{d+1}^{d-1} }{ \omega_{d}^{d} }$.
Since $V_1(Z)=n$ for any such zonotope and by Remark~\ref{rem:identity}, there is some $Z \in \mathcal{Z}_{d,n}$ such that
\[
\frac{V_1(Z)}{\left(V_d(Z) \right)^{\frac{1}{d}}} \leq n \cdot  \left( \frac{  \omega_{d}^{d} } {2 \binom{n}{d}  \omega_{d+1}^{d-1} } \right)^{\frac{1}{d}} =
\frac{n}{\sqrt[d]{(n(n-1)\ldots (n-d+1))}} \frac{V_1(\BB^d)}{\left( V_d(\BB^d) \right)^{\frac{1}{d}}} \leq
\]
\[
\leq \left( 1 + \frac{d-1}{n-d+1} \right) \frac{V_1(\BB^d)}{\left( V_d(\BB^d) \right)^{\frac{1}{d}}} \leq \left( 1 + \frac{2d}{n} \right) \frac{V_1(\BB^d)}{\left( V_d(\BB^d) \right)^{\frac{1}{d}}},
\]
for any $n \geq 2d$.
Combining it with (\ref{eq:AF2}), we obtain that
\[
\frac{\left(V_i(Z) \right)^{\frac{1}{i}}}{\left(V_k(Z) \right)^{\frac{1}{k}}} \leq
\frac{\left( (V_i(\BB^d) \right)^{\frac{1}{i}} }{\left( (V_k(\BB^d) \right)^{\frac{1}{k}}} \cdot \left( 1 + \frac{2d}{n} \right),
\]
from which the upper bound in Theorem~\ref{thm:intvols} readily follows.
\end{proof}

\begin{rem}
We remark that if $Z_0 \in \mathcal{Z}_{d,n}$ satisfies the condition that $\ir(Z_0) = 1$ and
\[
V_i(Z_0) = \min \{ V_i(Z): Z \in \mathcal{Z}_{d,n}, \ir(Z)=1 \},
\]
then by (\ref{eq:AF}) and Theorem~\ref{thm:irint}, for any $1 \leq i < k \leq d$ we have
\[
0 \leq \left( \frac{V_i(Z)}{V_d(\BB^d)} \right)^{\frac{1}{i}} - \left( \frac{V_k(Z)}{V_k(\BB^d)} \right)^{\frac{1}{k}} \leq \left( \frac{V_i(Z)}{V_d(\BB^d)} \right)^{\frac{1}{i}} - 1 \leq CU_d(n).
\]
for some constant $C > 0$ and sufficiently large value of $n$.
\end{rem}

\section{Remarks and open problems}\label{sec:problems}

In Theorem~\ref{thm:volume}, we have shown that a regular rhombic dodecahedron has minimal circumradius among the regular dodecahedra of the same volume.

\begin{prob}
 Is it true that the circumradius of a unit volume rhombic dodecahedron $Z$ is minimal only if $Z$ is regular?
\end{prob}

\begin{prob}\label{prob:cirr}
Let $1 \leq k < d$. Find the elements $Z \in \mathcal{Z}_{d,d+1}$ with $\cirr(Z)=1$ and maximal $k$th intrinsic volume.
\end{prob}

We note that in Problem~\ref{prob:cirr}, by Theorem~\ref{thm:volume}, $Z$ is a regular rhombic dodecahedron if $k=d$, and by Theorem~\ref{thm:rh_cirr} $Z$ is not regular if $k=1$ and $d$ is odd.

\begin{prob}
Prove or disprove that for any $1 < k < d$, the elements $Z \in \mathcal{Z}_{d,d+1}$ with $w(Z)=1$ and maximal value of $V_k(Z)$ are regular rhombic dodecahedra.
\end{prob}

\begin{prob}
Find exact orders of magnitudes in the problems discussed in Section~\ref{sec:asymptotic}.
\end{prob}

\end{document}